\documentclass[10pt,reno]{amsart}
\usepackage{amssymb}
\usepackage{amsmath}
\usepackage{enumitem}
\usepackage{mathrsfs}
\usepackage[english]{babel}
\usepackage[all]{xy}
\setcounter{tocdepth}{1}
\usepackage{hyperref}

\usepackage{stmaryrd}

\usepackage[margin=1.05in]{geometry}

\RequirePackage{mathrsfs} 

\newtheorem{theorem}{Theorem}[section]
\newtheorem{lemma}[theorem]{Lemma}
\newtheorem{proposition}[theorem]{Proposition}
\newtheorem{corollary}[theorem]{Corollary}
\newtheorem{conjecture}[theorem]{Conjecture}

\theoremstyle{definition}
\newtheorem*{ack}{Acknowledgements}
\newtheorem*{con}{Conventions}
\newtheorem{remark}[theorem]{Remark}

\newtheorem{definition}[theorem]{Definition}

\numberwithin{equation}{section} \numberwithin{figure}{section}

\DeclareMathOperator{\Spec}{Spec}

\DeclareMathOperator{\an}{an}

\DeclareMathOperator{\Hom}{Hom}

\DeclareMathOperator{\ord}{ord}


\newcommand\QQ{\mathbb{Q}}

\newcommand\CC{\mathbb{C}}

\newcommand{\et}{\textrm{\'{e}t}}

\usepackage{color}

\definecolor{orange}{rgb}{1,0.5,0}

\title[Albanese maps and fundamental groups of geometrically-special varieties]{Albanese maps and fundamental groups of varieties with many rational points over function fields}

\author{Ariyan Javanpeykar}
\address{Ariyan Javanpeykar \\
	Institut f\"{u}r Mathematik\\
	Johannes Gutenberg-Universit\"{a}t Mainz\\
	Staudingerweg 9, 55099 Mainz\\
	Germany.}
\email{peykar@uni-mainz.de}

\author{Erwan Rousseau}
\address{Erwan Rousseau \\ Institut Universitaire de France
	\& Aix Marseille Univ\\
		CNRS, Centrale Marseille, I2M\\
		Marseille\\
		France}
\email{erwan.rousseau@univ-amu.fr }

\thanks{The second author was supported by the ANR project \lq\lq FOLIAGE\rq\rq{}, ANR-16-CE40-0008  and by the European Union’s Horizon 2020 research and innovation program under the Marie Sklodowska-Curie grant agreement No 75434}

\keywords{Hyperbolicity, Lang's conjectures, Campana's conjectures, special varieties,  rational points, function fields, Albanese variety, fundamental groups, period maps, Shafarevich maps, orbifold pairs}
\subjclass[2010]{32Q45, 32M15, 37F75.}
 
\begin{document}

\begin{abstract}  
We investigate properties of the Albanese map and the fundamental group of a complex projective variety with  many rational points over some function field, and prove that every linear quotient of the fundamental group of such a variety  is virtually abelian, as well as  that its Albanese map   is surjective, has connected fibres, and has no      multiple fibres in codimension one. 
\end{abstract}

\maketitle

\section{Introduction}

The abundance of rational points on a variety over a number field is conjectured to force the   fundamental group of the underlying topological space to exhibit various rigidity properties. Indeed, it seems reasonable to suspect that the topological fundamental group of   a smooth projective variety  over a number  field with a dense set of rational points contains a finite index abelian subgroup.  In this paper we verify   such expectations for varieties with a dense set of rational points over a function field of characteristic zero.
 
  \subsection{Lang's conjectures}
Let $k$ be an algebraically closed field of characteristic zero.  
  Lang conjectured that varieties of general type over $k$ satisfy certain finiteness properties (see \cite{Lang2} and \cite[\S12]{JBook}), where  we say that a proper (integral) variety $X$ over $k$ is of general type if for some (hence any) proper desingularization $\widetilde{X}\to X$ we have that $\omega_{\widetilde{X}}$ is big.  To state the relevant version of Lang's conjecture, we follow \cite{vBJK, JBook} and introduce the class of pseudo-geometrically hyperbolic varieties.

\begin{definition}[Pseudo-geometric hyperbolicity]\label{def:psgeomhyp}  A variety  $X$ over $k$ is   \emph{pseudo-geometrically hyperbolic over $k$} if there is a proper closed subset $Z\subsetneq X$ such that,   for every $x$ in $X(k)\setminus Z$ and every smooth connected pointed curve $(C,c)$ over $k$,  the set $\Hom_k((C,c),(X,x))$ of morphisms $f:(C,c)\to (X,x)$  is finite.  
\end{definition}

 We will also say that a variety $X $ over $k$ is \emph{geometrically hyperbolic over $k$} if one can take $Z=\emptyset$ in the above definition i.e., for every $x$ in $X(k)$ and every smooth connected pointed curve $(C,c)$ over $k$, the set $\Hom_k((C,c),(X,x))$ is finite. Note that ``being geometrically hyperbolic over $k$'' means, roughly speaking, that, for any smooth connected curve $C$ with function field $K$,   the variety $X_{k(t)}$ over the function field $k(t)$ has only finitely many  ``pointed'' $K$-rational points. Similarly, ``being pseudo-geometrically hyperbolic'' is similar to having such a finiteness property for ``pointed'' rational points \emph{outside} some   exceptional locus $Z$.

 The starting point of our paper is the following finiteness conjecture for varieties.  
 
\begin{conjecture}[Consequence of Lang's conjectures]\label{conj:lang}
A projective (integral) variety $X$ over $k$ is of general type if and only if it is pseudo-geometrically hyperbolic over $k$.
\end{conjecture}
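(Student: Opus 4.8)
Since Conjecture \ref{conj:lang} is, as its name indicates, expected to follow from Lang's conjectures rather than to be provable unconditionally, a proof sketch can only take the form of a reduction; I will treat the two implications separately and isolate in each the genuinely conjectural input from the unconditional core.

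For ``general type $\Rightarrow$ pseudo-geometrically hyperbolic'' the plan is to take for the exceptional locus $Z$ the Lang (special) locus $\mathrm{Sp}(X)$, which Lang's conjecture predicts to be a \emph{proper} closed subset when $X$ is of general type, and which by construction contains the image of every non-constant morphism from an abelian variety as well as every rational curve in $X$. As a smooth projective curve of geometric genus $\leq 1$ over $k$ is either $\PP^{1}$ or an elliptic curve, any curve $C'\subseteq X$ with $C'\not\subseteq\mathrm{Sp}(X)$ has normalisation $\widetilde{C'}$ of genus $\geq 2$. Given $x\in X(k)\setminus Z$ and a smooth connected pointed curve $(C,c)$, every non-constant $f\in\Hom_{k}((C,c),(X,x))$ factors as $C\to\widetilde{C'}\to C'\hookrightarrow X$ with $g(\widetilde{C'})\geq 2$; the de Franchis--Severi rigidity theorem then shows that a fixed $C$ dominates only finitely many curves of genus $\geq 2$ up to isomorphism and admits only finitely many non-constant maps to each, and combined with a Bogomolov-type boundedness statement for the curves of bounded genus on $X$ through $x$ this bounds $\Hom_{k}((C,c),(X,x))$, the constant maps contributing one further element.

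For the converse I would argue by contraposition, using the conjectural birational structure of varieties $X$ with $\kappa(X)<\dim X$. If $\kappa(X)=-\infty$ then, granting the minimal model program and abundance, $X$ is uniruled; for any proper closed $Z\subsetneq X$ one picks $x\in X(k)\setminus Z$ on a rational curve, reparametrises a non-constant morphism $\PP^{1}\to X$ so that $0\mapsto x$, and composes with the positive-dimensional stabiliser of $0$ in $\Aut(\PP^{1})$ to produce infinitely many elements of $\Hom_{k}((\PP^{1},0),(X,x))$, so $X$ is not pseudo-geometrically hyperbolic. If $0\leq\kappa(X)<\dim X$ one passes to the Iitaka fibration, whose general fibre $F$ has $\kappa(F)=0$; the abundance and structure conjectures predict that such an $F$ is again not pseudo-geometrically hyperbolic --- for instance $F$ is uniruled, or, after a finite \'{e}tale cover, dominates a positive-dimensional abelian variety $A$, in which case composing a non-constant map $E\to A$ from an elliptic curve with multiplication-by-$n$ on $A$ yields an infinite pointed $\Hom$-set --- and this failure propagates to the general point of $X$. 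Either way Definition \ref{def:psgeomhyp} fails, as required.

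The main obstacle, and the reason the statement is recorded as a conjecture, is exactly these conjectural inputs: the properness of the Lang locus of a variety of general type (and, more precisely, the Bogomolov-type boundedness used above) in the first implication, and the minimal-model/abundance and Campana-type structure theory of varieties with $\kappa<\dim$ in the second. The remaining ingredients --- de Franchis--Severi rigidity, the automorphism trick on $\PP^{1}$, and the multiplication-by-$n$ trick on abelian varieties --- are unconditional, so that proving Conjecture \ref{conj:lang} is essentially equivalent to proving Lang's conjectures together with the standard structure conjectures; this is why it serves here as motivation for the unconditional results that follow rather than as a theorem we establish.
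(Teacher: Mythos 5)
The statement you are proving is recorded in the paper as a \emph{conjecture}: the paper gives no proof, only a paragraph explaining the reduction to Lang's conjectures, namely that Lang predicts a projective variety of general type is pseudo-\emph{algebraically} hyperbolic, that Proposition \ref{prop:alg_hyp_implies_geom_hyp} (Mori's bend-and-break) then yields pseudo-geometric hyperbolicity unconditionally, and that the converse reduces to pseudo-grouplessness together with abundance and standard conjectures on rational curves on Calabi--Yau varieties. Your proposal is in the same conditional spirit, but the unconditional bridges you substitute do not close. In the forward direction, de Franchis--Severi only bounds the pairs $(D, C\to D)$ with $D$ of genus $\geq 2$; it says nothing about the a priori infinite (possibly positive-dimensional) family of birational morphisms $D\to X$ onto image curves through $x$ not contained in $\mathrm{Sp}(X)$, which is exactly the finiteness at stake. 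Properness of $\mathrm{Sp}(X)$ does not supply it, and the ``Bogomolov-type boundedness'' you invoke is a further conjectural input which, even granted, only makes the pointed Hom scheme of finite type: to pass from boundedness to finiteness one must rule out positive-dimensional families of pointed maps, and that step is Mori's bend-and-break producing a rational curve through $x\notin\mathrm{Sp}(X)$ --- precisely the mechanism the paper isolates in Proposition \ref{prop:alg_hyp_implies_geom_hyp}, applied after the stronger (and standard) quantitative form of Lang's conjecture, pseudo-algebraic hyperbolicity. Your sketch never invokes it in this implication.

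In the converse, the $\kappa(F)=0$ case is argued in the wrong direction: if a finite \'etale cover of the Iitaka fiber $F$ merely \emph{dominates} a positive-dimensional abelian variety $A$, the infinitely many pointed maps $E\to A$ obtained by composing with multiplication by $n$ do not lift along $F\to A$, so they produce no maps into $F$ or $X$ and hence no contradiction with pseudo-geometric hyperbolicity of $X$ (a surface of general type fibred over an elliptic curve dominates an abelian variety while being conjecturally pseudo-geometrically hyperbolic). What one needs is a supply of curves \emph{into} $X$ through a general point, which conjecturally comes from a Beauville--Bogomolov-type decomposition of the $\kappa=0$ fibers (abelian or uniruled factors mapping into $X$) or from the conjectural existence of rational curves on Calabi--Yau factors --- the ``standard conjectures on rational curves on Calabi--Yau varieties'' the paper's discussion points to. Your uniruled case and the propagation of the failure from a general fiber to $X$ are fine, but as written the abelian step would fail.
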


For the reader's convenience we briefly explain how Conjecture \ref{conj:lang} is related to Lang's conjectures, as this might be unclear to non-experts. First, Lang's conjectures   predict that     a projective   variety of general type is pseudo-algebraically hyperbolic (Definition \ref{def:alghyp2}), and a simple application of Mori's bend-and-break shows that a pseudo-algebraically hyperbolic scheme is pseudo-geometrically hyperbolic (see Proposition \ref{prop:alg_hyp_implies_geom_hyp}).
On the other hand, a pseudo-algebraically hyperbolic variety is pseudo-groupless \cite[Proposition~5.4]{JXie}, and Lang conjectured that pseudo-groupless projective varieties    are of general type. In fact, the latter conjecture also follows from combining the abundance conjecture in the MMP  with ``standard'' conjectures on rational curves on Calabi-Yau varieties. For example, note    that groupless \emph{surfaces} are of general type   (see  \cite[Lemma~3.23]{JXie}).
 
The above consequence of Lang's conjecture (which we simply refer to as Lang's conjecture) is known in dimension one, but open in dimension two. We do note that   every proper pseudo-geometrically hyperbolic surface is of general type (see \cite[Proposition~11.7]{JBook} for example);  such a statement is not known currently in dimension  three. Moreover, as we show in Section \ref{section:connectedness}, Lang's conjecture holds for normal projective varieties  whose Albanese map is generically finite onto its image. Finally, for other non-trivial examples for which we know Lang's conjecture to hold we refer the reader to Section \ref{section:psgeomhyp}.
   
 Lang's conjecture is concerned with the \emph{finiteness} of rational points on varieties over function fields. We will however be interested in varieties with a lot of rational points over function fields, i.e., those that behave ``opposite'' to pseudo-geometrically hyperbolic varieties. This brings us to Campana's conjectures on special varieties.
 
 \subsection{Campana's conjectures}
Campana's conjectures provide a picture    complementary to that of Lang's conjectures on rational points and entire curves on varieties of general type.  To explain this,  for $X$  a   proper   variety over $k$,   we follow Campana and say that $X$ is \emph{special (over $k$)}  if, for some (hence every) desingularization  $\widetilde{X}\to X$, for every integer $p\geq 1$ and every rank one coherent subsheaf $F\subset \Lambda^p \Omega^1_{\widetilde{X}}$,   the Kodaira dimension of $F$ is at most $p-1$; see   \cite{Ca04, Ca11}. Roughly speaking, a variety is special if it is as far away as possible from being of general type. Just like being of general type, this notion is insensitive to field extensions, i.e., if $L/k$ is an extension of algebraically closed fields of characteristic zero and $X$ is a proper variety over $k$, then $X$ is special over $k$ if and only if $X_L$ is special over $L$ (see \cite[Proposition~9.18]{Ca04}).

Before we state the version of Campana's conjecture   we are interested in, we recall his analytic and arithmetic conjectures. 

\begin{definition}
A variety $X$ over $\mathbb{C}$ is \emph{Brody-special} if there is a  (Zariski-)dense holomorphic map $\mathbb{C}\to X^{\an}$.
\end{definition}

\begin{conjecture}[Campana's analytic conjecture]\label{conj:campana_an}
A   projective   variety $X$ over $\mathbb{C}$ is special   if and only if it is Brody-special.
\end{conjecture}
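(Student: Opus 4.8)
The plan is to follow Campana's program and reduce the equivalence to the behaviour of the \emph{core map} $c_X\colon X\to C(X)$ --- the essentially unique fibration whose general fibre is special and whose orbifold base $(C(X),\Delta_{c_X})$ is of general type --- since by construction $X$ is special if and only if $C(X)$ is a point. Proving Conjecture~\ref{conj:campana_an} then amounts to two implications: a Brody-special variety has trivial core, and a variety with trivial core is Brody-special.

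For the first implication, assume $X$ is not special, so $\dim C(X)>0$. Composing a Zariski-dense holomorphic map $\CC\to X^{\an}$ with $c_X$ produces a holomorphic map $g\colon\CC\to C(X)^{\an}$ with Zariski-dense image; one must check that $g$ is an \emph{orbifold} entire curve for $(C(X),\Delta_{c_X})$, i.e.\ that it meets each component of $\Delta_{c_X}$ with at least the prescribed multiplicity, which one arranges by a general-position (Baire category) argument over the space of entire curves in $X$. A Zariski-dense orbifold entire curve in an orbifold pair of general type is excluded by the orbifold Green--Griffiths--Lang conjecture, so this direction is conditional on that statement; unconditionally it can be run when $C(X)$ has dimension one, where orbifold curves of general type admit no non-constant orbifold entire curve by the classical second main theorem, and in some surface cases via Nevanlinna theory.

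For the converse one invokes the (conjectural) structural description of special varieties: $X$ should admit a finite tower of orbifold fibrations $X=X_m\dashrightarrow X_{m-1}\dashrightarrow\cdots\dashrightarrow X_0=\mathrm{pt}$ alternating the orbifold Iitaka fibration and the maximal rationally connected fibration, so that the general orbifold fibre of each step is either rationally connected or of orbifold Kodaira dimension zero. One then constructs a Zariski-dense entire curve by induction on $m$: over a rationally connected fibre density is propagated by deforming and gluing rational curves through a very general point; over a fibre of orbifold Kodaira dimension zero one uses that --- conjecturally --- such a fibre is dominated by an (orbifold) abelian variety, where a generic one-parameter subgroup is dense, or is of orbifold Calabi--Yau or K3 type, where one appeals to the known constructions of dense entire curves; and one lifts the entire curve through each fibration step using Campana's orbifold additivity results for entire curves.

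The hard part is twofold. The converse direction is not just bookkeeping: the decomposition of a special variety into rationally connected and orbifold-$\kappa=0$ pieces is itself equivalent to a consequence of the abundance conjecture, and producing a genuinely Zariski-dense --- rather than merely analytically non-degenerate --- entire curve on an orbifold Calabi--Yau or K3 fibre is open in general. And the forward direction is essentially the orbifold Green--Griffiths--Lang conjecture on the general-type base of the core, one of the central open problems in the field. Hence a complete proof of Conjecture~\ref{conj:campana_an} is beyond current techniques; what is realistically achievable is its verification in low dimension and in fibred situations where both the structure theory and the relevant hyperbolicity and entire-curve inputs are available.
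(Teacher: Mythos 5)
The statement you were asked about is not a theorem of the paper: Conjecture~\ref{conj:campana_an} is stated there precisely as an open conjecture (it motivates the paper's results on geometrically-special varieties), and the paper offers no proof of it in either direction. So there is no paper proof to compare your attempt against, and your own text correctly acknowledges that what you have written is a program, not a proof.

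As a description of that program your sketch is accurate and fairly identifies where the genuine obstructions lie: reducing the equivalence to the core map $c_X\colon X\to C(X)$, the need for the orbifold Green--Griffiths--Lang statement on the general-type orbifold base to rule out a Zariski-dense entire curve when $X$ is not special (including the nontrivial point that the composed curve must be shown to be an \emph{orbifold} entire curve on $(C(X),\Delta_{c_X})$), and, for the converse, the conjectural decomposition of special varieties into rationally connected and orbifold-$\kappa=0$ pieces plus the open problem of producing dense entire curves on the $\kappa=0$ pieces and lifting them through the tower. This matches how the conjecture is viewed in the literature the paper builds on; within the paper itself the only unconditional evidence in this direction consists of special cases (e.g.\ abelian varieties and closed subvarieties of abelian varieties, Theorem~\ref{thm:ab_var}, and Yamanoi's Theorem~\ref{thm:pi1_yamanoi} on fundamental groups of Brody-special varieties). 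The one caution is that your text should not be spliced in as a ``proof'': none of its conditional steps can be discharged with current techniques, exactly as you say, so the correct conclusion is that the statement remains open and your write-up is an outline of Campana's strategy rather than an argument establishing the conjecture.
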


A non-constant holomorphic map $\mathbb{C}\to X^{\an}$ is usually referred to as an entire curve in $X$, and a variety $X$ over $\mathbb{C}$ is said to be \emph{pseudo-Brody hyperbolic} if there is a proper (Zariski-)closed subset $Z\subsetneq X$ such that every entire curve $\mathbb{C}\to X^{\an}$ factors over $Z^{\an}$.
Just as   positive-dimensional special varieties are as far away as possible from being of general type,   a positive-dimensional Brody-special variety is as far away as possible from being pseudo-Brody hyperbolic.   Therefore,    Campana's analytic conjecture provides a complementary picture to the Green--Griffiths--Lang conjecture that a proper   variety is of general type if and only if it is pseudo-Brody hyperbolic.  (To be precise, Green--Griffiths and Lang explicitly   conjectured that a projective variety of general type is pseudo-Brody hyperbolic. The converse statement is implicit in Lang's paper; see \cite[p.~161]{Lang2}.)

 Intuitively speaking, the existence of a  dense entire curve  on a positive-dimensional variety over a number field should be equivalent to the existence of many rational points. To make this   more precise,  we extend Campana's notion of arithmetically-special variety to varieties over arbitrary algebraically closed fields of characteristic zero.
 
 \begin{definition}  A proper variety $X$ over   $k$  is \emph{arithmetically-special over $k$} if there exists a  subfield $K\subset k$ which is a finitely generated field extension of $\mathbb{Q}$ and a model $\mathcal{X}$ for $X$ over $K$ such that  $\mathcal{X}(K)$ is dense in $X$.
 \end{definition}
 
 The notion of ``arithmetic-specialness'' introduced here is slightly more general than that of Campana who restricts himself to varieties over $\overline{\mathbb{Q}}$. (Note that a variety over $\overline{\mathbb{Q}}$ is arithmetically-special  in the above sense if and only if there is a number field $K\subset \overline{\mathbb{Q}}$ and a model $\mathcal{X}$ for $X$ over $K$ such that $\mathcal{X}(K)$ is dense.)

The arithmetic analogue of Conjecture \ref{conj:campana_an}   then reads as follows.

\begin{conjecture}[Campana's arithmetic conjecture]\label{conj:campana_ar} A  projective   variety $X$  over $k$  is special over $k$ if and only if it is arithmetically-special over $k$. 
\end{conjecture}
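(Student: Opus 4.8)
The plan is to prove the asserted equivalence by establishing its two implications separately; since this is a conjecture rather than a theorem, what follows is really a program, and both halves will turn out to be conditional on deep open statements. Both halves are organized around Campana's \emph{core map} $c_X\colon X \dashrightarrow C(X)$, whose orbifold base $(C(X),\Delta_{c_X})$ is of general type and which reduces to a point precisely when $X$ is special. The implication ``arithmetically-special $\Rightarrow$ special'' will be a contrapositive argument resting on (the orbifold form of) Lang's conjecture, while ``special $\Rightarrow$ arithmetically-special'' will be a propagation-of-potential-density argument along the tower of orbifold fibrations supplied by Campana's structure theory.

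For the first direction, suppose $X$ is not special over $k$. Then $C(X)$ is positive-dimensional and $(C(X),\Delta_{c_X})$ is of general type. Spreading out, one chooses a finitely generated field $K\subset k$ over which $X$, the core map, and the orbifold base are all defined. If $X$ were arithmetically-special, then a dense set of $K'$-points (for some finite $K'/K$) would, after composing with $c_X$ and using a Hilbert-irreducibility type argument to control the multiple fibres of the core map, force the rational points of $(C(X),\Delta_{c_X})$ to be dense, contradicting the prediction that a positive-dimensional orbifold of general type over a function field (equivalently, after spreading out, over a number field) has non-dense rational points. The genuine difficulty is that this orbifold Lang statement is itself open in general; this direction is therefore unconditional only in the cases where it is known --- when $C(X)$ is a curve, or where $C(X)$ can be analyzed through Albanese maps as in the present paper --- and is otherwise conditional on Lang's conjecture.

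For the converse, assume $X$ is special. By Campana's decomposition theorem a special variety is built, up to finite \'etale cover and birational modification, as an iterated tower of fibrations whose general orbifold fibres are either rationally connected or have numerically trivial orbifold canonical class (and, in the latter case, after a further finite cover, are abelian-fibred or of Calabi--Yau type). The strategy is to propagate potential density up this tower: potential density is known for rationally connected varieties and for abelian varieties and their torsors, which supply the base cases, and at each stage one would deduce potential density of the total space from that of the base. The principal obstacle --- and the reason the conjecture remains open --- lies exactly here: over a dense set of rational points of the base one must produce enough rational points in the orbifold fibres, respecting the multiple-fibre structure, and then descend the entire configuration to a common finitely generated extension of $\mathbb{Q}$. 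This is precisely the content of Campana's conjectures on rational points of orbifold pairs, and it is presently accessible only in special cases (isotrivial towers, fibrations over rational or abelian bases admitting sections, and the like); the results of the present paper should be read as partial structural instances of the constraints that such a propagation would entail.
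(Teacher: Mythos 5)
The statement you were asked about is Conjecture \ref{conj:campana_ar}, and the paper does not prove it: it is stated as an open conjecture (the authors explicitly say that, except for special cases such as closed subvarieties of abelian varieties, both Lang's and Campana's conjectures are wide open), and the paper only proves adjacent evidence in the geometric setting (the Albanese structure theorem and the virtual abelianity of linear quotients of $\pi_1$). So there is no proof in the paper against which your argument can be matched, and your text, as you yourself say, is a program rather than a proof.

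Concretely, neither implication is closed in your proposal. For ``arithmetically-special $\Rightarrow$ special'' you reduce to non-density of rational points on the positive-dimensional general-type orbifold base $(C(X),\Delta_{c_X})$ of the core map; that is the orbifold Mordell--Lang conjecture, which is at least as hard as the statement you are trying to prove and is known essentially only when the base is a curve or accessible via subvarieties of abelian varieties (Faltings). Moreover, the step where a dense set of $K'$-points of $X$ is claimed to yield a dense set of \emph{orbifold} rational points of the base is itself not a formal consequence: the image of a rational point of $X$ need not satisfy the divisibility conditions imposed by $\Delta_{c_X}$ at places of bad reduction, and controlling this (your ``Hilbert-irreducibility type argument'') is precisely one of the unresolved points in Campana's arithmetic theory. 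For ``special $\Rightarrow$ arithmetically-special'' the propagation of potential density up the tower of fibrations with rationally connected or $\kappa=0$ orbifold fibres is again exactly Campana's open conjecture: potential density is not even known for all smooth Fano varieties (as the paper notes), nor for general $K3$ surfaces or Calabi--Yau fibres, so the base cases of your induction already fail to be theorems. In short, your write-up is a reasonable survey of the expected strategy, but it does not constitute a proof, and it should not be presented as one; the honest conclusion is the same as the paper's, namely that the conjecture remains open.
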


A proper variety $X$ over $k$ is  \emph{pseudo-Mordellic over $k$} if  there is a proper closed subset $Z\subsetneq k$ such that,  for every   subfield $K\subset k$ which is a finitely generated field extension of  $\mathbb{Q}$ and every model $\mathcal{X}$ for $X$ over $K$, the set $\mathcal{X}(K)\setminus Z$ is finite.  Just as   positive-dimensional special varieties are as far away as possible from being of general type,  a positive-dimensional   arithmetically-special variety is as far away as possible  from being pseudo-Mordellic.  Therefore, Campana's conjecture complements Lang's conjecture that a projective   variety $X$ over $k$ is of general type if and only if $X$ is pseudo-Mordellic over $k$.

Inspired by Campana's analytic and arithmetic conjectures, and led by the analogy between function fields and number fields (see for instance \cite[Remark~11.3]{JBook}), we introduce a ``special'' counterpart to the notion of pseudo-geometric hyperbolicity (Definition \ref{def:psgeomhyp}).

\begin{definition} [Geometrically special varieties] \label{def:geom} A variety $X$ over $k$ is \emph{geometrically-special over $k$} if, for every dense open subset $U\subset X$, there exists a smooth quasi-projective connected curve $C$ over $k$, a point $c$ in $C(k)$, a point $u$ in $U(k)$, and a sequence of  morphisms $f_i:C\to X$   with $f_i(c) = u$ for $i=1,2,\ldots$ such that $C\times X $ is covered by the graphs $\Gamma_{f_i}\subset C\times X$ of these maps, i.e., the closure of $\cup_{i=1}^{\infty} \Gamma_{f_i}$ equals $ C\times X$. 
\end{definition}

Note that the density of $\cup_{i=1}^\infty \Gamma_{f_i}$ in this definition is equivalent to  the density of the $K(C)$-rational points of $X_{K(C)}$ induced by     morphisms   $C\to C\times X$ with $f(c) = (c,x)$. That is, roughly speaking, a variety is geometrically-special if and only if the set of ``pointed'' rational points of $X_{k(t)}$ is potentially dense.  Thus, intuitively speaking, to be geometrically-special is to admit an abundant set of rational points over some function field.

 Just as positive-dimensional special varieties are   as far away from being of general type, a positive-dimensional geometrically-special is as far away as possible from being pseudo-geometrically hyperbolic. For example,  a  geometrically-special variety does not dominate a positive-dimensional pseudo-geometrically hyperbolic variety (see Corollary \ref{cor:image_is_not_psgeom}).
 
Following Campana's philosophy on special varieties, we expect that  geometric-specialness coincides with being special.

\begin{conjecture}[Inspired by Campana]\label{conj:campana_geom} A   projective    variety $X$  over $k$  is special over $k$ if and only if it is geometrically-special over $k$. 
\end{conjecture}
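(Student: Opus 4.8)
Since \ref{conj:campana_geom} is a conjecture, the following is a strategy rather than a complete argument: I would prove the two implications separately, reducing each to verified cases of Lang's and Campana's conjectures together with the structure theory of special varieties, and I would isolate precisely where current technology is insufficient (which is, ultimately, why the statement is only a conjecture).

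For the implication ``special $\Rightarrow$ geometrically-special'', I would argue by induction along Campana's core fibration. By the structure theory of special varieties, a special $X$ admits, after desingularization, a tower of orbifold fibrations whose orbifold fibres either have vanishing orbifold Kodaira dimension --- hence admit finite orbifold-\'etale covers birational to abelian varieties, or more generally to products of an abelian variety with simply connected Calabi--Yau or hyperk\"ahler factors --- or are orbifold-rationally-connected. Abelian varieties and rationally connected varieties are geometrically-special over $k$: for an abelian variety one exhausts $C\times A$ by graphs of maps $C\to A$ through a fixed point using the divisibility of $A(k(C))$, and for a rationally connected variety one uses very free curves. The inductive step is then a fibration lemma asserting that a fibration with geometrically-special general orbifold fibre over a geometrically-special orbifold base is geometrically-special; one lifts, over each of the countably many curves exhausting the base, enough curves into $X$ --- through prescribed points and, crucially, with prescribed tangency along the orbifold divisor --- to exhaust $X$. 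Producing curves with the required orbifold tangency is the delicate point and is where an effective orbifold Mordell statement over function fields enters.

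For the converse ``geometrically-special $\Rightarrow$ special'', I would suppose $X$ is not special. Then Campana's core map $c_X\colon X\dashrightarrow (Y,\Delta)$ is a fibration onto a positive-dimensional orbifold of general type, so after resolving, $X$ dominates a positive-dimensional base orbifold of general type. I would then invoke the principle that a geometrically-special variety cannot dominate a positive-dimensional orbifold of general type: in the non-orbifold case this follows from Corollary \ref{cor:image_is_not_psgeom} together with the verified instances of Lang's conjecture \ref{conj:lang} (curves, subvarieties of abelian varieties, varieties with big cotangent bundle, and the cases of surfaces where the needed input is known), and in the orbifold case it would follow from the orbifold analogue of \ref{conj:lang} over function fields --- known for orbifold curves and under positivity of the orbifold cotangent sheaf. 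Composing the exhausting family $C\to X$ with $c_X$ would then produce a non-finite orbifold family into a pseudo-hyperbolic orbifold, a contradiction; hence $X$ is special.

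The main obstacle --- and the reason \ref{conj:campana_geom} remains open --- is that both directions reduce to a Vojta/Faltings-type finiteness theorem for orbifold pairs of general type over function fields of characteristic zero, which is currently available only for orbifold curves, for orbifolds dominating abelian varieties, and under cotangent positivity, while Campana's higher-dimensional structure theory for special varieties is itself conditional on the MMP and on conjectures about rational curves on Calabi--Yau varieties. What is realistically attainable, and what I would instead carry out in detail, is to establish the common structural consequences forced on both sides of the equivalence: transporting to the function-field setting the arguments known for arithmetically-special and Brody-special varieties --- Ueno's theorem and the universal property of the Albanese for the Albanese statements, and non-abelian Hodge theory together with Shafarevich and period maps for the fundamental group --- one shows that a geometrically-special variety has surjective Albanese map with connected fibres and no multiple fibres in codimension one, and that every linear quotient of its fundamental group is virtually abelian.
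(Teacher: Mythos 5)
The statement you were asked about is a conjecture, and the paper offers no proof of it: Conjecture \ref{conj:campana_geom} is stated as the guiding expectation, and the body of the paper only proves structural \emph{consequences} that a proof would have to be compatible with. Your proposal correctly treats it as open, and your closing paragraph --- surjectivity and connectedness of the Albanese map, no multiple fibres in codimension one, and virtual abelianity of linear quotients of $\pi_1$ --- is exactly the evidence the paper actually establishes (Theorems \ref{thm:main_result_I}, \ref{thm:pi1_geomhyp}, \ref{thm:pi1_geomhyp2}). Your diagnosis of the obstruction (absence of a general orbifold Mordell/Lang-type finiteness statement over function fields, and the conditional nature of Campana's higher-dimensional structure theory) is also consistent with the paper's own discussion of why both Lang's and Campana's conjectures remain open outside special cases such as subvarieties of abelian varieties.

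One calibration worth making: you suggest the attainable results follow by ``transporting the arguments known for arithmetically-special and Brody-special varieties.'' For arithmetically-special varieties the paper points out that only \emph{surjectivity} of the Albanese map is known (via Faltings), and connectedness is out of reach; the function-field proofs in the paper instead rest on Yamanoi's Nevanlinna-theoretic results --- geometric hyperbolicity of subvarieties of abelian varieties modulo the Ueno locus (Theorem \ref{thm:V_is_mod}), pseudo-algebraic hyperbolicity of general-type varieties of maximal Albanese dimension (Theorem \ref{thm:yamanoi}), and his estimates for big orbifold divisors on abelian varieties --- combined with Kawamata's fibration theorem and Mori's bend-and-break; the fundamental group results additionally use Zuo's spectral covers, Shafarevich maps, and the Bakker--Brunebarbe--Tsimerman algebraicity theorem for period maps, not merely a formal analogue of the Brody-special argument. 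Your sketch of ``special $\Rightarrow$ geometrically-special'' via the core fibration and an orbifold fibration lemma is plausible as a program but is not addressed in the paper at all, and the tangency-lifting step you flag is indeed where it would break down with current tools.
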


The aim of this paper is to provide evidence for Conjecture \ref{conj:campana_geom}. We stress that this conjecture fits in well with the parellelism between Campana's and Lang's conjecture, as illustrated by the following table.

\begin{center}
\begin{tabular}{ |p{6cm}|p{6cm}|  }
\hline
\multicolumn{2}{|c|}{Lang and Campana conjectures} \\
\hline
Notion of pseudo-hyperbolicity      & Notion of specialness   \\
\hline
\\[-1em]
  General type &  Special\\
 Pseudo-Brody-hyperbolic  &     Brody-special \\
Pseudo-Mordellic &   Arithmetically-special \\ 
Pseudo-geometrically-hyperbolic  & Geometrically-special.  \\
\hline
\end{tabular}
 \end{center}

 Lang's conjecture says that the four notions on the left hand side are   equivalent for any projective variety $X$ over $k$, whereas Campana's conjecture and Conjecture \ref{conj:campana_geom} predict that the four notions on the right hand side are equivalent. 
 Except for some particular cases (e.g,. $X$ a closed subvariety of an abelian variety) both conjectures are still wide open.

\subsection{The Albanese map of a special variety} 
 We will use the following terminology.
  \begin{definition}\label{defn:mult}
A surjective morphism $f:X\to Y$ of   varieties over $k$ is said to have \emph{no multiple fibres in codimension one} if, for every point $y$ in $Y$ of codimension one, the scheme-theoretic fibre $X_y$ has an irreducible component which is generically reduced.       More concretely, if $f:X\to Y$ has no multiple fibres in codimension one and  we write $X_y = \sum_F n_F F$, where the sum runs over all irreducible components of $X_y$ and $n_F$ is the multiplicity of $F$ in $X_y$ (see \cite[Tag~02QU]{stacks-project}), then  there is a component $F$ of $X_y$ such that $n_F=1$. 
\end{definition}

Our first main result concerns the structure of the Albanese map of a geometrically-special normal projective variety. We recall that, given a normal proper (integral) variety over $k$, the Albanese variety of $X$ is $\mathrm{Alb}(X):=(\mathrm{Pic}^0_{X/k})_{\mathrm{red}}$, and  we refer the reader to \cite[Proposition~A.6]{Mochizu} for basic properties of the Albanese map. (Strictly speaking, the Albanese map $X\to \mathrm{Alb}(X)$ is associated to the choice of a point $x$ in $X(k)$. However, the properties we are interested in are independent of the choice of $x$ in $X(k)$.)
 
 \begin{theorem}[Main Result, I]\label{thm:main_result_I}
   If $X$ is  a normal projective \emph{geometrically-special} variety over $k$, then the Albanese map $X\to \mathrm{Alb}(X)$ is surjective, has connected fibres, and has no multiple fibers in codimension one.
 \end{theorem}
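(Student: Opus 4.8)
The plan is to reduce all three assertions to statements about the abelian variety $A:=\Alb(X)$ and its finite covers. The basic tool is the principle — immediate from the definition, since a morphism from a smooth curve to a projective variety extends and proper pushforward commutes with Zariski closure — that the image of a geometrically-special variety under a dominant rational map to a projective variety is again geometrically-special. I would repeatedly invoke two inputs: first, the case of Lang's conjecture recalled in the introduction, that a positive-dimensional variety of general type whose Albanese map is generically finite onto its image is pseudo-geometrically hyperbolic; and second, Corollary~\ref{cor:image_is_not_psgeom}, that a geometrically-special variety does not dominate a positive-dimensional pseudo-geometrically hyperbolic variety. (The case $\dim A=0$ is trivial, so assume $\dim A\geq 1$.)

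\emph{Surjectivity.} Write $\mathrm{alb}\colon X\to A$ for the Albanese map, normalized so that the chosen point $x_0$ maps to $0$, and let $Z:=\overline{\mathrm{alb}(X)}\subseteq A$ be its image, a geometrically-special closed subvariety of $A$. By Ueno's theorem there is an abelian subvariety $B\subseteq A$ with $Z+B=Z$ such that, if $\dim(Z/B)\geq 1$, then $Z/B\subseteq A/B$ is of general type. In the latter case $Z/B$ is a positive-dimensional geometrically-special variety (a dominant image of $Z$) which, being a subvariety of an abelian variety, has generically finite Albanese map, hence is pseudo-geometrically hyperbolic — contradicting Corollary~\ref{cor:image_is_not_psgeom}. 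So $\dim(Z/B)=0$: the variety $Z$ is a translate of $B$, and since $0\in Z$ we get $Z=B$; as $\mathrm{alb}(X)$ generates $A$, this forces $Z=A$.

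\emph{The key reduction.} The technical core is the claim $(\star)$: \emph{a geometrically-special normal projective variety $Y$ that admits a finite surjective morphism to an abelian variety $B$ is finite étale over $B$.} I would establish $(\star)$ by induction on $\dim B$. Since $Y\to B$ factors as $Y\to\Alb(Y)\to B$, the map $Y\to\Alb(Y)$ is quasi-finite, hence finite; thus the Albanese map of $Y$ is generically finite onto its image, so if $Y$ were of general type it would be pseudo-geometrically hyperbolic, contradicting that $Y$ is positive-dimensional and geometrically-special. Hence $Y\to B$ is a finite cover of an abelian variety which is not of general type; by the structure theory of such covers (after Ueno and Kawamata) there is a positive-dimensional abelian subvariety $B_1\subseteq B$ over whose cosets $Y\to B$ is étale. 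Stein-factorizing $Y\to B/B_1$ as $Y\to\overline Y\to B/B_1$, the variety $\overline Y$ is a normal projective geometrically-special variety (a dominant image of $Y$) finite over the abelian variety $B/B_1$ of strictly smaller dimension, to which the induction hypothesis applies: $\overline Y\to B/B_1$ is étale. A short further argument — using that $Y\to B$ then ramifies only along fibres of $B\to B/B_1$, and that a smooth family of abelian varieties over an abelian variety is isotrivial — forces $Y\to B$ to be étale, proving $(\star)$. \emph{This inductive reduction — controlling finite covers of an abelian variety under the geometric-specialness hypothesis — is where I expect the main difficulty to lie.}

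\emph{Connected fibres and multiple fibres.} Let $X\xrightarrow{g}A'\xrightarrow{\pi}A$ be the Stein factorization of $\mathrm{alb}$, so $g$ has connected fibres and $\pi$ is finite; I must show $\deg\pi=1$. Now $A'$ is a normal projective geometrically-special variety (a dominant image of $X$) finite over $A$, so $(\star)$ gives that $\pi$ is étale; thus $A'$ is a torsor under an abelian variety, and by the universal property of the Albanese the surjective morphism $X\to A'$ factors through $\mathrm{alb}$. Since $\pi\circ g=\mathrm{alb}$ is dominant, the induced morphism $A\to A'$ is a section of $\pi$, so $\deg\pi=1$ and $\mathrm{alb}$ has connected fibres. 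Finally, suppose $\mathrm{alb}$ has a multiple fibre over a prime divisor $D\subset A$, and let $m\geq 2$ be the greatest common divisor of the multiplicities of the components of $\mathrm{alb}^{-1}(D)$, so that $\mathrm{alb}^{-1}(D)=mE$ for some effective divisor $E$ on $X$. Using that $\Pic(A)$ is divisible and that $\mathrm{alb}^{*}$ is surjective on $\Pic^{0}$, choose $L\in\Pic(A)$ with $L^{\otimes m}\cong\mathcal{O}_A(D)$ and $\mathrm{alb}^{*}L\cong\mathcal{O}_X(E)$; the degree-$m$ cyclic cover $\widetilde A\to A$ associated to $L$ and a defining section of $D$ is then ramified along $D$, and the identification $\mathrm{alb}^{*}L\cong\mathcal{O}_X(E)$ provides a lift of $\mathrm{alb}$ to a dominant morphism $X\to\widetilde A$. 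After normalizing, $X$ dominates a normal projective variety finite and ramified over the abelian variety $A$; this variety is geometrically-special, contradicting $(\star)$. Hence $\mathrm{alb}$ has no multiple fibres in codimension one.
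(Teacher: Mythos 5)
Your surjectivity and connected-fibres arguments are, in substance, the paper's own: Ueno's fibration theorem, Kawamata's structure theorem for finite covers of abelian varieties, and Yamanoi's theorem that a general type variety of maximal Albanese dimension is pseudo-geometrically hyperbolic; your claim $(\star)$ is essentially Corollary \ref{cor:yamanoi_special_covers} (and its use for the Stein factorization is Corollary \ref{cor:geom_special_albanese}), and it does not need your induction — Kawamata plus Yamanoi give it directly. The genuine gap is in the multiple-fibres step. The negation of ``no multiple fibres in codimension one'' (Definition \ref{defn:mult}) only says that every component of the fibre over the generic point of $D$ has multiplicity $\geq 2$; it does not produce an integer $m\geq 2$ dividing all these multiplicities, so the identity $\mathrm{alb}^{-1}(D)=mE$ with which you start is unavailable in general (multiplicities $2$ and $3$, say, have gcd $1$). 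Without such divisibility no cyclic cover — in fact no finite cover of $A$ ramified along $D$ — admits a lift of $\mathrm{alb}$ near the fibre over $D$: at the generic point of a component $F_k$ the extension of discrete valuation rings $\mathcal{O}_{A,D}\subset\mathcal{O}_{X,F_k}$ has ramification index $t_k$, and it factors through a cover with ramification index $e$ along $D$ only if $e\mid t_k$. So your construction treats only the special ``divisible'' case, which is strictly weaker than the theorem.

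Even in the divisible case there are two further problems your sketch glosses over: $\mathrm{alb}^{*}D=mE$ can fail because of $\mathrm{alb}$-exceptional components of $\mathrm{alb}^{*}D$ lying over codimension-$\geq 2$ subsets of $A$, whose multiplicities need not be divisible by $m$ — this is precisely the ``codimension two'' subtlety flagged in Remarks \ref{remark:referee2} and \ref{remark:exc} — and the discrepancy $\mathrm{alb}^{*}L\otimes\mathcal{O}_X(-E)$ is only $m$-torsion in $\Pic(X)$, so it need not lie in $\mathrm{alb}^{*}\Pic^0(A)$ when $\NS(X)$ has torsion (fixable after a finite \'etale cover, but it must be said). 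The non-divisibility issue is exactly why the paper abandons covering tricks for this step: it passes, via Ueno's fibration applied to a component of the orbifold base, to a quotient abelian variety $A$ on which the relevant divisor $D$ is big, works with the orbifold pair $(A,(1-\tfrac{1}{m})D)$ where $m$ is the \emph{infimum} of the multiplicities, bounds $\deg f_i^{\ast}(\Delta-2\epsilon L)$ along a covering set using Yamanoi's Nevanlinna-theoretic inequalities (\cite[Corollary~2 and Proposition~5]{Yamanoi1}, the latter handling the exceptional locus), and concludes with Mori's bend-and-break against the absence of rational curves on $A$. As written, your proposal proves the first two assertions by the paper's route but does not prove the third.
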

 
Our proof of Theorem \ref{thm:main_result_I} relies   on  Yamanoi's seminal work on holomorphic curves in algebraic varieties of maximal Albanese dimension \cite{Yamanoi1}. In particular, although the statements are purely algebraic, we implicitly rely on Yamanoi's work on (complex-analytic) Nevanlinna theory.

Theorem \ref{thm:main_result_I} fits in perfectly with Conjecture \ref{conj:campana_geom} as the following result of Campana (see \cite[Proposition~5.3]{Ca04}) shows.  

\begin{theorem}[Campana] \label{thm:alb_of_special}
If $X$ is  a normal projective \emph{special} variety over  $k$, then the Albanese map $X\to \mathrm{Alb}(X)$ is surjective, has connected fibres, and has no multiple fibres in codimension one.
\end{theorem}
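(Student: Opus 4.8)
The plan is to reprove this following Campana's circle of ideas, combining the structure theory of subvarieties and quotients of abelian varieties with his results on orbifold bases of fibrations. Throughout, write $\alpha\colon X\to A:=\mathrm{Alb}(X)$ for the Albanese map, normalized to send a fixed point of $X(k)$ to $0$, and recall that $\alpha(X)$ generates $A$ and that every morphism from $X$ to an abelian variety factors through $\alpha$. To obtain surjectivity, suppose $W:=\alpha(X)$ is a proper subvariety of $A$. By Ueno's theorem the connected stabilizer $B=\{a\in A:a+W=W\}^{0}$ is an abelian subvariety, $W$ is the preimage under $A\to A/B$ of a subvariety $W'\subset A/B$ of general type, and, since $W$ generates $A$, one has $\dim W'\ge 1$; composing $\alpha$ with $A\to A/B$ then exhibits $X$ as dominating the positive-dimensional general-type variety $W'$. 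This is impossible for a \emph{special} variety: a non-zero pluricanonical form on a desingularization of $W'$ pulls back, after saturation, to a rank-one subsheaf of $\Lambda^{q}\Omega^1_{\widetilde X}$ with $q=\dim W'$ of Kodaira dimension $q$. Hence $\alpha$ is surjective.

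Next I would take the Stein factorization $\alpha=\pi\circ\beta$, with $\beta\colon X\to Z$ a fibration onto a normal projective variety and $\pi\colon Z\to A$ finite surjective, and form the orbifold base $(Z,\Delta_\beta)$ of $\beta$, so that $\Delta_\beta=\sum_{D}(1-\tfrac1{m_D})\,D$ runs over the prime divisors $D\subset Z$, with $m_D$ the multiplicity of the fibre of $\beta$ over $D$. By construction, $\alpha$ has connected fibres if and only if $\pi$ is an isomorphism, and $\alpha$ has no multiple fibres in codimension one if and only if $\Delta_\beta=0$. Here comes the key input: by Campana's theorem that the orbifold base of a fibration issued from a special variety is again special, $(Z,\Delta_\beta)$ is special; in particular $\kappa(K_Z+\Delta_\beta)\le 0$, and since $\Delta_\beta\ge 0$ also $\kappa(Z)\le 0$.

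It then remains to prove that $\pi$ is an isomorphism and $\Delta_\beta=0$. As $\pi$ is finite we have $\dim Z=\dim A$, and the Albanese map $a_Z\colon Z\to\mathrm{Alb}(Z)$ has finite fibres (it is followed by a homomorphism $\mathrm{Alb}(Z)\to A$ whose composite with $a_Z$ recovers $\pi$ up to translation), so $Z$ is of maximal Albanese dimension; hence $\kappa(Z)\ge 0$ by the non-vanishing theorem for varieties of maximal Albanese dimension, and therefore $\kappa(Z)=0$. By Kawamata's theorem on varieties of Kodaira dimension zero, applied to a resolution of $Z$, the map $a_Z$ is birational; being proper with finite fibres it is finite, hence an isomorphism onto the abelian variety $\mathrm{Alb}(Z)$. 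Thus $Z$ is an abelian variety and $\pi$ is, up to translation, the induced homomorphism $\mathrm{Alb}(Z)\to A$, which is a finite surjective homomorphism, i.e.\ an isogeny, in particular \'etale. Now $K_Z=0$, so $\kappa(K_Z+\Delta_\beta)\le 0$ reads $\kappa(Z,\Delta_\beta)\le 0$; since a non-zero effective divisor on an abelian variety has positive Iitaka dimension, this forces $\Delta_\beta=0$, i.e.\ $\alpha$ has no multiple fibres in codimension one. Finally, $\beta$ is a morphism to the abelian variety $Z$, so $\beta=g\circ\alpha$ for a homomorphism $g\colon A\to Z$ (up to translation); together with $\alpha=\pi\circ\beta$ and the surjectivity of $\alpha$ and $\beta$ this forces $\pi\circ g=\mathrm{id}$ and $g\circ\pi=\mathrm{id}$, so $\pi$ is an isomorphism and $\alpha$ has connected fibres. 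Combined with surjectivity, this proves all three assertions.

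The step I expect to be the real obstacle is the appeal, in the second paragraph, to Campana's theorem that the orbifold base of a fibration from a special variety is again special: this is what converts the hypothesis on $X$ into genuine control of the orbifold $(Z,\Delta_\beta)$, and it rests on his technique of pulling back orbifold Bogomolov sheaves along (neat models of) fibrations. The remaining ingredients — Ueno's structure theorem, the non-vanishing theorem and Kawamata's theorem on varieties of Kodaira dimension zero, and the behavior of Iitaka dimensions of effective divisors on abelian varieties — are standard.
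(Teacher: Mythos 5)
The load-bearing step of your argument is false as stated. From Campana's theorem that the orbifold base of a fibration of a special variety is special you infer ``in particular $\kappa(K_Z+\Delta_\beta)\le 0$''. Specialness does not bound the (orbifold) Kodaira dimension by zero: it only excludes the general type case, i.e.\ it gives $\kappa(K_Z+\Delta_\beta)\le \dim Z-1$. Already for varieties without orbifold structure there are special examples of every intermediate Kodaira dimension (e.g.\ simply connected elliptic surfaces with $\kappa=1$ and no multiple fibres), and the same holds for orbifold pairs. Everything after this point --- $\kappa(Z)=0$, hence $a_Z$ birational by Kawamata, hence $Z$ abelian and $\pi$ an isomorphism, hence $\Delta_\beta=0$ because nonzero effective divisors on abelian varieties have positive Iitaka dimension --- rests on that inequality, so both the connectedness and the no-multiple-fibres assertions collapse there; indeed, in this situation the inequality you assert is essentially equivalent to the conclusion, so the appeal to ``special $\Rightarrow\kappa\le 0$'' begs the question. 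A workable replacement (and the one this paper uses for its analogous statements, see Theorem \ref{thm:kawamata_fibn} and the proof of Corollary \ref{cor:yamanoi_special_covers}) is Kawamata's structure theorem for finite covers of abelian varieties: $Z$ is special (being the image of a special variety) and finite over $\mathrm{Alb}(X)$, so after a finite \'etale cover it is a fibre bundle over a normal projective variety $Y$ of general type finite over a quotient of $\mathrm{Alb}(X)$; specialness forces $\dim Y=0$, whence $Z$ is an abelian variety and $\pi$ is an isomorphism --- no Kodaira-dimension bound ``from specialness'' is needed for this part.

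Two further remarks. First, the key external input you lean on (specialness of the orbifold base) is proved by Campana for orbifold bases computed on suitable (neat) models, and the passage from that to the divisor $\Delta_\beta$ of Definition \ref{def:orbifold_base} on the given model, and thence to ``no multiple fibres in codimension one'', is exactly where exceptional components cause trouble; the paper explicitly flags this subtlety (Remark \ref{remark:exc}) and notes that it seems to have been overlooked in \cite[Proposition~5.3]{Ca04} itself, which is why its own proof of the geometrically-special analogue handles the multiple-fibre statement via Yamanoi's value-distribution inequalities rather than via the orbifold-base formalism. Second, be aware that the paper does not prove the statement you were asked about: it is quoted as Campana's theorem with a citation, so there is no internal proof to compare with; your surjectivity argument (Ueno plus the fact that a special variety cannot dominate a positive-dimensional variety of general type) is fine, but the rest has the gap described above.
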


We mention that, as explained in   \cite[Proposition~5.1]{CampanaClaudonStability}, 
for $X$   a smooth projective \emph{Brody-special} variety over  $\CC$,   the Albanese map $X\to \mathrm{Alb}(X)$ is surjective, has connected fibres, and has no multiple fibres in codimension one (see also Remark \ref{remark:analytic}).  
On the other hand, 
for $X$ a smooth projective \emph{arithmetically-special} variety over $k$, the Albanese map $X\to \mathrm{Alb}(X)$   is surjective by Faltings's theorem \cite{Fa94}. In light of Campana's conjectures, the fibers of the Albanese map should even be connected. However, as we do not have at our disposal  any satisfying results on the finiteness of rational points for ramified covers of abelian varieties (see \cite{CDJLZ} for recent progress nonetheless), proving this connectedness seems out of reach at the moment.

\subsection{The abelianity conjecture}

For $X$ a connected variety over $\CC$, we let $\pi_1(X)$ be the (topological) fundamental group of $X^{\an}$ (with respect to the choice of some basepoint). Note that the \'etale fundamental group $\pi_1^{\et}(X)$ of $X$ is the profinite completion of $\pi_1(X)$.

Campana conjectured that the fundamental group $\pi_1(X)$ of a smooth projective connected special variety over $\CC$ is virtually abelian (i.e., contains a finite index abelian subgroup).   
\begin{conjecture}[Campana's abelianity conjecture]\label{conj:campana2} Let $X$ be a smooth projective connected variety over $\CC$.
Then the following statements hold. 
\begin{enumerate}
\item If $X$ is special, then $\pi_1(X)$ is virtually abelian.
\item If $X$ is Brody-special, then $\pi_1(X)$ is virtually abelian.
\item If $X$ is arithmetically-special over $\mathbb{C}$, then $\pi_1(X)$ is virtually abelian.
\item If $X$ is geometrically-special over $\mathbb{C}$, then $\pi_1(X)$ is virtually abelian.
\end{enumerate}
\end{conjecture}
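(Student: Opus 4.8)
The plan is to deduce all four statements from the single case of geometrically-special varieties, to handle that case by induction on $\dim X$ using the Albanese fibration provided by Theorem~\ref{thm:main_result_I}, and, at the inductive step, to combine the resulting extension structure of $\pi_1(X)$ with the (separately established) fact that \emph{linear} quotients of such fundamental groups are virtually abelian, via the Auslander--Swan theorem that a finitely generated virtually polycyclic group is linear over $\ZZ$.

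\emph{Step 1: reduction to case (4).} Case (3) reduces to case (4) unconditionally. If $X$ is arithmetically-special over $\CC$, choose a finitely generated $K\subset\CC$ and a model $\mathcal X/K$ with $\mathcal X(K)$ dense, spread $\mathcal X$ out to a smooth family over a smooth affine $\Qbar$-variety $B$ with $\Qbar(B)=K$, and cut $B$ down to a sufficiently general curve through which enough of the given $K$-rational points persist; this exhibits $X$ as geometrically-special over $\CC$, so (4) implies (3). Case (2) (resp. (1)) reduces to (4) granting the relevant halves of Conjecture~\ref{conj:campana_an} and Conjecture~\ref{conj:campana_geom} (resp. Conjecture~\ref{conj:campana_geom}); alternatively, the argument in Steps~2--3 applies verbatim with ``geometrically-special'' replaced by ``special'' — using Campana's Theorem~\ref{thm:alb_of_special} in place of Theorem~\ref{thm:main_result_I} — or by ``Brody-special'', with Nevanlinna-theoretic inputs in place of the function-field ones. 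From now on $X$ is a smooth projective connected geometrically-special variety over $\CC$ and we must show $\pi_1(X)$ is virtually abelian.

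\emph{Steps 2--3: the Albanese induction.} We induct on $d=\dim X$, the case $d\le 1$ being classical. By Theorem~\ref{thm:main_result_I} the Albanese morphism $a\colon X\to \mathrm{Alb}(X)=:A$ is surjective, has connected fibres, and has no multiple fibres in codimension one; set $q=\dim A$, so $\pi_1(A)\cong\ZZ^{2q}$. Assume first $q\ge 1$, and let $F$ be a very general fibre of $a$, so that $\dim F<d$. Two facts are needed: (i) $F$ is again geometrically-special over $\CC$ (the analogue for geometrically-special varieties of Campana's theorem that the Albanese fibres of a special variety are special); (ii) the sequence
\[
\pi_1(F)\longrightarrow \pi_1(X)\longrightarrow \pi_1(A)\longrightarrow 1
\]
is exact with the image of $\pi_1(F)$ equal to the kernel — this uses that $a$ has connected fibres and, crucially, no multiple fibres in codimension one, so that no orbifold correction to $\pi_1(A)$ in the sense of Campana is needed. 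Granting (i), the inductive hypothesis gives that $\pi_1(F)$ is virtually abelian, in particular virtually polycyclic; by (ii), $\pi_1(X)$ is then an extension of $\ZZ^{2q}$ by a quotient of $\pi_1(F)$, hence finitely generated and virtually polycyclic. By the Auslander--Swan theorem, a finitely generated virtually polycyclic group embeds in some $\mathrm{GL}_N(\ZZ)$; thus $\pi_1(X)$ is a linear quotient of itself, and the theorem that every linear quotient of the fundamental group of a geometrically-special projective variety is virtually abelian applies to $\pi_1(X)$ directly. (That theorem itself is proved via the reductive and solvable Shafarevich morphisms: a Zariski-dense, non-virtually-abelian linear image would produce a dominant rational map from $X$ to a positive-dimensional variety of general type — impossible by Corollary~\ref{cor:image_is_not_psgeom} together with Theorem~\ref{thm:main_result_I}, such a variety having maximal Albanese dimension and hence being pseudo-geometrically hyperbolic — or to a positive-dimensional quotient carrying a Bogomolov sheaf, again excluded.) Hence $\pi_1(X)$ is virtually abelian. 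It remains to treat the base case $q=0$, where one must show $\pi_1(X)$ is virtually polycyclic; since $q(X)=0$ forces $H_1(X,\QQ)=0$, a finitely generated virtually polycyclic $\pi_1(X)$ is automatically finite, so the content of the base case is precisely the finiteness of $\pi_1(X)$ when the Albanese variety is trivial.

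\emph{The main obstacle.} Everything hinges on producing, for a geometrically-special $X$, the input that $\pi_1(X)$ is virtually polycyclic; then $(A)$ Auslander--Swan plus $(B)$ the linear-quotient theorem finish the proof. The Albanese induction reduces this to fact (i) — that very general Albanese fibres of geometrically-special varieties are geometrically-special — and to the base case $q(X)=0$, namely that a geometrically-special projective variety with trivial Albanese has finite fundamental group. Fact~(ii), the $\pi_1$-exactness of the Albanese fibration, is delicate but is exactly what the ``no multiple fibres in codimension one'' part of Theorem~\ref{thm:main_result_I} is designed to make accessible through Campana's orbifold $\pi_1$. The genuinely hard points are (i) and the base case: there is no general mechanism forcing fibres of a dominant map out of a geometrically-special variety to be geometrically-special, and already in the ``special'' world the finiteness of $\pi_1$ for varieties with trivial Albanese is open. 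This is why, at present, the conjecture is provable only modulo the subgroup $N=\bigcap_\rho\ker(\rho)$ (intersection of kernels of all finite-dimensional representations $\rho$ of $\pi_1(X)$): one obtains $\pi_1(X)/N$ virtually abelian, and the full statement would follow at once from $\pi_1(X)$ being virtually polycyclic, equivalently (given that $\pi_1(X)/N$ is then virtually abelian) from $N$ being virtually polycyclic.
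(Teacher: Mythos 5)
There is no proof of this statement to compare against: in the paper it is stated as a conjecture and explicitly described as open, and what the paper actually proves are the weaker linear-quotient statements (Theorems \ref{thm:pi1_geomhyp} and \ref{thm:pi1_geomhyp2}, alongside the quoted Theorems \ref{thm:pi1_campana} and \ref{thm:pi1_yamanoi}). Your proposal does not close that gap, and you essentially concede this yourself: the whole induction rests on (i) very general Albanese fibres of a geometrically-special variety being geometrically-special, and on the base case that a geometrically-special projective variety with trivial Albanese has finite $\pi_1(X)$, neither of which is proved here or in the paper (the latter is open already in the ``special'' setting). Without these inputs the scheme is circular at its core: Theorem \ref{thm:pi1_geomhyp} only controls \emph{linear} quotients, so to apply it to all of $\pi_1(X)$ you must first know $\pi_1(X)$ is linear, and the Auslander--Swan step that would provide linearity needs exactly the virtual polycyclicity you are trying to establish. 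So the argument converts ``virtually polycyclic'' into ``virtually abelian'' but cannot produce the polycyclic hypothesis.

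Two further steps are also not justified. First, your ``unconditional'' reduction of (3) to (4) asserts in passing that arithmetically-special implies geometrically-special; spreading out $\mathcal X(K)$ over a base and cutting to a curve does not produce the data required by Definition \ref{def:geom}, namely infinitely many maps from one fixed pointed curve $(C,c)$ all sending $c$ to one common point $u$ with graphs dense in $C\times X$ — the pointedness is the whole difficulty, and the paper treats the four specialness notions as a priori independent, all only conjecturally equivalent. Second, step (ii): exactness of $\pi_1(F)\to\pi_1(X)\to\pi_1(A)\to 1$ without orbifold correction requires triviality of the orbifold base of the Albanese fibration, and Remark \ref{remark:exc} explicitly cautions that ``no multiple fibres in codimension one'' (which is all Theorem \ref{thm:main_result_I} gives, because of possible $\mathrm{alb}$-exceptional components) is weaker than triviality of the orbifold divisor; so even this step is not secured by the results you invoke. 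In short, the proposal is a conditional reduction of an open conjecture to other open statements, not a proof.
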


This conjecture is still open. However, under the additional assumption that $\pi_1(X)$ admits a faithful linear complex representation, this conjecture is resolved in cases $(1)$ and $(2)$ by Campana and Yamanoi, respectively. Their precise results  are as follows (see  \cite[Theorem~7.8]{Ca04} and \cite[Theorem~1.1]{Yam10}).

\begin{theorem}[Campana]\label{thm:pi1_campana}
 If $X$ is a smooth projective \emph{special} variety over $\CC$, then the image of any homomorphism $\pi_1(X)\to \mathrm{GL}_n(\CC)$ is virtually abelian. 
\end{theorem}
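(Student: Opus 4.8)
\emph{Proof strategy.}
The plan is to argue by contradiction, using Campana's characterisation of special varieties \cite{Ca04}: if $X$ is special and $f\colon X\dashrightarrow Z$ is any dominant rational map, then its orbifold base $(Z,\Delta_f)$ is not of general type — otherwise the multiple-fibre-corrected pullback of $K_Z+\Delta_f$ would be a rank-one subsheaf of $\Lambda^{\dim Z}\Omega^1_X$ of Kodaira dimension $\dim Z$, contradicting the Bogomolov-sheaf condition in the definition of special. So, assuming that $\rho\colon \pi_1(X)\to \mathrm{GL}_n(\CC)$ has image $G$ which is \emph{not} virtually abelian, the goal is to produce a fibration out of $X$ whose orbifold base is positive-dimensional and of general type, thereby contradicting the hypothesis that $X$ is special.

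The case where $G$ is virtually solvable is the easier one, and I would treat it separately. Passing to the finite \'etale cover $X'\to X$ attached to a finite-index subgroup of $\pi_1(X)$ on which $\rho$ has solvable image, the group $\rho(\pi_1(X'))$ is a solvable linear quotient of the (projective, hence K\"ahler) group $\pi_1(X')$; by the structure theory of such quotients (Arapura--Nori, together with Delzant's refinements), and using Theorem~\ref{thm:alb_of_special} to control the abelianisation through the Albanese torus of $X$, one concludes that $\rho(\pi_1(X'))$ — hence $G$ — is after all virtually abelian. So from now on $G$ is not virtually solvable.

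In the remaining case I would invoke the resolution of the Shafarevich conjecture for linear fundamental groups (Eyssidieux; Eyssidieux--Katzarkov--Pantev--Ramachandran): attached to $\rho$, after possibly replacing $X$ by a proper modification, there is a surjective morphism with connected fibres $\mathrm{sh}_\rho\colon X\to S_\rho$ onto a normal projective variety $S_\rho$, whose fibres and contracted subvarieties are exactly those on which $\rho$ has finite image on $\pi_1$; since $G$ is infinite, $\dim S_\rho>0$, and endowing $S_\rho$ with the orbifold divisor $\Delta$ recording the multiple fibres of $\mathrm{sh}_\rho$ in codimension one, $G$ is, up to finite index, a quotient of $\pi_1(S_\rho,\Delta)$. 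By the first paragraph it now suffices to show that $(S_\rho,\Delta)$ is of general type. To this end I would pass to a finite \'etale cover of $S_\rho$ — harmless, since only bigness of the orbifold cotangent sheaf is at stake and that is insensitive to such covers — apply Simpson's nonabelian Hodge theory to split the tautological representation of $\pi_1(S_\rho,\Delta)$ into summands underlying $\CC$-variations of Hodge structure, and note that, the monodromy being not virtually abelian, the period map of at least one summand is non-isotrivial; the negativity estimates for period maps (Zuo, building on Griffiths--Schmid), in their logarithmic and orbifold refinements (Campana--Claudon, Deng), then force the orbifold cotangent sheaf of $(S_\rho,\Delta)$ to be big along the image of that period map, so $(S_\rho,\Delta)$ is of general type. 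This contradicts the specialness of $X$ and finishes the proof.

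The principal obstacle is exactly this passage from ``$\rho$ has large image'' to ``$(S_\rho,\Delta)$ is of orbifold general type'': it carries all the genuinely hard input, namely the construction of $\mathrm{sh}_\rho$ as an honest algebraic morphism — which for non-rigid and non-reductive $\rho$ requires harmonic maps to Bruhat--Tits buildings, not merely variations of Hodge structure — together with the orbifold strengthenings of Zuo-type negativity of the cotangent bundle. A secondary, more technical difficulty is that specialness is \emph{not} inherited by finite \'etale covers of $X$; this is why the Hodge-theoretic steps must be applied after passing to a cover of the Shafarevich base $S_\rho$, and, more broadly, why the whole argument is best organised within Campana's orbifold category rather than on smooth projective models.
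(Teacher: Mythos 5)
First, a point of orientation: the paper does not prove Theorem \ref{thm:pi1_campana} at all; it is quoted as Campana's theorem with the citation \cite[Theorem~7.8]{Ca04}, and the closest the paper comes is its proof of the geometrically-special analogue (Theorem \ref{thm:pi1_geomhyp}), which follows Yamanoi's decomposition into $p$-unbounded, rigid and non-rigid cases (Proposition \ref{prop:3.1}, Proposition \ref{prop:rigid}, Corollary \ref{cor:21}) and then settles the remaining solvable image via surjectivity of the Albanese of all \'etale covers and \cite[Theoreme~2.9]{CampanaEnsemble}. Your outline is in the same spirit as that known strategy (reduction map attached to $\rho$, nonabelian Hodge theory, contradiction with the absence of general-type orbifold bases on a special variety), but as a proof it has genuine gaps. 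The central step --- that the orbifold base $(S_\rho,\Delta)$ of the reduction attached to $\rho$ is of general type whenever the image is Zariski-dense and not virtually abelian --- is precisely the hard content of the theorem (Katzarkov, Eyssidieux, Zuo, Campana), and your sketch only treats representations that underlie, or can be deformed to, a $\CC$-VHS. For a non-rigid $\rho$ you cannot simply ``split the tautological representation into VHS summands'': Simpson's theory produces a VHS only at special points of the character variety, the deformation changes the image, and the passage from ``image not virtually abelian'' to ``non-isotrivial period map on a cover of $S_\rho$'' genuinely requires the non-archimedean input ($p$-adically unbounded representations, harmonic maps to Bruhat--Tits buildings, Zuo's spectral covers) that you only name as an ``obstacle''. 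Likewise, the factorization of $\rho$, up to finite index, through the fundamental group of the Shafarevich base is not automatic: it needs blow-ups and finite \'etale covers (Zuo's Lemma~2.2.3, exactly as used in the paper's proof of Proposition \ref{prop:3.1}). As written, the decisive implication is asserted rather than proved.

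Two further points are factually off. Specialness \emph{is} invariant under finite \'etale covers --- this is \cite[Theorem~5.12]{Ca04}, recorded in the paper as Proposition \ref{thm:fin_et}(1) --- so your closing remark that it is not is wrong; worse, this invariance is needed in your own solvable case, where you pass to a cover $X'\to X$ and must know that $X'$ and all of its finite \'etale covers are special in order to have surjective Albanese maps there. Second, Arapura--Nori and Delzant concern solvable K\"ahler groups themselves, not solvable linear \emph{quotients}; the statement your solvable case actually requires is Campana's Green--Lazarsfeld theorem \cite[Theoreme~2.9]{CampanaEnsemble}, namely that a solvable linear quotient factors through the Albanese after a finite \'etale cover once the Albanese map of every such cover is surjective (which Theorem \ref{thm:alb_of_special} plus \'etale invariance of specialness supplies); this is exactly how the paper concludes in the proof of Theorem \ref{thm:pi1_geomhyp}. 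So the architecture of your proposal matches the known proof, but the key general-type step and the solvable step both rest on citations that either do not apply as stated or amount to citing the result being proved.
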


\begin{theorem}[Yamanoi]\label{thm:pi1_yamanoi}
 If $X$ is a smooth projective \emph{ Brody-special} variety over $\CC$, then the image of any homomorphism $\pi_1(X)\to \mathrm{GL}_n(\CC)$ is virtually abelian. 
\end{theorem}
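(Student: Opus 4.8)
The plan is to follow, \emph{mutatis mutandis}, Campana's proof of Theorem~\ref{thm:pi1_campana}, isolating the single place where the hypothesis ``special'' is used and replacing it by its Brody-special counterpart. Recall that Theorem~\ref{thm:pi1_campana} is proved by contradiction: assuming the image $\Gamma := \rho(\pi_1(X))$ of $\rho\colon \pi_1(X)\to\mathrm{GL}_n(\CC)$ is not virtually abelian, one produces — after a finite \'etale cover and a birational modification of $X$ — a dominant rational map from $X$ to a positive-dimensional orbifold pair of general type, and then uses that a special variety admits no such map. The Hodge-theoretic and combinatorial half of this argument survives verbatim in the Brody-special setting; what must be redone is the assertion that a \emph{Brody-special} variety admits no dominant map to a positive-dimensional orbifold of general type, and this is exactly where Yamanoi's Nevanlinna theory enters.

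First comes the algebraic input, common to both theorems. One reduces to a reductive representation: by the structure theory of K\"ahler fundamental groups (Arapura--Nori, Campana, Delzant), $\Gamma$ is virtually abelian as soon as the image of the semisimplification $\rho^{\mathrm{ss}}$ is (a solvable linear quotient of a K\"ahler group is virtually nilpotent, and a nilpotent one is virtually abelian), so we may assume $\rho$ is semisimple with Zariski-dense, non-virtually-abelian image. By Corlette's theorem $\rho$ is then the monodromy of a harmonic bundle, and by Simpson's deformation we may further assume $\rho$ underlies a polarized $\CC$-variation of Hodge structure. Now the work of Katzarkov--Zuo and Eyssidieux(--Katzarkov--Pantev--Ramachandran) on Shafarevich morphisms produces, after a finite \'etale cover and a modification of $X$, a fibration $g\colon X\to Z$ onto a normal projective variety $Z$ with $\dim Z>0$, whose general fibre $F$ has $\rho(\pi_1(F))$ finite, and such that $Z$ — equipped with the orbifold structure recording the multiple fibres of $g$ — is of general type, or else is dominated by a ``big'' period map; the non-virtual-abelianity of $\Gamma$ is precisely what forces $\dim Z>0$. (One also invokes here the structure of the Albanese map of such varieties, i.e.\ the Brody-special analogue of Theorem~\ref{thm:alb_of_special}; see \cite{CampanaClaudonStability}.)

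Next comes the analytic input, which is the new part. Assume $X$ is Brody-special, so there is a Zariski-dense holomorphic $f\colon\CC\to X^{\an}$. Lifting $f$ through the \'etale cover and the modification and composing with $g$ yields an entire curve in $Z^{\an}$ with Zariski-dense image; moreover, since a multiple fibre of $g$ of multiplicity $m$ over a divisor $\Delta\subset Z$ forces $g\circ f$ to ramify to order $\geq m$ along $\Delta$, this curve is an \emph{orbifold} entire curve for the orbifold structure on $Z$. It thus suffices to know that a positive-dimensional orbifold of general type — and a variety carrying a big period map — admits no Zariski-dense orbifold entire curve. After a further Campana-type reduction (passing to the orbifold base of the core fibration, so that the base has maximal Albanese dimension in the orbifold sense), this follows from a second main theorem: for a hyperbolic orbifold \emph{curve} it is the lemma on the logarithmic derivative together with the ramification estimate, and in the higher-dimensional case it is Yamanoi's second main theorem for entire curves in varieties of maximal Albanese dimension \cite{Yamanoi1}; the period-map case is treated using the known hyperbolicity properties of quotients of period domains. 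In every case the existence of a Zariski-dense orbifold entire curve in $Z$ is contradicted, so $\Gamma$ is virtually abelian.

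The whole difficulty is concentrated in the analytic step: proving a second main theorem for entire curves in varieties of maximal Albanese dimension (and its orbifold refinement) sharp enough to force Zariski-degeneracy, which demands delicate control of the Nevanlinna error term and — crucially — of the ramification and ``moving target'' contributions. This is the technical heart of \cite{Yamanoi1} and \cite{Yam10}, with no shortcut available. A secondary, bookkeeping-type obstacle is to make the reductions of the algebraic step compatible with tracking \emph{virtual abelianity}, rather than merely ``finiteness modulo the abelian part'', which is what forces the repeated use of the structure theory of K\"ahler groups to exclude non-abelian nilpotent or solvable quotients along the way.
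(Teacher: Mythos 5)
This statement is not proved in the paper: it is Yamanoi's theorem, quoted from \cite[Theorem~1.1]{Yam10}; what the paper does is adapt Yamanoi's strategy to the geometrically-special case in Section~\ref{section:pi1}, and measured against that strategy your sketch has two genuine gaps. The first is your opening reduction: you claim that the non-reductive part is ``common to both theorems'' because a solvable linear quotient of a K\"ahler group is virtually nilpotent and a nilpotent one is virtually abelian. Quotients of K\"ahler groups are essentially unconstrained (a genus-two surface group surjects onto a free group of rank two, hence onto the solvable linear group $\mathrm{BS}(1,2)=\ZZ[1/2]\rtimes\ZZ$, which is not virtually nilpotent; and the integral Heisenberg group is nilpotent, linear, and not virtually abelian), so no purely K\"ahler-group-theoretic argument can dispose of the solvable case -- indeed the conclusion fails for the (non-Brody-special) genus-two curve. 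In Yamanoi's proof, and in the paper's adaptation (proof of Theorem~\ref{thm:pi1_geomhyp}), the Brody-special (resp.\ geometrically-special) hypothesis is used a \emph{second} time precisely here: one first shows there is no Zariski-dense representation into an almost simple group (the analogue of Corollary~\ref{cor:21}), concludes the image is solvable, and then invokes Campana's theorem \cite[Th\'eor\`eme~2.9]{CampanaEnsemble}, whose hypothesis is the surjectivity of the Albanese map of \emph{every} finite \'etale cover (the analogue of Corollary~\ref{cor:covers_have_surj_alb}), to factor the representation through the Albanese torus. Your sketch omits this use of the hypothesis entirely.

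The second gap is the phrase ``by Simpson's deformation we may further assume $\rho$ underlies a polarized $\CC$-variation of Hodge structure.'' Deforming a non-rigid reductive representation to a VHS does not preserve the Zariski closure of the image (nor non-virtual-abelianity), so one cannot simply assume the representation is a VHS; this is exactly why Yamanoi's argument (and Section~\ref{section:pi1} of the paper) is organized as a dichotomy: rigid representations are defined over a number field and, when $p$-bounded at all finite places, are integral and underlie a VHS by Simpson, after which one uses that a Brody-special (resp.\ geometrically-special) variety admits no non-constant period map (the analogue of Theorem~\ref{thm:period_maps} and Proposition~\ref{prop:rigid}); non-rigid representations instead yield a $p$-unbounded representation via character-variety arguments, and that case is handled not by period maps but by Zuo's spectral covers, reduction to bigness via Shafarevich maps, and Yamanoi's Nevanlinna-theoretic hyperbolicity of general-type varieties of maximal Albanese dimension (the analogues of Theorem~\ref{prop:3.1a} and Proposition~\ref{prop:3.1}). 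You do correctly identify the analytic heart (Yamanoi's second main theorem from \cite{Yamanoi1} plus period-domain hyperbolicity), and your ``orbifold base of general type'' framing is closer to Campana's proof of Theorem~\ref{thm:pi1_campana} than to Yamanoi's actual route; but as written the reduction steps that feed into that analytic core do not go through.
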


Motivated by Conjecture \ref{conj:campana_geom}, Conjecture  \ref{conj:campana2}, Campana's theorem (Theorem \ref{thm:pi1_campana}), and Yamanoi's theorem (Theorem \ref{thm:pi1_yamanoi}), we  prove the following result.

\begin{theorem}[Main Result, II] \label{thm:pi1_geomhyp}
 If $X$ is a smooth projective \emph{geometrically-special} variety over $\CC$, then the image of any homomorphism $\pi_1(X)\to \mathrm{GL}_n(\CC)$ is virtually abelian.    
\end{theorem}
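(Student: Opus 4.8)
The plan is to follow the strategy by which Campana proves Theorem~\ref{thm:pi1_campana} and Yamanoi proves Theorem~\ref{thm:pi1_yamanoi}, replacing the inputs on specialness, respectively on entire curves, by the corresponding statements over function fields established earlier in this paper -- principally Corollary~\ref{cor:image_is_not_psgeom} and Proposition~\ref{prop:alg_hyp_implies_geom_hyp}, together with Theorem~\ref{thm:main_result_I} and Yamanoi's work on varieties of maximal Albanese dimension. Arguing by contradiction, suppose $\rho\colon\pi_1(X)\to\mathrm{GL}_n(\CC)$ has image $H$ which is not virtually abelian. I would first perform the usual algebraic reductions on $\rho$. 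Let $G$ be the Zariski closure of $H$; replacing $X$ by the finite \'etale cover attached to $\rho^{-1}(G^{\circ})$ (again geometrically-special) makes $G$ connected, and replacing $\rho$ by its semisimplification makes it reductive. Composing with $G\to G/R(G)$ and then with the adjoint representation, and using that a virtually solvable quotient of the fundamental group of a smooth projective variety is virtually abelian (Arapura--Nori) -- so that a failure of non-virtual-abelianness at any intermediate stage would already finish the proof -- one reduces to the case where $\rho$ has Zariski-dense, infinite, non-virtually-abelian image in a nontrivial semisimple adjoint group.

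Next I would invoke the structure theory for Zariski-dense representations of projective fundamental groups -- the work of Corlette--Simpson, Eyssidieux, and Eyssidieux--Katzarkov--Pantev--Ramachandran on the linear Shafarevich conjecture -- to produce, after one further finite \'etale cover, the Shafarevich fibration $s\colon X\to Y$ of $\rho$: a surjective morphism with connected fibres onto a normal projective variety $Y$ with $\dim Y>0$ (because $\rho$ has infinite image), satisfying one of two alternatives. Either (a) $Y$ is a curve and $\rho$ factors, up to finite index, through an orbifold fibration $X\to(Y,\Delta)$ onto a hyperbolic orbifold curve of general type; or (b) $Y$ carries a polarized $\CC$-variation of Hodge structure whose period map is generically finite onto its image. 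In case (a), geometric-specialness of $X$ would force, for some fixed smooth curve $C$ and some point, infinitely many orbifold morphisms $C\to(Y,\Delta)$ through that point whose graphs cover $C\times Y$; but the one-dimensional case of the Lang--Campana conjecture -- the classical finiteness of morphisms from a fixed curve to a hyperbolic orbifold curve -- forbids this, a contradiction.

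Case (b) is the heart of the matter. There $s\colon X\to Y$ is dominant and $X$ is geometrically-special, so by Corollary~\ref{cor:image_is_not_psgeom} the positive-dimensional variety $Y$ is not pseudo-geometrically hyperbolic. On the other hand, a positive-dimensional normal projective variety carrying a polarized $\CC$-variation of Hodge structure with generically finite period map is pseudo-algebraically hyperbolic -- this is the deepest external ingredient, resting on the hyperbolicity of base spaces of variations of Hodge structure in the work of Viehweg--Zuo, Brunebarbe, and Deng -- and hence pseudo-geometrically hyperbolic by Proposition~\ref{prop:alg_hyp_implies_geom_hyp}. This contradicts the preceding sentence, completing the proof.

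The step I expect to be the main obstacle is precisely the execution of case (b): proving, in a shape directly compatible with Proposition~\ref{prop:alg_hyp_implies_geom_hyp} and Corollary~\ref{cor:image_is_not_psgeom}, that the Shafarevich variety $Y$ of a Zariski-dense semisimple representation is pseudo-geometrically hyperbolic. The subtleties are that the variation of Hodge structure lives only over a Zariski-open of $Y$ and generally degenerates along a boundary divisor, that $Y$ is only normal and carries a natural orbifold structure that must be tracked, and that passing between the VHS-hyperbolicity statements -- typically phrased in terms of the period map -- and pseudo-algebraic-hyperbolicity of $Y$ itself is exactly where Theorem~\ref{thm:main_result_I} and Yamanoi's results on maximal Albanese dimension should intervene, for instance to control the multiple fibres of $s$ and to reduce to the hyperbolic situations in which these results apply. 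A secondary delicate point, already present in the reductions, is the behaviour of geometric-specialness under finite \'etale covers: if this is not available in the needed generality, one must instead run the Corlette--Simpson/Eyssidieux machinery without such covers, or descend the required hyperbolicity statement back to $X$ itself.
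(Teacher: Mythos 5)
Your proposal has two genuine gaps, and they sit at exactly the two places where the paper has to do real work. First, the final reduction is broken: you dismiss the solvable case by asserting that a virtually solvable (linear) quotient of the fundamental group of a smooth projective variety is virtually abelian, citing Arapura--Nori. That statement is false for general projective $X$: the fundamental group of a genus-two curve surjects onto the discrete Heisenberg group inside $\mathrm{GL}_3(\ZZ)$, a nilpotent, non-virtually-abelian linear quotient, and the Arapura--Nori theorem only gives virtual \emph{nilpotence} of solvable linear images. Virtual abelianity of the solvable part is precisely where geometric-specialness must enter: the paper uses the fact that the Albanese map of \emph{every} finite \'etale cover of $X$ is surjective (Corollary \ref{cor:covers_have_surj_alb}, a consequence of Theorem \ref{thm:main_result_I}) together with Campana's Th\'eor\`eme 2.9 in \cite{CampanaEnsemble} to show that a solvable image factors, after a finite \'etale cover, through $\pi_1$ of the Albanese, hence is virtually abelian. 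Without some such argument your reduction does not terminate.

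Second, the dichotomy you extract from the linear Shafarevich machinery --- either $\rho$ factors through a hyperbolic orbicurve, or the Shafarevich base carries a $\CC$-VHS with generically finite period map --- is not a theorem for a general almost simple target; it is essentially the rank-one (Corlette--Simpson) picture. For higher-rank $G$ a non-rigid Zariski-dense representation need not factor through a curve and need not underlie a VHS, and this missing case is the technical heart of the paper: non-rigid representations are converted into $p$-unbounded ones by Yamanoi's character-variety argument (Proposition \ref{prop:yamanoi_character_variety}), and the $p$-unbounded case is handled in Theorem \ref{prop:3.1a} and Proposition \ref{prop:3.1} via Zuo's spectral coverings, Yamanoi's pseudo-algebraic hyperbolicity of general-type varieties of maximal Albanese dimension (Theorem \ref{thm:yamanoi}), a delicate genus estimate for the pullbacks of the covering curves to the ramified spectral cover, Mori's bend-and-break, and the theory of Shafarevich maps to reduce to a big representation. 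Your case (b) is close in spirit to the paper's \emph{rigid} case, where the paper, after checking $p$-boundedness at all places, invokes Simpson to produce a period map and then rules out geometric-specialness via Bakker--Brunebarbe--Tsimerman plus Deligne's finiteness (Lemma \ref{lem:deligne}, Theorem \ref{thm:period_maps}), rather than Viehweg--Zuo-type hyperbolicity of VHS bases; that substitution might be made to work, but as written your argument simply omits the unbounded, non-rigid, higher-rank case, so it does not prove the theorem.
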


Our final result pushes Theorem \ref{thm:pi1_geomhyp}   a bit further. To state it, we follow \cite[p.~552]{Yam10} and recall that a representation $\rho:\pi_1(X)\to \mathrm{GL}_n(\CC)$ is   \emph{big} if there is a countable  collection of proper closed subsets $X_i\subsetneq X$ with $i=1,\ldots$ such that, for every positive-dimensional subvariety $Z\subset X$ containing a point of $X\setminus \cup_{i=1}^\infty X_i$, the group $\rho[\mathrm{Im}(\pi_1(\tilde{Z})\to \pi_1(X))]$ is infinite, where   $\tilde{Z}\to Z$ is a desingularization of $Z$.
 
 \begin{theorem}[Main Result, III]\label{thm:pi1_geomhyp2}
 Let $X$ be a smooth projective connected variety over $\CC$ with a big representation $\rho:\pi_1(X)\to \mathrm{GL}_n(\CC)$. If $X$ is geometrically-special, then there exists a finite \'etale covering $Y\to X$ which is birationally equivalent to an abelian variety.
 \end{theorem}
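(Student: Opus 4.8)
The plan is to reduce, via Theorem~\ref{thm:pi1_geomhyp}, to the case where $\rho$ has torsion-free abelian image — in which case $\rho$ necessarily factors through the Albanese map of $X$ — and then to use bigness of $\rho$ to rule out any contracted positive-dimensional subvariety through a very general point, so that $\mathrm{alb}_X$ becomes generically finite; Theorem~\ref{thm:main_result_I} then upgrades this to a birational equivalence onto $\mathrm{Alb}(X)$.

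\emph{Reduction to abelian monodromy.} By Theorem~\ref{thm:pi1_geomhyp}, $\Gamma:=\rho(\pi_1(X))$ is virtually abelian, and being finitely generated it contains a finite-index subgroup $\Gamma_0\cong\ZZ^r$ that is free abelian. Put $H:=\rho^{-1}(\Gamma_0)\subset\pi_1(X)$ and let $\pi\colon X_H\to X$ be the associated connected finite \'etale covering, so that $X_H$ is smooth projective connected, $\pi_1(X_H)\cong H$, and $\rho_H:=\rho\circ(\pi_1(X_H)\hookrightarrow\pi_1(X))$ has torsion-free abelian image $\Gamma_0$. Two facts are needed, and establishing them is where most of the work lies: (a) $X_H$ is again geometrically-special over $\CC$, and (b) $\rho_H$ is again big. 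For (b) one notes that the image $W:=\pi(Z)$ of a positive-dimensional subvariety $Z\subset X_H$ is positive-dimensional in $X$, that $Z$ occurs as a component of the \'etale pullback of $W$ along $\pi$, and hence that $\mathrm{Im}(\pi_1(\widetilde Z)\to\pi_1(X))$ has finite index in $\mathrm{Im}(\pi_1(\widetilde W)\to\pi_1(X))$; bigness of $\rho$ then gives bigness of $\rho_H$, with the bad loci of $\rho_H$ being the preimages under $\pi$ of those of $\rho$. Fact (a) is more delicate: given a dense open $V\subseteq X_H$, applying geometric-specialness of $X$ to the dense open $U:=X\setminus\pi(X_H\setminus V)$ yields morphisms $f_i\colon C\to X$ through a point of $U$ with graphs dense in $C\times X$, but these need not lift to $X_H$ on the nose; since each $f_i$ pulls $\pi$ back to a finite \'etale cover of $C$ of fixed degree, and $C$ has only finitely many such covers up to isomorphism, one groups the $f_i$ accordingly and, using that $C\times X$ is irreducible, finds a single cover $C^{*}\to C$ carrying infinitely many lifts $C^{*}\to X_H$ through a point of $V$ whose graphs are again dense in $C^{*}\times X_H$. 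Granting (a) and (b), we replace $X$ by $X_H$ (so the cover $Y$ we produce will be this $X_H$) and assume henceforth that $\rho(\pi_1(X))$ is free abelian.

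\emph{Factoring through the Albanese map, and generic finiteness.} Since $\rho(\pi_1(X))$ is abelian and torsion-free, $\rho$ factors through $\pi_1(X)^{\mathrm{ab}}=\HH_1(X,\ZZ)$ and then through $\HH_1(X,\ZZ)$ modulo torsion; but the latter quotient is precisely the map induced on $\pi_1$ by $\mathrm{alb}_X$, because $\HH_1(X,\ZZ)/(\text{torsion})=\HH_1(\mathrm{Alb}(X),\ZZ)=\pi_1(\mathrm{Alb}(X))$. Hence $\rho=\psi\circ(\mathrm{alb}_X)_{*}$ for some $\psi\colon\pi_1(\mathrm{Alb}(X))\to\mathrm{GL}_n(\CC)$. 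Now suppose $\mathrm{alb}_X$ were not generically finite. By Theorem~\ref{thm:main_result_I} it is surjective, so $\dim X>\dim\mathrm{Alb}(X)$ and, by the theorem on the dimension of fibres, every point of $X$ lies on a positive-dimensional component of its $\mathrm{alb}_X$-fibre. Let $\bigcup_{i\ge1}X_i$ be a countable union of proper closed subsets witnessing bigness of $\rho$; since $X(\CC)$ is a Baire space, choose $x_0\in X(\CC)\setminus\bigcup_iX_i$, and let $Z$ be the component through $x_0$ of $\mathrm{alb}_X^{-1}(\mathrm{alb}_X(x_0))$. Then $Z$ is positive-dimensional, meets $X\setminus\bigcup_iX_i$, and is contracted by $\mathrm{alb}_X$; thus $\widetilde Z\to Z\hookrightarrow X\xrightarrow{\mathrm{alb}_X}\mathrm{Alb}(X)$ is constant, so $(\mathrm{alb}_X)_{*}$ — and therefore, by the factorization above, $\rho$ — kills $\mathrm{Im}(\pi_1(\widetilde Z)\to\pi_1(X))$, contradicting bigness of $\rho$. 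Hence $\mathrm{alb}_X$ is generically finite.

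\emph{Conclusion, and the main obstacle.} The morphism $\mathrm{alb}_X$ is now proper, surjective, generically finite, and has connected fibres by Theorem~\ref{thm:main_result_I}; its generic fibre is therefore a single reduced point, so $\mathrm{alb}_X$ is birational. Consequently $X$ — which, by the reduction step, is a finite \'etale cover $Y$ of the variety we started with — is birationally equivalent to the abelian variety $\mathrm{Alb}(X)$, which is exactly the assertion. The main obstacle is not the final chain of implications, which is short, but the descent in the reduction step: showing that ``big'' and especially ``geometrically-special'' pass to the finite \'etale cover $X_H$, where one cannot lift the covering family of curves directly and must instead argue with the finitely many isomorphism types of the induced covers of $C$ together with irreducibility of $C\times X$. (The only deep external input is the surjectivity of the Albanese map in Theorem~\ref{thm:main_result_I}, which rests on Yamanoi's work.)
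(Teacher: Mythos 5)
Your proposal is correct and follows essentially the same route as the paper: pass to a finite \'etale cover with free abelian monodromy (the specialness of the cover is exactly the paper's Lemma \ref{lem:cw_geom_special}), factor $\rho$ through $\pi_1(\mathrm{Alb}(X))$, and use bigness together with Theorem \ref{thm:main_result_I} to force the Albanese map to be birational. Your explicit check that bigness descends to the cover, and your final step via generic finiteness plus connected fibres instead of the paper's direct observation that a general (connected) fibre is killed by $\rho$ and hence is a point, are only minor variants of the same argument.
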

 
 The proofs of Theorem \ref{thm:pi1_geomhyp} and \ref{thm:pi1_geomhyp2}  rely on  our structure result for the Albanese map of a geometrically-special variety (Theorem \ref{thm:main_result_I}),   foundational results from Hodge theory due to Deligne, Griffiths, and Schmid (see Lemma \ref{lem:deligne}),  as well as the recent resolution of Griffiths's algebraicity conjectures for period maps by Bakker-Brunebarbe-Tsimerman (see Theorem \ref{thm:period_maps}),   Zuo's results on spectral covers (see the proof of Theorem \ref{prop:3.1a}),  and the theory of Shafarevich maps (see the proof of Proposition \ref{prop:3.1}). Arguably, the novel technical result in our proof of Theorem \ref{thm:pi1_geomhyp} is  Theorem \ref{prop:3.1a}.

 \subsection{Outline of paper} 
 In Section \ref{section:geom_special} we prove basic properties of geometrically-special varieties such as the invariance by finite \'etale covers (Lemma \ref{lem:cw_geom_special}), as well as the fact that the image of a geometrically-special variety is geometrically-special (Lemma \ref{lem:surj}). In Section \ref{section:abelian varieties} we show that abelian varieties are geometrically-special (Proposition \ref{prop:ab_var_are_special}), and prove the structure result for the Albanese map of a geometrically-special projective variety (Theorem \ref{thm:main_result_I}) by applying Yamanoi's results on the pseudo-hyperbolicity of ramified covers of abelian varieties, as well as Yamanoi's work on the pseudo-hyperbolicity of orbifold pairs on abelian varieties (see Section \ref{section:yamanoi_orb}). In Section \ref{section:pi1}, we prove that linear quotients of the fundamental group of a smooth projective connected geometrically-special variety  are virtually abelian.  
 
 \begin{ack}
The ideas of Campana and Yamanoi  have had an enormous impact on this paper. We are grateful to both of them for their past work without which this paper would not have existed.  We thank Philipp Habegger and Junyi Xie for telling us about Lemma \ref{lem:countable}. The first named author thanks the IHES and  the University of Paris-Saclay for their hospitality. Part of this work was done during the visit of the first and second named author at the Freiburg Institute for Advanced Studies; they thank the Institute for providing an excellent working environment.
We are grateful to the referees for many useful comments.
 \end{ack}
 
 \begin{con} Throughout this paper, we let $k$ be an algebraically closed field of characteristic zero.  A \emph{variety} over $k$ is a finite type separated integral (i.e., irreducible and reduced) scheme over $k$.

 If $X$ is a variety over $k$  and   $A\subset k$ is a subring, then a \emph{model for $X$ over $A$} is a pair $(\mathcal{X},\phi)$ with $\mathcal{X}\to \Spec A$ a finite type separated scheme and $\phi:\mathcal{X}\otimes_A k \to X$ an isomorphism of schemes over $k$. We  omit $\phi$  from our notation.
 
 A \emph{pointed variety (over $k$)}  is a pair $(V,v)$, where  $V$ is a variety over $k$ and $v$ is an element of $V(k)$. A pointed variety $(C,c)$ is a \emph{pointed curve} if $C$ is pure of dimension one. We say that a pointed variety $(V,v)$ is smooth (resp. connected, resp. projective) if $V$ is smooth (resp. connected, resp. projective). A morphism $(V,v)\to (V',v')$ of pointed varieties consists of the data of a morphism $f:V\to V'$ of varieties over $k$ such that $f(v) = v'$.

 If $X$ is a locally finite type scheme over $\mathbb{C}$, we let $X^{\an}$ be the associated complex-analytic space.  
 \end{con}
 
\section{Geometrically-special varieties}\label{section:geom_special}
Guided by the expectation that geometric-specialness coincides with Campana's notion of specialness (Conjecture \ref{conj:campana_geom}),  
we will prove several basic properties of geometrically-special varieties in this section; none of the results in this section are particularly hard to prove.

Let $k$ be an algebraically closed field of characteristic zero.
Recall that   a variety $X$ over $k$ is  {geometrically-special over $k$} if, for every dense open subset $U\subset X$, there exists a smooth affine connected pointed curve $(C,c)$, a point $x$ in $U(k)$, and a sequence of   morphisms $\{f_i:(C,c)\to (X,u)\}_{i=1}^\infty$  such that  the closure of $\cup_{i=1}^{\infty} \Gamma_{f_i}$  in $C\times X$ equals $ C\times X$ (see Definition \ref{def:geom}). 
We will sometimes refer to a sequence of morphisms $f_i:(C,c)\to (X,u)$ such that  the closure of $\cup_{i=1}^\infty \Gamma_{f_i}$ equals $C\times X$ as a \emph{covering set} for $X$ (even though it is really a ``covering'' of $C\times X$).

Clearly, a proper variety $X$ over $k$ is geometrically-special over $k$ if and only if, for every dense open subset $U\subset X$, there exists a smooth  \emph{projective} connected pointed curve $(C,c)$ over $k$,  a point $u$ in $U(k)$, and a covering set $f_i:(C,c)\to (X,u)$ for $X$. The existence of a covering set here is in fact equivalent to the   the universal evaluation map 
\[
C\times \underline{\Hom}_k((C,c),(X,x))\to C \times X
\] being dominant, where    $\underline{\Hom}_k((C,c),(X,x))$ denotes the moduli scheme of morphisms $(C,c)\to (X,x)$ over $k$; this moduli scheme is a countable union of quasi-projective varieties over $k$.

Being geometrically-special is related to   potential density of ``pointed'' rational points, as we briefly explain    in the following remark.

\begin{remark} \label{remark:potential_density} Let $K$ be the function field of a smooth connected curve $C$ over $k$, and let $X$ be a variety over $\overline{K}$. We say that $X$ \emph{satisfies $k$-pointed-potential density} if, for every dense open subset $U \subset X$, there is a point $u$ in $U$, a curve $D$ over $k$ quasi-finite over $C$, a point $d$ in $D(k)$, and a model $(\mathcal{X},x)$ for $(X,x)$ over $D$  such that 
\[
\mathrm{Im}[ \Hom_C( (D,d), (\mathcal{X},x ) )  \to X(\overline{K})]
\] is dense in $X$. This extends the above definition in the following sense. A variety $X$ over $k$ is geometrically-special over $k$ if and only if (the ``constant'' variety) $X_{\overline{k(t)}}$ satisfies $k$-pointed-potential density.  
\end{remark}

   Note that Conjecture \ref{conj:campana_geom} is concerned with isotrivial varieties over $k(t)$.  On the other hand, even in the non-isotrivial case, it seems reasonable to supect density of   ``pointed rational points''; a similar conjecture was stated for (non-pointed) rational points on non-isotrivial varieties by Campana \cite{Ca11}.

   \subsection{Algebraic and geometric hyperbolicity}\label{section:psgeomhyp}
 To keep track of the ``exceptional locus'' of   a pseudo-geometrically hyperbolic variety (Definition \ref{def:psgeomhyp2}), we   use the notion of geometric hyperbolicity modulo a closed subset. 
 
\begin{definition} \label{def:psgeomhyp2}   Let $X$ be a variety over $k$. If $Z$ is a closed subset of $X$, then $X$ is \emph{geometrically hyperbolic modulo $Z$} if,    for every $x$ in $X(k)\setminus Z$ and every smooth connected pointed curve $(C,c)$ over $k$,  the set $\Hom_k((C,c),(X,x))$    is finite.
\end{definition}

Bounding the degrees of maps from curves into a compact space   forces finiteness properties of sets of pointed maps (see \cite{JKa} for precise statements). To make this more precise, we recall Demailly's notion of algebraic hyperbolicity \cite{Dem97}; we discuss this notion further in the orbifold setting in Section \ref{section:psalg}.

\begin{definition} \label{def:alghyp1}Let $ X$ be a projective variety and let $E\subset X$ be a closed subset. Then    $X$  is \emph{algebraically hyperbolic modulo $E$ over $k$} if, for every ample line bundle $L$ on $X$, there is a constant $\alpha_{X,L,E}$ such that, for every smooth projective connected curve $C$ over $k$, and every   morphism $f:C\to X$ with $f(C)\not\subset  E$, the inequality
\[
\deg_C f^\ast L \leq \alpha_{X,L,E} \cdot \mathrm{genus}(C)
\] holds.
\end{definition}

\begin{definition}\label{def:alghyp2}
A projective variety $X$ is \emph{pseudo-algebraically hyperbolic over $k$} if there is a proper closed subset $E\subsetneq X$   such that $X$ is algebraically  hyperbolic modulo $E$.
\end{definition}

We will use the following simple application of Mori's bend-and-break (see   \cite[Theorem~1.12.(2)]{JXie}).
\begin{proposition}
\label{prop:alg_hyp_implies_geom_hyp}
If $X$ is a projective pseudo-algebraically hyperbolic scheme over $k$, then $X$ is pseudo-geometrically hyperbolic over $k$. \qed
\end{proposition}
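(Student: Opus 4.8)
The plan is to prove the slightly more precise statement that if $X$ is a projective variety that is algebraically hyperbolic modulo a proper closed subset $E\subsetneq X$, then $X$ is geometrically hyperbolic modulo $E$ in the sense of Definition \ref{def:psgeomhyp2} (the case of a general projective pseudo-algebraically hyperbolic scheme reduces to this by working one irreducible component at a time). Pseudo-geometric hyperbolicity then follows by taking $Z=E$. So fix $x\in X(k)\setminus E$ and a smooth connected pointed curve $(C,c)$ over $k$; the goal is to show $\Hom_k((C,c),(X,x))$ is finite.

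First I would reduce to maps from a projective curve. Let $\overline{C}$ be the smooth projective model of $C$, so that $C$ is an open subscheme of $\overline{C}$ and $c\in C\subset \overline{C}$. Since $X$ is proper and $\overline{C}$ is a smooth curve, every morphism $f:C\to X$ extends uniquely to $\overline{f}:\overline{C}\to X$, and $f\mapsto\overline{f}$ is injective; hence it suffices to prove that $\Hom_k((\overline{C},c),(X,x))$ is finite, and I may assume $C$ is a smooth projective connected curve of genus $g$. Next comes boundedness: fix an ample line bundle $L$ on $X$. Any $f\in\Hom_k((C,c),(X,x))$ has $f(c)=x\notin E$, hence $f(C)\not\subset E$, so Definition \ref{def:alghyp1} gives $\deg_C f^\ast L\leq \alpha_{X,L,E}\cdot g$. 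If $g=0$ this forces $f$ to be constant and $\Hom_k((C,c),(X,x))$ is a single point; if $g\geq 1$, the bounded-degree locus of the Hom scheme is of finite type over $k$, so $\Hom_k((C,c),(X,x))$, being a closed subscheme of it, is of finite type and thus has finitely many irreducible components, each of finite dimension.

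Finally I would rule out positive-dimensional components using Mori's bend-and-break. Suppose $T\subset\Hom_k((C,c),(X,x))$ is a positive-dimensional irreducible component. Then (after normalizing a curve through $[f]$ in $T$) we obtain a smooth projective curve $B$, a dense open $B^\circ\subset B$, and a non-trivial family $g:B^\circ\times C\to X$ with $g(b,c)=x$ for all $b\in B^\circ$; in particular $T$ contains non-constant morphisms. Mori's bend-and-break, in its one-marked-point form, then produces a rational curve in $X$ through $x$. But since $X$ is algebraically hyperbolic modulo $E$, any rational curve on $X$ not contained in $E$ would, after normalization, yield a non-constant morphism $\mathbb{P}^1\to X$ with image not in $E$, contradicting $\deg f^\ast L\leq \alpha_{X,L,E}\cdot \mathrm{genus}(\mathbb{P}^1)=0$; and the rational curve through $x$ is not contained in $E$ because $x\notin E$. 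This contradiction shows $\Hom_k((C,c),(X,x))$ is zero-dimensional, and being of finite type over the algebraically closed field $k$, it is finite.

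The only nontrivial step is the last one: translating the abstract positive-dimensional family inside the Hom scheme into an honest one-parameter degeneration, compactifying the base $B$, and controlling the limit cycle so that the rational component that breaks off genuinely passes through the fixed point $x$. This is the standard ``simple application'' of bend-and-break recorded in \cite[Theorem~1.12.(2)]{JXie}; everything else is the extension of curves into proper schemes together with boundedness of Hom schemes of bounded degree.
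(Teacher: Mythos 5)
Your argument is correct and is essentially the approach the paper has in mind: the statement is quoted there with a reference to \cite[Theorem~1.12.(2)]{JXie}, and the same degree-bound-plus-bend-and-break reasoning is spelled out in the paper's orbifold generalization (Proposition \ref{prop:from_alg_hyp_to_geom_hyp}), where the rational curves produced through $x\notin E$ are excluded exactly as you do, since algebraic hyperbolicity modulo $E$ forbids non-constant maps $\mathbb{P}^1\to X$ with image not contained in $E$. So your write-up matches the intended proof; no gaps beyond the routine bookkeeping you already flag.
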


Since a positive-dimensional projective geometrically-special scheme over $k$ is not pseudo-geometrically hyperbolic, it follows from Proposition \ref{prop:alg_hyp_implies_geom_hyp} that such a variety is also not pseudo-algebraically hyperbolic.   

For the reader's convenience, we give several examples of  pseudo-geometrically hyperbolic varieties. 
For example,  if $X$ is a proper Brody hyperbolic variety over $\CC$, then   $X$ is geometrically hyperbolic over $\CC$ (see      \cite[Theorem~5.3.10]{KobayashiBook}), and even   algebraically hyperbolic \cite{Dem97}. In the non-compact case, if $X$ is a smooth affine variety which admits a hyperbolic embedding \cite[Chapter~3.3]{KobayashiBook}, then $X$ is geometrically hyperbolic (see \cite[Theorem~1.7]{JLevin}). 
 Furthermore, if $X$ is a smooth projective connected surface of general type with $c_1^2>c_2$,  
then  the projective surface $X$ is pseudo-algebraically hyperbolic  (and thus pseudo-geometrically hyperbolic) by Bogomolov's theorem \cite{Bogomolov}.
Finally, examples of geometrically hyperbolic moduli spaces of polarized varieties are given in \cite{JLitt, JSZ}.

 \subsection{Birational invariance}
 The notions of being special, Brody-special, and arithmetically-special are ``birational''.  We now show  that the same holds for being geometrically-special (see Lemma \ref{lem:surj}).
\begin{lemma}[Going up]\label{lem:surj-1}
Let $X\to Y$ be a proper birational morphism of   varieties over $k$. If $Y$ is geometrically-special over $k$, then $X$ is geometrically-special.
\end{lemma}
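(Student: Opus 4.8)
The statement to prove is Lemma \ref{lem:surj-1}: if $\pi\colon X\to Y$ is a proper birational morphism of varieties over $k$ and $Y$ is geometrically-special, then $X$ is geometrically-special. The plan is to transport a covering set on $Y$ up to $X$ by lifting each curve along the birational morphism, after first arranging matters so that the base point and the curves avoid the (small) locus where $\pi$ fails to be an isomorphism.

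\emph{Step 1: set-up on $Y$.} Let $U\subset X$ be a dense open subset. Let $V\subset Y$ be a dense open subset over which $\pi$ restricts to an isomorphism $\pi^{-1}(V)\xrightarrow{\sim} V$; such $V$ exists since $\pi$ is proper and birational, and moreover $\pi$ is an isomorphism over $V$ with $\pi^{-1}(V)$ open in $X$. Shrinking, we may assume $\pi^{-1}(V)\subset U$, so $W:=\pi^{-1}(V)$ is a dense open of $X$ contained in $U$ and $\pi|_W\colon W\to V$ is an isomorphism. Since $Y$ is geometrically-special over $k$, applied to the dense open $V\subset Y$ we obtain a smooth quasi-projective connected curve $C$ over $k$, a point $c\in C(k)$, a point $v\in V(k)$, and a covering set $\{g_i\colon (C,c)\to (Y,v)\}_{i=1}^\infty$, i.e., the closure of $\bigcup_i \Gamma_{g_i}$ in $C\times Y$ is all of $C\times Y$.

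\emph{Step 2: lift the curves to $X$.} Each $g_i\colon C\to Y$ is a morphism from a smooth curve; since $\pi$ is proper and birational, the rational map $\pi^{-1}\circ g_i\colon C\dashrightarrow X$ is defined away from a finite set of points of $C$, and because $C$ is a smooth curve it extends (uniquely) to a morphism $f_i\colon C\to X$ with $\pi\circ f_i=g_i$ — this is the valuative criterion of properness applied at each point of $C$, using that over a dense open of $C$ the map $g_i$ lands in $V$ where $\pi$ is an isomorphism. Moreover $g_i(c)=v\in V$, so $f_i(c)=(\pi|_W)^{-1}(v)=:u\in W(k)\subset U(k)$, and this point $u$ is independent of $i$. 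Thus $\{f_i\colon (C,c)\to (X,u)\}_{i=1}^\infty$ is a sequence of morphisms all sending $c$ to the same $u\in U(k)$.

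\emph{Step 3: density of the graphs.} It remains to check that $\bigcup_i \Gamma_{f_i}$ is dense in $C\times X$. The morphism $\mathrm{id}_C\times\pi\colon C\times X\to C\times Y$ is proper, surjective and birational (it is an isomorphism over $C\times V$), and it carries $\Gamma_{f_i}$ onto $\Gamma_{g_i}$. Hence the closure of $\bigcup_i\Gamma_{f_i}$ surjects, via a proper map, onto the closure of $\bigcup_i\Gamma_{g_i}=C\times Y$; so the closure of $\bigcup_i\Gamma_{f_i}$ is a closed subset of $C\times X$ dominating $C\times Y$. Since $\mathrm{id}_C\times\pi$ is birational and $C\times X$ is irreducible, the only closed subset of $C\times X$ dominating $C\times Y$ is $C\times X$ itself; alternatively, observe directly that $\bigcup_i\Gamma_{f_i}$ contains $\bigcup_i (\mathrm{id}_C\times\pi|_W)^{-1}(\Gamma_{g_i}\cap(C\times V))$, whose closure already contains the dense open $C\times W$, hence equals $C\times X$. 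Either way, $\{f_i\}$ is a covering set for $X$ adapted to $U$, and since $U$ was an arbitrary dense open subset of $X$, this proves that $X$ is geometrically-special over $k$. \qed

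\emph{Main obstacle.} The only genuinely nontrivial point is the extension of the rational maps $\pi^{-1}\circ g_i$ to honest morphisms $C\to X$ in Step 2; this is where properness of $\pi$ is essential, and one must be a little careful that the extended $f_i$ still send the base point $c$ to the \emph{same} point $u$ — which is automatic here because $v$ was chosen in the isomorphism locus $V$. The density verification in Step 3 is then purely formal, amounting to the fact that a proper birational map cannot shrink an irreducible variety's dense subset.
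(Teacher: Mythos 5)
Your proof is correct and follows essentially the same route as the paper: pick the base point in the locus where the birational map is an isomorphism, lift each curve of the covering set along the proper morphism via the valuative criterion, and note that density of the graphs is preserved. Your write-up is in fact slightly more careful than the paper's about handling the arbitrary dense open $U\subset X$ and about verifying the density in Step 3, but the underlying argument is identical.
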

\begin{proof} 
Let $y\in Y$ be  a point not contained in the exceptional locus of $X\to Y$ for which we can choose a pointed curve $(C,c)$ and a covering set  $f_i:(C,c)\to (Y,y)$.  Let $x\in X$ be the unique point of $X$ lying over $y$. Then, as $y$ is not contained in the exceptional locus of the proper morphism $X\to Y$, we have that each $f_i$ lifts uniquely to a map $g_i:(C,c)\to (X,x)$. It is clear that  these maps form a covering set for $(X,x)$, i.e., the union of the graphs $\Gamma_{g_i}$ in $C\times X$ is dense in $X$. Clearly, this implies that $X$ is geometrically-special over $k$. 
\end{proof}

 \begin{lemma}[Going down]\label{lem:surj0}
 Let $f:X\to Y$ be a surjective morphism of  varieties over $k$.  If $X$ is geometrically-special over $k$, then $Y$ is geometrically-special over $k$.
 \end{lemma}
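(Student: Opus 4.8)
The plan is to reduce the statement to the definition and chase a covering set through the surjection $f\colon X\to Y$. Fix a dense open $V\subset Y$; we must produce a pointed curve $(D,d)$, a point $v\in V(k)$, and a covering set $g_i\colon(D,d)\to(Y,v)$. Since $f$ is dominant, $U:=f^{-1}(V)$ is a dense open of $X$, so by geometric-specialness of $X$ there is a smooth affine connected pointed curve $(C,c)$, a point $u\in U(k)$, and a covering set $f_i\colon(C,c)\to(X,u)$, i.e.\ $\overline{\bigcup_i\Gamma_{f_i}}=C\times X$ in $C\times X$. Set $v:=f(u)\in V(k)$ and $g_i:=f\circ f_i\colon(C,c)\to(Y,v)$. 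The claim is that $(g_i)_i$ is a covering set for $Y$, which then gives geometric-specialness of $Y$ (taking $(D,d)=(C,c)$).

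First I would verify the density of $\bigcup_i\Gamma_{g_i}$ in $C\times Y$. Consider the morphism $\mathrm{id}_C\times f\colon C\times X\to C\times Y$; it is surjective, hence (being a morphism of varieties over a field, so of finite type) it is dominant, and moreover it is closed if $f$ is proper — but in general we only have dominance, which already suffices. Under this map $\Gamma_{f_i}\subset C\times X$ maps onto $\Gamma_{g_i}\subset C\times Y$, since $(\mathrm{id}_C\times f)(c',f_i(c'))=(c',f(f_i(c')))=(c',g_i(c'))$ for $c'\in C$. Therefore $(\mathrm{id}_C\times f)\bigl(\bigcup_i\Gamma_{f_i}\bigr)=\bigcup_i\Gamma_{g_i}$. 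Now $\bigcup_i\Gamma_{f_i}$ is dense in $C\times X$, and the image of a dense subset under a dominant morphism of varieties is dense (the image of a dominant morphism contains a dense open, and the image of a dense set meets every nonempty open); applying this to $\mathrm{id}_C\times f$ shows $\bigcup_i\Gamma_{g_i}$ is dense in $C\times Y$. Hence $(g_i)_i$ is a covering set pointed at $v\in V(k)$, and since $V\subset Y$ was an arbitrary dense open, $Y$ is geometrically-special over $k$.

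One mild wrinkle: the definition of geometrically-special requires the curve $C$ to be affine (quasi-projective), which is preserved since we reuse the same $C$; and it requires $v\in V(k)$ — here $v=f(u)$ with $u\in f^{-1}(V)(k)$, so indeed $v\in V(k)$. There is essentially no obstacle: the only point to be slightly careful about is the elementary topological fact that a dominant morphism of (irreducible, finite-type) $k$-schemes sends dense subsets to dense subsets, which is where I would spend a sentence rather than hand-waving. If $f$ is additionally assumed proper the argument is even cleaner because $\mathrm{id}_C\times f$ is then closed and one can argue with closures directly, but properness is not needed for the stated lemma. I expect the ``hard part'' to be merely notational bookkeeping; the content is entirely formal.
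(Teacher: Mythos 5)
Your proposal is correct and follows essentially the same route as the paper: pull back the dense open along $f$, take a covering set for $X$ pointed in the preimage, and compose with $f$ to obtain a covering set for $Y$; the paper simply asserts the density of the composed graphs, whereas you spell out the (correct) elementary argument via surjectivity of $\mathrm{id}_C\times f$.
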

 \begin{proof}
 Let $V\subset Y$ be a dense open subset, and let $U:=f^{-1}(V)$. Since $X$ is geometrically-special over $k$, there is a pointed curve $(C,c)$ over $k$, a point $u$ in $U(k)$, and a covering set   $f_i:(C,c)\to (X,x)$. Define $v:= f(u)$, and  $g_i:= f\circ f_i$. Then $g_i(c) = v$ and $C\times Y$ is covered by the graphs of the maps $g_i$, so that the $g_i:(C,c)\to (Y,v)$ form a covering set for $Y$.  We conclude that $Y$ is geometrically-special over $k$, as required.
 \end{proof}
 
\begin{lemma}\label{lem:surj} Let $X\dashrightarrow Y$ be a dominant rational  map of proper   varieties over $k$. Then the following statements hold.
\begin{enumerate}
\item If $X$ is special, then $Y$ is special.
\item Assume $k=\CC$. If $X$ is Brody-special, then $Y$  is Brody-special.
\item If $X$ is arithmetically-special over $k$, then $Y$ is arithmetically-special over $k$.
\item If $X$ is geometrically-special over $k$, then $Y$ is geometrically-special over $k$.
\end{enumerate} 
\end{lemma}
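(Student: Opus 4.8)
The plan is to prove the four statements of Lemma~\ref{lem:surj} by reducing the case of a dominant rational map to the case of a surjective morphism, for which (1)--(4) are either standard or already established. The key observation is that a dominant rational map $\varphi\colon X\dashrightarrow Y$ of proper varieties can be resolved: there is a smooth projective variety $X'$, a proper birational morphism $p\colon X'\to X$, and a surjective morphism $q\colon X'\to Y$ with $q = \varphi\circ p$ on the locus where $\varphi$ is defined. This is classical (resolution of indeterminacy / Hironaka, in characteristic zero). The strategy is then: push the relevant property \emph{up} from $X$ to $X'$ along $p$, and then push it \emph{down} from $X'$ to $Y$ along $q$.

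For statement~(4), the one genuinely new to this paper, I would argue as follows. First, by Lemma~\ref{lem:surj-1} (Going up), since $X$ is geometrically-special over $k$ and $p\colon X'\to X$ is a proper birational morphism, $X'$ is geometrically-special over $k$. (Note that Lemma~\ref{lem:surj-1} requires a proper \emph{morphism} $X'\to X$, which is exactly what resolution of indeterminacy provides; it does not apply directly to $\varphi$ itself since $\varphi$ is only a rational map.) Then, since $q\colon X'\to Y$ is a surjective morphism of varieties over $k$ and $X'$ is geometrically-special, Lemma~\ref{lem:surj0} (Going down) gives that $Y$ is geometrically-special over $k$. Statements~(1), (2), and (3) follow by the same two-step scheme, invoking the corresponding "going up" and "going down" facts for specialness, Brody-specialness, and arithmetic-specialness respectively; these are all standard and are recalled (or are immediate) in the references cited in the introduction (for instance, specialness is a birational invariant by \cite[Proposition~9.18]{Ca04} type results, Brody-specialness goes down along surjections since the image of a dense entire curve is a dense entire curve, and arithmetic-specialness goes down since the image of a dense set of $K$-rational points under a $K$-morphism is dense). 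One should note that "going up" for these classical notions uses only birational equivalence (equality of function fields), not a morphism, so in fact for (1)--(3) one can even dispense with the resolution and argue purely birationally; I would, however, present all four uniformly via the resolution $X' \to X \to$, $X' \to Y$ to keep the proof streamlined.

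The only mild subtlety — and the step I expect to require the most care — is the compatibility of basepoints in statement~(4): Lemma~\ref{lem:surj-1} produces a covering set through a point $x'\in X'$ lying over a chosen $x\in X$ \emph{not in the exceptional locus}, and then Lemma~\ref{lem:surj0} pushes it to the point $q(x')\in Y$. To verify geometric-specialness of $Y$ one must, for a given dense open $V\subset Y$, produce a covering set through a point of $V$. This is handled by choosing, at the outset, the dense open $U\subset X$ in the definition of geometric-specialness of $X$ to be contained in $p(X'\setminus\mathrm{Exc}(p)) \cap \varphi^{-1}(V)$ (which is dense since $\varphi$ is dominant and the exceptional locus of $p$ is nowhere dense), so that the basepoint $x$ produced lies outside $\mathrm{Exc}(p)$ and maps into $V$; then $x' = p^{-1}(x)$ and $v = q(x') \in V$, as needed. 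With this bookkeeping the two lemmas chain together cleanly, and $C\times Y$ is covered by the graphs $\Gamma_{q\circ g_i}$ where $g_i\colon (C,c)\to (X',x')$ is the covering set obtained from Lemma~\ref{lem:surj-1}.
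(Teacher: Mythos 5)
Your proposal is correct and follows essentially the same route as the paper: resolve the indeterminacy by a proper birational morphism $X'\to X$ with $X'\to Y$ a surjective morphism, then apply Lemma~\ref{lem:surj-1} (going up) and Lemma~\ref{lem:surj0} (going down) for (4), with (1) due to Campana and (2), (3) immediate. Your extra bookkeeping on basepoints is a harmless elaboration of what those two lemmas already handle.
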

\begin{proof} Note that $(1)$ is due to Campana \cite{Ca04}, and that $(2)$ and $(3)$ are obvious. To prove $(4)$, let $Z\to X$ be a proper birational surjective morphism such that the composed rational map $Z\to X\dashrightarrow Y$ is a (proper surjective) morphism. First, as $X$ is geometrically-special over $k$, it follows that $Z$ is geometrically-special (Lemma \ref{lem:surj-1}). Then, as $Z$ is geometrically-special over $k$ and $Z\to Y$ is surjective, we conclude that $Y$ is geometrically-special over $k$ (Lemma \ref{lem:surj0}).
\end{proof}

\begin{corollary}\label{cor:image_is_not_psgeom}
 Let $X\dashrightarrow Y$ be a dominant rational map of proper   varieties. If $\dim Y\geq 1$ and $X$ is geometrically-special over $k$, then $Y$ is not pseudo-geometrically hyperbolic over $k$.  
\end{corollary}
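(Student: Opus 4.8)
The plan is to reduce the statement to one about $Y$ alone and then read off a contradiction from the dimension count implicit in the definition of geometric-specialness. First I would apply Lemma \ref{lem:surj}$(4)$: since $X$ is geometrically-special over $k$ and $X \dashrightarrow Y$ is a dominant rational map of proper varieties, $Y$ is itself geometrically-special over $k$. It therefore suffices to prove that a proper geometrically-special variety $Y$ over $k$ with $\dim Y \geq 1$ cannot be pseudo-geometrically hyperbolic over $k$.

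Next I would argue by contradiction. Assume $Y$ is pseudo-geometrically hyperbolic, and let $Z \subsetneq Y$ be a proper closed subset witnessing this in the sense of Definition \ref{def:psgeomhyp}. Set $U := Y \setminus Z$, a dense open subset of $Y$. Applying the definition of geometric-specialness to $U$ — and using that $Y$ is proper, so that by the remark following Definition \ref{def:geom} the test curve may be taken projective — there is a smooth projective connected pointed curve $(C,c)$ over $k$, a point $u \in U(k)$, and a covering set $f_i : (C,c) \to (Y,u)$; that is, the closure of $\bigcup_{i=1}^{\infty} \Gamma_{f_i}$ in $C \times Y$ equals $C \times Y$.

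Now I would run the dimension count. Each graph $\Gamma_{f_i} \subset C \times Y$ is isomorphic to $C$ via the first projection, hence is one-dimensional. If the set $\{f_i : i \geq 1\}$ were finite, then $\bigcup_{i} \Gamma_{f_i}$ would be a finite union of curves and its closure would have dimension at most $1$; but $\dim(C \times Y) = 1 + \dim Y \geq 2$, so this closure could not be all of $C \times Y$, contradicting the covering property. Hence $\{f_i : i \geq 1\}$ is an infinite subset of $\Hom_k((C,c),(Y,u))$. On the other hand, $u \in U(k) = Y(k) \setminus Z$, so by the defining property of pseudo-geometric hyperbolicity the set $\Hom_k((C,c),(Y,u))$ is finite — a contradiction. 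This proves $Y$ is not pseudo-geometrically hyperbolic over $k$, and hence the corollary.

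I do not anticipate a genuine obstacle: both ingredients — the birational going-up/going-down package (Lemma \ref{lem:surj}) and the elementary dimension estimate — are already available. The only points demanding a little care are the reduction to a projective test curve $C$ (which is exactly the remark recorded just after Definition \ref{def:geom}) and the observation that, to show $Y$ is \emph{not} pseudo-geometrically hyperbolic, one must rule out every candidate exceptional locus $Z$; this is handled uniformly above, since $Z$ enters the argument only through the choice of the dense open set $U = Y \setminus Z$.
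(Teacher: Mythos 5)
Your proposal is correct and follows the same route as the paper: apply Lemma \ref{lem:surj}(4) to conclude $Y$ is geometrically-special, then observe that a positive-dimensional geometrically-special variety cannot be pseudo-geometrically hyperbolic. The paper states this last observation without proof, whereas you spell out the (correct) dimension count showing the covering set must be infinite, contradicting the finiteness of $\Hom_k((C,c),(Y,u))$ at a point $u$ outside the exceptional locus.
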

\begin{proof}
By Lemma \ref{lem:surj}.(4) we have that $Y$ is geometrically-special. This proves the corollary, as a positive-dimensional geometrically-special variety is  not pseudo-geometrically hyperbolic.
\end{proof}

\subsection{Finite \'etale maps}  The fact that specialness is preserved under taking finite \'etale covers was proven by Campana. The corresponding result for geometrically-special varieties requires  a well-known finiteness result for finite \'etale covers of varieties. Note that, in this paper,  the following   finiteness result is only used in the case that $X$ is a curve (in which case it is easier to prove); we include the more general statement for the sake of future reference. 

\begin{lemma}[Finiteness]\label{lem:finiteness}
Let $d\geq 1$ be an integer, and let $X$ be a variety over $k$. Then, the set of $X$-isomorphism classes of finite \'etale morphisms $Y\to X$ of degree $d$ is finite.
\end{lemma}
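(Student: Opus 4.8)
The plan is to reduce the statement to a finiteness statement about continuous homomorphisms out of the \'etale fundamental group of $X$, and then deduce that finiteness from the topological finite generation of $\pi_1^{\et}(X)$. Since a variety over $k$ is integral, $X$ is connected; fix a geometric point $\overline{x}$ of $X$. By Grothendieck's theory of the \'etale fundamental group (SGA~1), the fiber functor $Y\mapsto Y_{\overline{x}}$ induces an equivalence between the category of finite \'etale $X$-schemes and the category of finite continuous $\pi_1^{\et}(X,\overline{x})$-sets. Under this equivalence, finite \'etale $X$-schemes of degree $d$ correspond to continuous $\pi_1^{\et}(X,\overline{x})$-sets of cardinality $d$, and $X$-isomorphism classes of such schemes correspond exactly to $S_d$-conjugacy classes of continuous homomorphisms $\rho\colon \pi_1^{\et}(X,\overline{x})\to S_d$, where $S_d=\Aut(\{1,\dots,d\})$. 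Thus it suffices to prove that the set $\Hom_{\mathrm{cont}}(\pi_1^{\et}(X,\overline{x}),S_d)$ is finite.

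For this I would use that $\pi_1^{\et}(X,\overline{x})$ is topologically finitely generated. Suppose first $k=\CC$. Then the complex-analytic space $X^{\an}$ admits a finite triangulation (Lojasiewicz, Hironaka), hence has the homotopy type of a finite CW complex; in particular the topological fundamental group $\pi_1(X^{\an})$ is finitely generated, and since $\pi_1^{\et}(X)$ is its profinite completion it is topologically finitely generated. For general $k$, the variety $X$ is defined over a subfield $k_0\subset k$ which is finitely generated over $\QQ$; let $\overline{k_0}\subset k$ be its algebraic closure. By the invariance of the \'etale fundamental group under extension of algebraically closed fields of characteristic zero, the natural map $\pi_1^{\et}(X)\to \pi_1^{\et}(X_{\overline{k_0}})$ is an isomorphism; choosing an embedding $\overline{k_0}\hookrightarrow \CC$ (possible since $k_0$ embeds in $\CC$), the same invariance gives $\pi_1^{\et}(X_{\overline{k_0}})\cong \pi_1^{\et}(X_{\overline{k_0}}\otimes_{\overline{k_0}}\CC)$, which is topologically finitely generated by the complex case. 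Hence $\pi_1^{\et}(X,\overline{x})$ is topologically finitely generated in all cases.

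Finally, a continuous homomorphism $\rho\colon \pi_1^{\et}(X,\overline{x})\to S_d$ from a profinite group to a finite group has open kernel, and since $\pi_1^{\et}(X,\overline{x})$ is topologically generated by finitely many elements $\gamma_1,\dots,\gamma_r$, the homomorphism $\rho$ is completely determined by the tuple $(\rho(\gamma_1),\dots,\rho(\gamma_r))\in S_d^{\,r}$, which ranges over a finite set. Therefore $\Hom_{\mathrm{cont}}(\pi_1^{\et}(X,\overline{x}),S_d)$ is finite, a fortiori so is the set of $S_d$-conjugacy classes, and consequently there are only finitely many $X$-isomorphism classes of finite \'etale morphisms $Y\to X$ of degree $d$. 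The only real content is the topological finite generation of $\pi_1^{\et}(X)$ for a possibly singular, possibly non-proper variety over an arbitrary algebraically closed field of characteristic zero; everything else is formal, being a matter of unwinding the Galois-category dictionary. In the case actually needed in this paper, namely $X$ a curve, this obstacle disappears: a smooth connected curve over $\CC$ is a compact Riemann surface with finitely many points removed, so its fundamental group is free of finite rank or a surface group, and the general case follows by normalization together with the standard comparison of \'etale fundamental groups under finite birational morphisms.
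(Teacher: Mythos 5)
Your proof is correct and follows essentially the same route as the paper: reduce to $k=\CC$ by a Lefschetz-principle argument and invoke finite generation of the fundamental group, which bounds degree-$d$ covers via homomorphisms to $S_d$. The paper merely compresses this, citing SGA~7 (Exposé~II, Théorème~2.3.1) for the finite generation of the topological $\pi_1(X^{\an})$ and leaving the Galois-correspondence bookkeeping implicit, whereas you phrase it through $\pi_1^{\et}$ and triangulability; the content is the same.
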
 
\begin{proof} We may and do assume that  $k=\CC$. Then, the result follows from the fact that  the (topological) fundamental group $\pi_1(X)$ of $X$ is finitely generated (see \cite[Expos\'e~II, Th\'eor\`eme~2.3.1]{SGA7I}).
\end{proof}

\begin{lemma}\label{lem:cw_geom_special} Let $\pi\colon X\to Y$ be a finite \'etale morphism of   varieties over $k$. Then $X$ is geometrically-special over $k$ if and only if $Y$ is geometrically-special over $k$.
\end{lemma}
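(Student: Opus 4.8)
The plan is to prove both implications. The direction ``$X$ geometrically-special $\Rightarrow$ $Y$ geometrically-special'' is immediate from Lemma \ref{lem:surj0}, since a finite étale morphism is in particular surjective. So the content is in the converse: assuming $Y$ is geometrically-special, produce a covering set for $X$ meeting a prescribed dense open $U \subset X$. First I would replace $U$ by a smaller dense open so that its image $V := \pi(U) \subset Y$ is open and $\pi^{-1}(V)$ decomposes nicely; since $Y$ is geometrically-special, there is a smooth affine connected pointed curve $(C,c)$, a point $v \in V(k)$, and a covering set $\{f_i : (C,c) \to (Y,v)\}_{i=1}^\infty$, i.e.\ the graphs $\Gamma_{f_i}$ are dense in $C \times Y$. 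The natural idea is to lift each $f_i$ through the finite étale cover $\pi$; the obstruction is that an individual $f_i$ need not lift to $X$, and even when it does the lifts may not all pass through a common point of $U$ lying over $v$.

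The key device to handle this is the finiteness Lemma \ref{lem:finiteness} applied to the curve $C$ (the case the remark says is the only one actually needed). For each $i$, form the pullback $C_i := C \times_{f_i, Y, \pi} X$, which is a finite étale cover of $C$ of degree $d = \deg \pi$; choosing a point $u \in U(k)$ with $\pi(u) = v$ gives, for each $i$, a point $c_i \in C_i(k)$ over $c$ (lying in the fibre over $u$). By Lemma \ref{lem:finiteness} there are only finitely many $X$-isomorphism classes — in fact finitely many $C$-isomorphism classes — of degree-$d$ finite étale covers of $C$, so after passing to an infinite subsequence we may assume all the pointed covers $(C_i, c_i)$ are isomorphic to a single fixed pointed finite étale cover $(C', c') \to (C,c)$. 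Here I would be a little careful: the pointed isomorphism type involves a choice of point in the fibre over $c$, and one should either first pass to a connected component of $C_i$ through $c_i$ (replacing $C_i$ by that component, still finite étale over $C$ of some bounded degree $\le d$) and then invoke finiteness, so that $(C',c')$ is a connected pointed smooth affine curve. After this normalization, each $f_i$ composed with the canonical map $C' = C_i \to X$ gives a morphism $g_i : (C', c') \to (X, u)$, and $g_i(c') = u$ by construction.

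It remains to check that $\{g_i\}$ is a covering set for $X$, i.e.\ that $\bigcup_i \Gamma_{g_i}$ is dense in $C' \times X$. Consider the finite étale morphism $\Phi := (\mathrm{id} \times \pi) : C' \times X \to C' \times Y$ precomposed appropriately — more precisely, write $p : C' \to C$ for the covering and note $\Gamma_{g_i} \subset C' \times X$ maps under $(p \times \pi)$ into $\Gamma_{f_i} \times_{?}$; the clean statement is that $(p \times \mathrm{id}_Y)$ pulls $\Gamma_{f_i}$ back to a union of graphs over $C'$, one of which is $\Gamma_{f_i \circ p}$, and $\Gamma_{g_i}$ is a connected component of $(\mathrm{id}_{C'} \times \pi)^{-1}(\Gamma_{f_i \circ p})$. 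Since $p \times \mathrm{id}$ and $\mathrm{id} \times \pi$ are finite surjective (hence closed, and open), density of $\bigcup_i \Gamma_{f_i}$ in $C \times Y$ forces density of $\bigcup_i (p\times\mathrm{id})^{-1}\Gamma_{f_i} = \bigcup_i \Gamma_{f_i \circ p}$ in $C' \times Y$, and then density of its full preimage $\bigcup_i (\mathrm{id}\times\pi)^{-1}\Gamma_{f_i\circ p}$ in $C' \times X$. Finally, by construction every $\Gamma_{g_i}$ is exactly one component of $(\mathrm{id}\times\pi)^{-1}\Gamma_{f_i\circ p}$, but the other components are of the form $\Gamma_{g}$ for the finitely many ($\le d$) other lifts $g$; since the total preimage is dense and it is a finite union of graph-type subsets indexed over $i$, a pigeonhole over the (finitely many) components shows some ``lift pattern'' occurs for infinitely many $i$ with dense union — and that infinite subfamily of $g_i$'s (after relabelling, all landing at the same base point $u$, which we have already arranged) is the desired covering set. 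Hence $X$ is geometrically-special over $k$.

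The main obstacle, and the step requiring the most care, is the bookkeeping in the last paragraph: ensuring that after all the subsequence extractions the maps $g_i$ still share a single base point $u \in U(k)$ and that a genuinely \emph{dense} subfamily survives. This is exactly what Lemma \ref{lem:finiteness} is designed to make work — it converts ``infinitely many covers of $C$'' into ``infinitely many copies of one fixed cover $(C',c')$'' — so once the reduction to a single $(C',c')$ is in place, the density transfer is formal from the fact that finite étale (indeed finite surjective) morphisms are open and closed.
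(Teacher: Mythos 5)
Your overall route is the same as the paper's: the forward direction via Lemma \ref{lem:surj0}, and for the converse, pulling back each $f_i$ along $\pi$, invoking Lemma \ref{lem:finiteness} to pass to a subsequence with a single fixed pointed cover $(C',c')\to (C,c)$, and taking the induced maps $g_i\colon (C',c')\to (X,u)$. The gap is in your final density verification. You claim that every connected component of $(\mathrm{id}_{C'}\times\pi)^{-1}(\Gamma_{f_i\circ p})\cong C'\times_{Y}X$ is the graph of a lift of $f_i\circ p$. That is false in general: this fibre product is a finite \'etale cover of $C'$ of degree $\deg\pi$, and its components can be multisections of degree $>1$ over $C'$. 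For instance, if $\pi$ is not Galois and $C_i=C\times_Y X$ is connected, then $C'=C_i$ and $C'\times_Y X\cong C'\times_C C'$ decomposes as the diagonal (which is $\Gamma_{g_i}$) plus a component of degree $\deg\pi-1>1$ over $C'$, which is not a graph; $f_i\circ p$ then has only one lift. Consequently your pigeonhole over ``lift patterns'' does not deliver what you need: since $C'\times X$ is irreducible, a finite union of closed sets covering it must have one member equal to everything, but \emph{a priori} that member could be the closure of the non-graph components, and the argument as written would not show that the graphs $\Gamma_{g_i}$ through your chosen point $u$ are dense.

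The statement you want is nonetheless true, and there is a short argument that also makes the pigeonhole and the lift-pattern bookkeeping unnecessary. The morphism $p\times\pi\colon C'\times X\to C\times Y$ is finite, hence closed, and since $\pi\circ g_i=f_i\circ p$ it maps $\Gamma_{g_i}$ onto $\Gamma_{f_i}$. So if $Z$ denotes the closure of $\bigcup_i\Gamma_{g_i}$ in $C'\times X$, then $(p\times\pi)(Z)$ is closed and contains the dense set $\bigcup_i\Gamma_{f_i}$, hence equals $C\times Y$. As $p\times\pi$ is finite, $\dim Z=\dim(C\times Y)=\dim(C'\times X)$, and since $C'\times X$ is irreducible ($C'$ is connected and smooth), $Z=C'\times X$. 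This works for the specific lifts $g_i$ through the point $u\in U$ you fixed at the outset, so your base-point bookkeeping (fixing $u$ over $v$ and taking $c_i=(c,u)$, then pointed isomorphism classes) is fine and no shrinking of $U$ beyond $V=\pi(U)$ is actually needed. With this replacement for the last paragraph, your proof is correct and coincides with the paper's (which leaves this density check implicit).
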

\begin{proof}  Assume that $X$ is geometrically-special over $k$.
Since $\pi:X\to Y$ is surjective, it follows from  Lemma \ref{lem:surj} that $Y$ is geometrically-special over $k$.

 Conversely, assume that $Y$ is geometrically-special over $k$.  Let $U\subset X$ be a dense open subset such that $\pi|_U:U\to \pi(U)$ is finite \'etale of degree $\deg \pi$. Let $y$ be a point in  the open subset $\pi(U)$ of $Y$ for which we can choose a covering set $f_i:(C,c)\to (Y,y)$ with $i=1,2,\ldots $.
For each $i$, consider the Cartesian diagram
\[
\xymatrix{ D_i  \ar[d]_{p_i} \ar[rr] & & X \ar[d]^{\pi} \\ C \ar[rr]_{f_i} & & Y}
\]
Since $p_i$ is a finite \'etale (surjective) morphism of degree at most $\deg(\pi)$, the set of  isomorphism classes of the curves $D_i$ is finite (Lemma \ref{lem:finiteness}). Thus, replacing the $f_i$ by a suitable subsequence if necessary, we see that   there is  a point $x$ in $\pi^{-1}\{y\}$, a smooth connected pointed curve $(D,d)$ over $k$,  and a covering set  $g_i:(D,d)\to (X,x)$. 

Now, to show that $X$ is geometrically-special over $k$, let $U\subset X$ be a dense open. Choose a dense open $U'\subset U$  such that $\pi_{U'}:U'\to \pi(U')$ is finite \'etale of degree $\deg \pi$. Then, we have shown that there is a point $x$ in $U'\subset U$, a smooth connected pointed curve $(D,d)$ over $k$, and a covering set  $g_i:(D,d)\to (X,x)$.  This concludes the proof. 
\end{proof}

Lemma \ref{lem:cw_geom_special} fits in well with Campana's conjectures, as the following proposition shows (included for the sake of completeness).
 
\begin{proposition}\label{thm:fin_et} Let $Y\to X$ be a finite \'etale morphism of   proper varieties over $k$.
Then the following statements hold.
\begin{enumerate}
\item The variety $Y$ is special over $k$ if and only if $X$ is special over $k$.
\item If $k=\CC$, then   $Y$ is Brody-special   if and only if $X$ is Brody-special.
\item   $Y$ is arithmetically-special over $k$ if and only if $X$ is arithmetically-special over $k$.
\item     $Y$ is geometrically-special over $k$ if and only if $X$ is geometrically-special over $k$.
\end{enumerate}
\end{proposition}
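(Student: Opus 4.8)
The plan is to establish the four equivalences separately, and in each case one implication is immediate: since a finite étale morphism $\pi\colon Y\to X$ of proper varieties is in particular a surjective (hence dominant) morphism, Lemma \ref{lem:surj} applied to $\pi$ shows that each of the four properties, if enjoyed by $Y$, is inherited by $X$. It therefore remains in each case to prove the reverse implication. Statement $(1)$ in both directions is a theorem of Campana recorded in \cite{Ca04}, so for $(1)$ I would simply cite his result; and $(4)$ in both directions is precisely Lemma \ref{lem:cw_geom_special} applied to $\pi\colon Y\to X$ (with the roles of the two varieties interchanged relative to the statement there), so there is nothing further to do in that case.

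For $(2)$, assume $X$ is Brody-special, witnessed by a Zariski-dense entire curve $g\colon \CC\to X^{\an}$. As $\pi$ is finite étale, the analytification $\pi^{\an}\colon Y^{\an}\to X^{\an}$ is a finite topological covering map which is a local isomorphism of complex spaces, and $X^{\an}$ is connected; since $\CC$ is simply connected, $g$ lifts to a holomorphic map $\widetilde g\colon \CC\to Y^{\an}$ with $\pi^{\an}\circ\widetilde g=g$. Let $W\subseteq Y$ be the Zariski closure of $\widetilde g(\CC)$. Then $\pi(W)$ is closed (as $\pi$ is finite) and contains $g(\CC)$, so $\pi(W)=X$; as $\pi$ is quasi-finite this gives $\dim W\geq\dim X=\dim Y$, and since $Y$ is irreducible (varieties being irreducible by our convention) we conclude $W=Y$. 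Hence $\widetilde g$ is Zariski-dense and $Y$ is Brody-special.

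For $(3)$, assume $X$ is arithmetically-special over $k$, so there is a finitely generated subfield $K\subset k$ with $K/\QQ$ finitely generated and a model $\mathcal X$ of $X$ over $K$ with $\mathcal X(K)$ Zariski-dense in $X$. After enlarging $K$ inside $k$ we may spread out $\pi$ to a finite étale morphism $\mathcal Y\to\mathcal X$ over $K$ recovering $Y\to X$ upon base change to $k$. By the Chevalley--Weil theorem (over finitely generated fields of characteristic zero) there is a finite extension $L/K$, which we may embed into $k$ over $K$ since $k$ is algebraically closed, such that $\mathcal X(K)$ is contained in the image of $\mathcal Y_L(L)$, where $\mathcal Y_L:=\mathcal Y\otimes_K L$. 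Then $\mathcal Y_L(L)$ surjects onto the Zariski-dense subset $\mathcal X(K)$ of $X$, and the argument used in $(2)$ (again using that $\pi$ is finite and $Y$ is irreducible) shows $\mathcal Y_L(L)$ is Zariski-dense in $Y$. Since $L/\QQ$ is finitely generated and $\mathcal Y_L$ is a model of $Y$ over $L$, this proves $Y$ is arithmetically-special over $k$.

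The main obstacle is isolated in $(3)$: it rests on the Chevalley--Weil theorem in the generality of finitely generated fields rather than number fields, which I would invoke as a known input (it follows from the classical number-field statement, using that our varieties are proper so that all rational points extend to integral points on suitable models, together with a specialization argument). The remaining content is imported: $(1)$ is Campana's theorem and $(4)$ is Lemma \ref{lem:cw_geom_special}, while $(2)$ is a straightforward application of covering-space lifting.
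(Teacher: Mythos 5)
Your proposal is correct and follows essentially the same route as the paper's own (very brief) proof: the paper cites Campana \cite[Theorem~5.12]{Ca04} for $(1)$, declares $(2)$ obvious (your covering-space lifting is exactly the implicit argument), invokes a standard Chevalley--Weil argument over finitely generated fields (citing \cite[Lemma~8.2]{JLitt}) for $(3)$, and deduces $(4)$ from Lemma \ref{lem:cw_geom_special}. You merely spell out the details the paper leaves implicit, so there is nothing to object to.
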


\begin{proof}  Note that $(1)$ follows from   \cite[Theorem~5.12]{Ca04}, and that  $(2)$ is obvious. Moreover, a standard ``Chevalley-Weil'' type argument shows that $X$ is arithmetically-special over $k$ if and only if $Y$ is arithmetically-special over $k$ (see for example \cite[Lemma~8.2]{JLitt}). This proves $(3)$.   Finally, $(4)$ follows from (the more general) Lemma \ref{lem:cw_geom_special}.
\end{proof}

\subsection{Products} It is not hard to see that if $X$ and $Y$ are arithmetically-special over $k$, then $X\times Y$ is arithmetically-special over $k$. Moreover, if $k=\CC$ and $X$ and $Y$ both admit a dense entire curve, say $f:\mathbb{C}\to X^{\an}$ and $g:\mathbb{C}\to X^{\an}$, then $(f,g):\mathbb{C}^2\to X\times Y$ has Zariski dense image. Thus, if  $\mathbb{C}\to \mathbb{C}^2$ is a metrically-dense entire curve, then the composed map $\mathbb{C}\subset \mathbb{C}^2\to X\times Y$ is a dense entire curve. Therefore,   if $X$ and $Y$ are Brody-special, then $X\times Y$ is Brody-special.
Finally, 
if $X$ and $Y$ are smooth projective special varieties over $k$, then $X\times Y$ is special by a theorem of Campana \cite{Ca04}. This ``product property'' also holds for geometrically-special varieties.

\begin{lemma}\label{lem:prod_geom_special} Let $X$ and $Y$ be geometrically-special varieties over $k$. Then $X\times Y$ is a geometrically-special variety over $k$.
\end{lemma}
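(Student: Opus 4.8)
The plan is to reduce to the case in which $X$ and $Y$ admit covering sets parametrized by one and the same pointed curve, and then to paste those covering sets together coordinatewise. So fix an arbitrary dense open $W\subseteq X\times Y$. Since $X$ and $Y$ are varieties over $k$, the first projection $p_X\colon X\times Y\to X$ is flat and of finite presentation, hence open, so $U_X:=p_X(W)$ is a dense open of $X$, and for each $a\in U_X(k)$ the fibre $W_a\subseteq Y$ is a nonempty open, hence (as $Y$ is irreducible) dense in $Y$. Applying geometric-specialness of $X$ to $U_X$ yields a smooth affine connected pointed curve $(C_X,c_X)$, a point $w_X\in U_X(k)$, and a covering set $\{f_i\colon(C_X,c_X)\to(X,w_X)\}_i$; applying geometric-specialness of $Y$ to the dense open $W_{w_X}$ yields $(C_Y,c_Y)$, a point $w_Y\in W_{w_X}(k)$, and a covering set $\{g_j\colon(C_Y,c_Y)\to(Y,w_Y)\}_j$. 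By construction $w:=(w_X,w_Y)\in W$.

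Next I would pass to a common curve. The affine surface $S:=C_X\times C_Y$ contains an irreducible curve $B_0$ through $p:=(c_X,c_Y)$ that is distinct from the two coordinate curves $\{c_X\}\times C_Y$ and $C_X\times\{c_Y\}$ (take, e.g., a component through $p$ of a general hypersurface section of $S$ through $p$), and any such $B_0$ dominates both factors. Let $(B,b)\to(B_0,p)$ be the normalization, with $b$ over $p$; then $B$ is a smooth affine connected curve and the projections induce non-constant morphisms $\pi_X\colon(B,b)\to(C_X,c_X)$ and $\pi_Y\colon(B,b)\to(C_Y,c_Y)$. The key elementary point here is that a covering set pulls back to a covering set along a non-constant morphism $\pi\colon(B,b)\to(C,c)$ of smooth affine curves: such a $\pi$ is flat (the map on local rings is an injection of a domain over a DVR, hence torsion-free) and of finite type, so $\pi\times\mathrm{id}_X$ is open, and since $(\pi\times\mathrm{id}_X)^{-1}(\Gamma_f)=\Gamma_{f\circ\pi}$ for every $f\colon C\to X$, the preimage of the dense set $\bigcup_i\Gamma_{f_i}$ is the dense set $\bigcup_i\Gamma_{f_i\circ\pi}$. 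Thus $\{F_i:=f_i\circ\pi_X\}_i$ and $\{G_j:=g_j\circ\pi_Y\}_j$ are covering sets for $X$ and $Y$, based at $w_X$ and $w_Y$ respectively, both parametrized by $(B,b)$.

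Finally I would paste: set $h_{ij}:=(F_i,G_j)\colon(B,b)\to(X\times Y,w)$. Since $X$ is separated, $\Gamma_{F_i}\times Y$ is closed in $B\times X\times Y$, and under the isomorphism $\Gamma_{F_i}\times Y\cong B\times Y$ the family $\bigcup_j\Gamma_{h_{ij}}$ corresponds to $\bigcup_j\Gamma_{G_j}$, which is dense; hence $\overline{\bigcup_j\Gamma_{h_{ij}}}\supseteq\Gamma_{F_i}\times Y$. Taking the union over $i$, then the closure, and using that the closure of a product is the product of the closures, gives $\overline{\bigcup_{i,j}\Gamma_{h_{ij}}}\supseteq\overline{\bigl(\bigcup_i\Gamma_{F_i}\bigr)\times Y}=(B\times X)\times Y$, so $\{h_{ij}\}_{i,j}$ (reindexed as a single sequence) is a covering set for $X\times Y$ based at $w\in W$; as $W$ was arbitrary, $X\times Y$ is geometrically-special. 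I expect the only genuinely delicate step to be the passage to a common curve — one cannot in general take $C_X$ or $C_Y$ itself, since there need not be a non-constant morphism between them, but a curve inside $C_X\times C_Y$ through $(c_X,c_Y)$ always maps non-constantly to both — together with the routine verification that this base change preserves the covering-set property.
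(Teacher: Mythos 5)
Your proof is correct and is essentially the paper's argument: pair the two covering sets coordinatewise, i.e.\ consider the maps $(g_i,h_j)$, and restrict them to a pointed curve inside $C_1\times C_2$ through $(c_1,c_2)$ (the paper takes a smooth connected very ample divisor there; you take an arbitrary curve through the point dominating both factors and normalize). Your write-up additionally spells out the density verification via $\Gamma_{F_i}\times Y$ and the basepoint bookkeeping for an arbitrary dense open $W\subseteq X\times Y$, details the paper leaves implicit, but the route is the same.
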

\begin{proof} Let $g_{i}:(C_1,c_1)\to (X,x)$ and $h_i:(C_2,c_2)\to (Y,y)$ be a covering set   for $X$ and $Y$, respectively.   If $i\geq 1$ and $j\geq 1$, consider \[f_{i,j}:(C_1\times C_2, (c_c'))\to (X\times Y, (x,y)), \quad f_{i,j}(a,b) = (g_i(a),h_j(b)).\]
Note that the (countable) union of the graphs $\cup_{i,j} \Gamma_{f_{i,j}}$ of the morphisms $f_{i,j}$ is dense in $C_1\times C_2 \times X$.  Let $C\subset C_1\times C_2$ be a smooth connected very ample  divisor containing $(c,c')$. Then each morphism $f_{i,j}|_C:C\to X\times Y$ maps $(c,c') $ to $(x,y)$, and the union of their graphs is dense in $C\times X$. This shows that $X\times Y$ is geometrically-special over $k$.
\end{proof}
 
\subsection{Rationally connected varieties}
A  smooth projective variety over $k$ is \emph{rationally connected} if any two general points $p, q$ on $X$ can be connected by a chain of rational curves (see \cite[Definition~4.3]{DebarreBook})
Campana showed that such a variety is special (see \cite{Ca04}), and a recent theorem of Campana-Winkelmann shows that a rationally connected smooth projective variety is Brody-special.  It is straightforward to verify   that such a variety is geometrically-special, as we show now.

\begin{proposition}
If $X$ is a smooth projective rationally connected variety over $k$, then $X$ is geometrically-special over $k$.
\end{proposition}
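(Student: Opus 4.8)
The plan is to take $C = \mathbb{P}^1$ with marked point $0$ and to realize the covering set demanded by Definition \ref{def:geom} as deformations of a single very free rational curve. Concretely, I would fix a dense open $U \subseteq X$ and, using that $X$ is smooth, projective and rationally connected over the algebraically closed field $k$ of characteristic zero, choose a \emph{very free} morphism $f_0\colon \mathbb{P}^1\to X$ (i.e.\ one with $f_0^\ast T_X \cong \bigoplus_{j=1}^{\dim X}\mathcal{O}_{\mathbb{P}^1}(a_j)$, all $a_j\geq 1$) passing through a general point $u$ of $X$, which I arrange to lie in $U$; such an $f_0$ exists by the standard theory of rationally connected varieties (see \cite[Chapter~4]{DebarreBook}). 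Setting $M := \underline{\Hom}_k((\mathbb{P}^1,0),(X,u))$, the countable union of quasi-projective moduli schemes of morphisms $\mathbb{P}^1\to X$ sending $0$ to $u$, it would then suffice --- exactly as for the universal evaluation map discussed after Definition \ref{def:geom} --- to show that the evaluation morphism
\[
\mathrm{ev}\colon \mathbb{P}^1\times M \longrightarrow \mathbb{P}^1\times X,\qquad (t,f)\longmapsto (t,f(t))
\]
is dominant.

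The key computation is the dominance of $\mathrm{ev}$, which I would check at the point $(t_0,[f_0])$ for a fixed $t_0\in\mathbb{P}^1\setminus\{0\}$. Since $M$ has tangent space $H^0(\mathbb{P}^1,f_0^\ast T_X(-[0]))$ and obstruction space $H^1(\mathbb{P}^1,f_0^\ast T_X(-[0]))$ at $[f_0]$, and since $f_0^\ast T_X(-[0]) \cong \bigoplus_j \mathcal{O}_{\mathbb{P}^1}(a_j-1)$ is globally generated with vanishing $H^1$ (as $a_j-1\geq 0$), the scheme $M$ is smooth at $[f_0]$. The differential of $\mathrm{ev}$ at $(t_0,[f_0])$ is $(v,\sigma)\mapsto(v,\,\sigma(t_0)+df_0(v))$ on $T_{t_0}\mathbb{P}^1\oplus H^0(\mathbb{P}^1,f_0^\ast T_X(-[0]))$; it surjects onto $T_{t_0}\mathbb{P}^1$ through the first coordinate and, with $v=0$, onto $(f_0^\ast T_X)|_{t_0}=T_{f_0(t_0)}X$ by global generation, hence it is surjective. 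Together with smoothness of $M$ at $[f_0]$ this shows $\mathrm{ev}$ is smooth, and in particular dominant, at $(t_0,[f_0])$.

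The final step is to extract an actual countable covering set. Since $\mathbb{P}^1\times M$ is a countable union of quasi-projective varieties over $k$, Lemma \ref{lem:countable} provides a countable Zariski-dense subset of $k$-points of $\mathbb{P}^1\times M$, which I would enumerate as $\{(t_i,f_i)\}_{i\geq 1}$ with $f_i\colon(\mathbb{P}^1,0)\to(X,u)$. As $\mathrm{ev}$ is dominant, its image on this dense set, namely $\{(t_i,f_i(t_i))\}_i$, is dense in $\mathbb{P}^1\times X$; since $\{(t_i,f_i(t_i))\}_i \subseteq \bigcup_{i\geq 1}\Gamma_{f_i}$, the union of graphs $\bigcup_{i\geq 1}\Gamma_{f_i}$ is dense in $\mathbb{P}^1\times X$. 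As every $f_i$ sends $0$ to $u\in U$ and $U$ was an arbitrary dense open, this exhibits $X$ as geometrically-special over $k$.

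I do not expect a serious obstacle --- which is why this is ``straightforward'' --- but the two points needing care are: (i) getting $\mathrm{ev}$ to dominate the \emph{full} product $\mathbb{P}^1\times X$ rather than just the slice $\{0\}\times X$, which forces evaluation at a second, moving point $t_0$ and is where very-freeness (rather than mere freeness) of $f_0$ is used; and (ii) replacing the possibly uncountable family of deformations by a genuine countable sequence, which is precisely the role of Lemma \ref{lem:countable}.
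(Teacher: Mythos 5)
Your proof is correct and follows essentially the same route as the paper: both reduce the statement to the dominance of the evaluation map on pointed morphisms $(\mathbb{P}^1,0)\to(X,u)$ at a second point of $\mathbb{P}^1$, which is exactly the criterion noted after Definition \ref{def:geom}. The only difference is cosmetic: you verify dominance by hand via the deformation theory of a very free curve (smoothness of the Hom-scheme and surjectivity of the differential) and then extract a countable covering set via Lemma \ref{lem:countable}, whereas the paper quotes the standard dominance of $\mathrm{ev}_\infty$ for rationally connected varieties and moves the evaluation point by an automorphism of $\mathbb{P}^1$ fixing $0$.
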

\begin{proof}
Note that,  there is a dense open subset $U\subset X$  such that, for every $x$   in $U(k)$, the image of the   evaluation morphism   
\[
\mathrm{ev}_\infty\colon   \underline{\Hom}((\mathbb{P}^1_k,0),(X,x))\to   X, \quad f\mapsto f(\infty)
\] contains $U$, and is thus dominant.  By composing with an appropriate automorphism of $\mathbb{P}^1_k$, it follows that, for every $c\neq 0$ in $\mathbb{P}^1(k)$, the evaluation  morphism
\[
\mathrm{ev}_c\colon   \underline{\Hom}((\mathbb{P}^1_k,0),(X,x))\to   X, \quad f\mapsto f(c)
\]  is dominant.
  This implies that, for every $x$ in $U(k)$, there is a covering set $f_i:(\mathbb{P}^1_k,0)\to (X,x)$ for $X$, so that $X$   is geometrically-special over $k$, as required.
\end{proof}

It is not known whether a rationally connected smooth projective variety is arithmetically-special. In fact, this is open even for Fano varieties; see  \cite{HassettSurvey} for a survey of some known results. 

\subsection{The case of curves}
The conjectures of Campana and Lang are fully understood in the case of curves (unless one looks at orbifold curves). We state part of the optimal statement  below. Note that we do not use this result, and that it is superseded by    more general results on closed subvarieties of abelian varieties (see Theorem \ref{thm:ab_var}).

\begin{theorem} Let $X$ be a smooth  projective   curve over $k$. Then the following statements are equivalent.
\begin{enumerate}
\item The curve $X$ is of  general type over $k$ (i.e., $\mathrm{genus}(X) > 1$).
\item The curve $X$ is not special over $k$.
 \item The curve $X$ is geometrically hyperbolic over $k$.
\item The curve $X$ is not geometrically special over $k$.
\end{enumerate}
 \end{theorem}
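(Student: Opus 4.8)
Write $g:=\mathrm{genus}(X)$. Since statement (1) is, by definition, the numerical condition $g\geq 2$, the plan is simply to prove that each of (2), (3), and (4) is equivalent to $g\geq 2$ as well; the theorem then follows by transitivity.

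For the equivalence with (2) I would unwind Campana's definition of specialness. As $X$ is smooth of dimension one, $\Lambda^p\Omega^1_X$ vanishes for $p\geq 2$, so the only condition to verify is that every rank-one coherent subsheaf $F\subseteq\Omega^1_X=\omega_X$ has Kodaira dimension at most $0$. Since $X$ is a smooth curve, every such $F$ is a line bundle of the form $\omega_X(-E)$ with $E\geq 0$ effective, so $\deg F\leq 2g-2$. If $g\leq 1$ then $\deg F\leq 0$ for every $F$, hence $\kappa(F)\leq 0$ and $X$ is special; if $g\geq 2$ then $F=\omega_X$ has $\deg F=2g-2>0$, so $\kappa(\omega_X)=1>0$ and $X$ is not special. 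Thus $X$ is not special if and only if $g\geq 2$.

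For the equivalence with (3) I would argue as follows. If $g\geq 2$, then $\Hom_k((C,c),(X,x))$ is finite for every smooth connected pointed curve $(C,c)$ and every $x\in X(k)$: it contains one constant morphism, and its non-constant members lie among the finitely many non-constant morphisms $C\to X$, finiteness of which is the classical de Franchis--Severi rigidity (Riemann--Hurwitz bounds the degree by $\mathrm{genus}(C)-1$, while $H^0(C,f^\ast T_X)=0$ because $f^\ast T_X$ has negative degree, so there are no infinitesimal deformations). Hence $X$ is geometrically hyperbolic over $k$. Conversely, if $g=0$ the morphisms $z\mapsto z^n$ give infinitely many elements of $\Hom_k((\mathbb{P}^1,0),(\mathbb{P}^1,0))$, and if $g=1$ the multiplication-by-$n$ maps give infinitely many elements of $\Hom_k((X,0_X),(X,0_X))$; in neither case is $X$ geometrically hyperbolic. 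So $X$ is geometrically hyperbolic over $k$ if and only if $g\geq 2$. (When $k=\CC$ one could alternatively invoke the Brody hyperbolicity of curves of genus $\geq 2$ together with \cite[Theorem~5.3.10]{KobayashiBook}.)

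Finally, for the equivalence with (4): if $g\geq 2$, then by the previous step $X$ is geometrically hyperbolic, hence pseudo-geometrically hyperbolic over $k$ (take $Z=\emptyset$), and since $\dim X\geq 1$ it is therefore not geometrically-special over $k$. If $g\leq 1$, then $X$ is either $\mathbb{P}^1$, which is rationally connected and hence geometrically-special by the proposition on rationally connected varieties proved above, or an elliptic curve, which is an abelian variety and hence geometrically-special by Proposition~\ref{prop:ab_var_are_special}; either way $X$ is geometrically-special over $k$. So $X$ is not geometrically-special over $k$ if and only if $g\geq 2$, completing the plan. I do not anticipate a genuine obstacle here: beyond unwinding the definitions and quoting results already established in this paper, the only input is the classical de Franchis--Severi finiteness used in the treatment of (3).
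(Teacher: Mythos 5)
Your proposal is correct and follows essentially the same route as the paper, which likewise disposes of $(1)\Leftrightarrow(2)$ by unwinding the definition of specialness for curves and derives the remaining equivalences from the definitions together with de Franchis finiteness, plus the already-established facts that $\mathbb{P}^1$ (rationally connected) and elliptic curves (abelian varieties) are geometrically-special. The only point worth noting is that Definition~\ref{def:psgeomhyp} allows non-projective pointed curves $(C,c)$, but this is harmless since any morphism from a smooth curve to the projective curve $X$ extends to the smooth compactification of $C$, so your de Franchis argument still applies.
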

 \begin{proof}  The equivalence of $(1)$ and $ (2)$ for curves is well-known.   The other implications follow from the definitions and the finiteness theorem of De Franchis (which says that, given  smooth projective  curves $C$ and $D$ with $D$ of  general type, the set of non-constant morphism $C\to D$ is finite.)
 \end{proof}

 \subsection{Reducing to the field of complex numbers} Let $L/k$ be an extension of algebraically closed fields of characteristic zero.
 If $X$ is geometrically-special over $k$, then $X_L$ is obviously geometrically-special over $L$. The converse is probably true, but not clear to us.   
 
Related to this is the following simple lemma which allows us to reduce certain proofs to varieties over $\mathbb{C}$ (see, for example, our proof of Theorem \ref{thm:main_result_I}).
 
 \begin{lemma}\label{lem:descent}
Let $X$ be a geometrically-special variety over $k$. Then, there is a countable algebraically closed subfield $k_0\subset k$, and a variety $X_0$ over $k_0$ with an isomorphism $X_{0,k}\cong X$  such that $X_{0}$ is geometrically-special over $k_0$.
 \end{lemma}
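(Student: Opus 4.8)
The plan is a spreading-out argument, whose one real subtlety is the following. If $k$ is uncountable, then $X$ has uncountably many dense open subsets, and geometric-specialness over $k$ requires a covering set attached to \emph{each} of them; so one cannot hope to spread out all of this data to a single countable subfield in one step. This is the main obstacle, and I would get around it by first descending $X$ itself to a countable algebraically closed field and then running an iteration which ensures that every dense open of the eventual model is already accounted for at some finite stage.

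In detail: first I would invoke the standard limit formalism to produce a finitely generated subfield $L\subset k$ together with a model $X_L$ of $X$ over $L$. Since $L$ is a finitely generated extension of $\mathbb{Q}$, its algebraic closure $k_1\subset k$ is countable; set $X_1:=X_L\otimes_L k_1$, which is a variety over the countable algebraically closed field $k_1$ with $X_1\otimes_{k_1}k\cong X$. Next I would build an increasing chain $k_1\subseteq k_2\subseteq\cdots$ of countable algebraically closed subfields of $k$: given $k_m$, set $X_m:=X_1\otimes_{k_1}k_m$; as $k_m$ is countable, $X_m$ has countably many points, hence countably many closed subsets, hence countably many dense open subsets $U_m^{(1)},U_m^{(2)},\dots$; for each $n$ the open $(U_m^{(n)})_k\subset X$ is dense, so geometric-specialness of $X$ over $k$ supplies a covering set $\{f_{m,i}^{(n)}\colon(C_m^{(n)},c_m^{(n)})\to(X,u_m^{(n)})\}_{i\geq 1}$ with $u_m^{(n)}\in(U_m^{(n)})_k(k)$; and I let $k_{m+1}\subset k$ be the algebraic closure of the compositum of $k_m$ with the (countably many) finitely generated fields of definition of all the curves $C_m^{(n)}$, the points $c_m^{(n)}$ and $u_m^{(n)}$, and the morphisms $f_{m,i}^{(n)}$, so that $k_{m+1}$ is again countable and algebraically closed.

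Finally I would set $k_0:=\bigcup_m k_m$, a countable algebraically closed subfield of $k$, and $X_0:=X_1\otimes_{k_1}k_0$, which is a variety over $k_0$ (as $X_1$ is geometrically integral over $k_1$) with $X_0\otimes_{k_0}k\cong X$; it then remains to check that $X_0$ is geometrically-special over $k_0$. Given a dense open $W\subset X_0$, it is, being of finite type, already defined over some $k_m$, say $W=W_m\otimes_{k_m}k_0$ with $W_m\subset X_m$ a dense open, so that $W_m=U_m^{(n)}$ for some $n$. The associated curve, points, and morphisms all live over $k_{m+1}\subseteq k_0$, the point $u_m^{(n)}$ lies in $(U_m^{(n)})_{k_0}(k_0)=W(k_0)$, and the density identity $\overline{\bigcup_i\Gamma_{f_{m,i}^{(n)}}}=C_m^{(n)}\times X_0$ follows from the corresponding identity over $k$ by faithfully flat descent: the closure over $k_0$ of the union of the graphs is a closed subscheme of $C_m^{(n)}\times X_0$ whose base change to $k$ contains every graph $\Gamma_{f_{m,i}^{(n)}}$, hence equals $C_m^{(n)}\times X$, whence the closed subscheme was already all of $C_m^{(n)}\times X_0$. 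Thus $\{f_{m,i}^{(n)}\}_i$ is a covering set for $X_0$ with center in $W$, and $X_0$ is geometrically-special over $k_0$. Apart from the obstacle highlighted above, the inputs used — that a finite-type scheme over a countable field has countably many points, and faithfully flat descent for the density of the union of graphs — are routine, as is the spreading-out in the first step.
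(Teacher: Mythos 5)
Your proof is correct, but it takes a genuinely different route from the paper. The paper handles the same cardinality obstacle (uncountably many dense opens of $X$ over an uncountable $k$) by a one-step descent: it first notes that the set of points of $X(k)$ admitting a covering set is dense, extracts from it a \emph{countable} dense subset via a separate combinatorial lemma (Lemma \ref{lem:countable}, counting irreducible components by dimension), and then chooses $k_0$ large enough to contain fields of definition of $X$, of these countably many marked points, and of their covering sets; the verification that $X_0$ is geometrically-special then amounts to observing that any dense open of $X_0$ base changes to a dense open of $X$, which must contain one of the marked points. You instead run a Skolem-hull style iteration: at stage $m$ you enumerate the (countably many) dense opens of the model over the countable field $k_m$ — using that a finite type scheme over a countable field has countably many points, hence countably many closed subsets — adjoin fields of definition of covering sets witnessing specialness for each of them, and pass to the union $k_0=\bigcup_m k_m$; density of the union of graphs over $k_0$ then descends from $k$ exactly as you say, and the limit formalism guarantees every dense open of $X_0$ is already defined over some $k_m$. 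Your approach dispenses with Lemma \ref{lem:countable} entirely at the cost of the spreading-out/limit bookkeeping and the countability count for opens; the paper's approach is shorter once the observation about a dense set of marked points is made, since it needs no iteration. Both are complete arguments for the statement.
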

\begin{proof} 
 By definition, there is a dense subset $S'\subset X(k)$ such that, for every $x$ in $S'$, there is a pointed curve $(C_x,c_x)$ (which depends on $x$) and a covering set $(f_{x,i}:(C_x,c_x)\to (X,x))_{i}$.  Let $S\subset S'$ be a countable dense subset   (see Lemma \ref{lem:countable} below).  
 
For each $x \in S$, we choose a countable algebraically closed subfield $k_x\subset k$ such that $(C_{x}, c_{x})$ can be defined over $k_x$, the variety $X$ can be defined over $k_x$, the point $x$ can be defined over $k_x$, and the covering set $(f_{i,x}:(C_{x},c_x)\to (X,x))_i$ can be defined over $k_x$. (Note that $k_x$ might not have finite transcendence degree, because we are descending a countably infinite amount of data.) Let $k_0$ be the algebraic closure (in $k$) of the union $\cup_x k_x$ of the countably many countable subfields $k_x$ of $k$. Note that $k_0$  is countable of characteristic zero and that $X$ has a model, say $X_0$, over $k_0$ which by construction is geometrically-special over $k_0$.
\end{proof}

\begin{lemma}\label{lem:countable}
Let $X$  be a variety over $k$, and let $S\subset X(k)$  be a dense subset. Then $S$ contains a countable dense subset.
\end{lemma}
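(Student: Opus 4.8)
The plan is to produce a countable dense subset of $S$ by a transfinite greedy procedure, and to show that the procedure must stop after countably many steps because the underlying topological space of a variety does not admit a strictly increasing $\omega_1$‑chain of closed subsets. So the argument splits into a soft ``construction'' part and a purely topological ``no long chains'' lemma, which will carry all the weight.

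\textbf{The construction.} Assuming $S\neq\emptyset$, I would choose points $s_\alpha\in S$ indexed by ordinals as follows: if $\alpha$ is an ordinal such that $W_{<\alpha}:=\overline{\{\,s_\beta : \beta<\alpha\,\}}$ is not all of $X$, let $s_\alpha$ be any point of $S\setminus W_{<\alpha}$, which exists because $S$ is dense and $W_{<\alpha}$ is a proper closed subset. Since each $s_\alpha$ is a closed point of $X$, one has $W_{<\alpha+1}=W_{<\alpha}\cup\{s_\alpha\}\supsetneq W_{<\alpha}$, and at limit ordinals $W_{<\lambda}=\overline{\bigcup_{\beta<\lambda}W_{<\beta}}$; thus $(W_{<\alpha})_\alpha$ is a strictly increasing chain of closed subsets of $X$ for as long as the procedure runs. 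Let $\theta$ be the least ordinal with $W_{<\theta}=X$; if no such ordinal existed, the procedure would never stop and $(W_{<\alpha})_{\alpha<\omega_1}$ would be a strictly increasing chain of closed subsets of length $\omega_1$. Granting the key claim below that this is impossible, we get $\theta<\omega_1$, i.e. $\theta$ is countable, and then $S_0:=\{\,s_\beta : \beta<\theta\,\}$ is a countable subset of $S$ with $\overline{S_0}=W_{<\theta}=X$, which is what we want.

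\textbf{The key claim.} It remains to show: \emph{in a Noetherian sober topological space there is no strictly increasing chain $(Z_\alpha)_{\alpha<\omega_1}$ of closed subsets all of dimension $\le c$}, for any fixed integer $c\ge 0$. (The underlying space of a variety, and of each of its locally closed subschemes, is Noetherian, sober, and of finite dimension, so this applies to $(W_{<\alpha})$ with $c=\dim X$.) I would prove this by induction on $c$. For $c=0$ a Noetherian sober space of dimension $0$ is a finite discrete space, so any strictly increasing chain of subsets is finite. For the inductive step, given such a chain with $\dim Z_\alpha\le c$: the function $\alpha\mapsto\dim Z_\alpha$ is non‑decreasing and bounded, hence eventually constant, say equal to $e\le c$; after discarding an initial (countable) segment we may assume $\dim Z_\alpha=e$ for all $\alpha$. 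An $e$‑dimensional irreducible component of $Z_\alpha$ stays a component of every $Z_\beta$ with $\beta\ge\alpha$ (it is an irreducible closed subset of $Z_\beta$ of dimension $\dim Z_\beta$, hence maximal), so the number $m(\alpha)$ of $e$‑dimensional components of $Z_\alpha$ is non‑decreasing; and it is bounded, for otherwise, choosing stages $\alpha_1<\alpha_2<\cdots$ with $m(\alpha_j)\ge j$ and taking $\alpha_\infty=\sup_j\alpha_j<\omega_1$ (here $\omega_1$ is regular), $Z_{\alpha_\infty}$ would have infinitely many irreducible components, contradicting Noetherianity. Discarding a further initial segment, the set of $e$‑dimensional components of $Z_\alpha$ is then a single fixed finite set for all $\alpha$; let $D$ be its union, a fixed closed set with $D\subseteq Z_\alpha$ for all $\alpha$. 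Put $Z_\alpha^\circ:=Z_\alpha\setminus D=Z_\alpha\cap(X\setminus D)$, a closed subset of the Noetherian sober space $X\setminus D$. Then $Z_\alpha^\circ$ is (the trace on $X\setminus D$ of) the union of the components of $Z_\alpha$ of dimension $<e$, so $\dim Z_\alpha^\circ\le e-1\le c-1$; and since $D\subseteq Z_\alpha$ for all $\alpha$, the chain $(Z_\alpha^\circ)_\alpha$ is still strictly increasing. This contradicts the inductive hypothesis for $c-1$, and finishes the proof.

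\textbf{Where the difficulty lies.} The ``construction'' part is routine; the whole content is the key claim, and within it the passage from $c$ to $c-1$: one must see that the top‑dimensional stratum $D$ of the $Z_\alpha$ eventually \emph{stabilizes}, so that it can be peeled off to lower the dimension bound. The two ingredients making this work are Noetherianity (finitely many irreducible components) and the regularity of $\omega_1$ (a countable supremum of countable ordinals is countable), which together prevent both the dimension and the number of top‑dimensional components from drifting upward along an $\omega_1$‑chain. I note that one does \emph{not} need to reduce to $\mathbb{C}$ or to a countable base field for this argument — although over a countable base field the lemma is immediate, since the Zariski topology of $X$ is then second countable and hence hereditarily separable.
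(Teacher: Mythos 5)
Your proof is correct, but it takes a genuinely different route from the paper's. You run a greedy transfinite selection of points of $S$ and reduce everything to the auxiliary claim that a Noetherian, sober, finite-dimensional space admits no strictly increasing $\omega_1$-indexed chain of closed subsets, which you prove by induction on dimension (stabilize the dimension, then the top-dimensional components, peel them off, descend). All the steps check out: the persistence of top-dimensional components, the boundedness of their number via regularity of $\omega_1$ and Noetherianity at the supremum stage, and the strictness of the peeled chain because the fixed set $D$ is contained in every term. The paper instead attaches to each closed set $Z$ the vector $d(Z)\in\mathbb{N}_0^{\dim X+1}$ counting irreducible components by dimension, notes that $Z\subsetneq Z'$ forces $d(Z)\prec d(Z')$ in the lexicographic order, and then maximizes $d(\overline{\Sigma})$ over countable subsets $\Sigma\subset S$; density of $S$ forces the maximizing closure to be all of $X$, with no ordinals or recursion appearing. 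It is worth noting that the paper's invariant also yields your key claim in one line: a strictly increasing chain of closed subsets maps order-injectively into $(\mathbb{N}_0^{\dim X+1},\prec_{\mathrm{lex}})$, a well-order of countable order type $\omega^{\dim X+1}$, so any such chain is countable; this would let you skip the dimension induction entirely. Conversely, your argument is more self-contained topologically and proves the stronger structural statement (countability of all strictly increasing chains of closed subsets), whereas the paper's is shorter and stays within elementary combinatorics of component counts.
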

\begin{proof} Let $d:=\dim(X)$. For any closed subset $Z\subset X$, let the $i$-th entry of $d(Z)\in \mathbb{N}_0^{d+1}$ denote the number of irreducible components of $Z$ of dimension $i$. If $Z'$ is   closed in $X$ and  $Z\subsetneq Z'$, then $d(Z) \prec d(Z')$ with $\prec $ the lexicographic order on $\mathbb{N}^{d+1}_0$.

Let  $\mathcal{N}:= \{d(\overline{\Sigma}) \ | \ \Sigma \subset S \ \textrm{countable}\}$. For each $n\in \mathcal{N}$, choose a countable subset $\Sigma_n\subset S$ with $d(\overline{\Sigma_n}) =n$. Let $A:= \overline{\cup_n \Sigma_n}$. Then,  as a countable union of countable sets is countable, we have that  $d(A)\in \mathcal{N}$. Also, for every $n$ in $\mathcal{N}$, we have
$$
n= d(\overline{\Sigma_n})\prec d(A).
$$ If $A\neq X$, then there is an $x\in (X\setminus A)(k)$ which also lies in $S$, so that $d(A) \prec d(A\cup\{x\})\in \mathcal{N}$. This is a contradiction. Therefore,  we conclude that $A=X$, so that  $\cup_n \Sigma_n$ is  dense in $X$, as required.
\end{proof}

\section{Albanese maps}\label{section:abelian varieties}
 
In this section we prove that the Albanese map of a geometrically-special normal projective variety is surjective, has connected fibers, and has no multiple fibers in codimension one; see Theorem \ref{thm:main_result_I} for a precise statement.

\subsection{Abelian varieties are special}  We start by showing    that abelian varieties are special in any sense of the word ``special''.  We give a bit of an "overkill" proof of the fact that abelian varieties are geometrically-special (see Remark \ref{remark:referee}) for a more elementary proof.

\begin{proposition}\label{prop:ab_var_are_special}
If $A$ is an abelian variety over $k$, then $A$ is special over $k$, arithmetically-special over $k$, and geometrically-special over $k$.
\end{proposition}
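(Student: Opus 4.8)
The plan is to verify the three kinds of specialness separately; the first two are quick, and the last one is the real content and requires exhibiting a covering set. For \emph{specialness}: the global $1$-forms on $A$ trivialize its cotangent bundle, so $\Lambda^p\Omega^1_A$ is free, and any rank-one coherent subsheaf $F\subseteq\Lambda^p\Omega^1_A$ sits inside a trivial bundle $\mathcal O_A^{\oplus N}$. Taking reflexive hulls, $F^{\vee\vee}$ is a line bundle $L$ that still embeds into $\mathcal O_A^{\oplus N}$, so projecting to a factor shows $L^{-1}$ is effective; on an abelian variety a line bundle whose inverse is effective has $\kappa\le 0$ (if also $\kappa(L)\ge 1$, then a power of $L$ and a power of $L^{-1}$ are both effective, forcing $L$ to be trivial since the only effective divisor linearly equivalent to $0$ is $0$ — a contradiction). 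Hence $\kappa(F)\le 0\le p-1$ for every $p\ge 1$, so $A$ is special (one could also just cite Campana). For \emph{arithmetic-specialness}: I would invoke the theorem of Frey--Jarden that an abelian variety over a finitely generated field of characteristic zero acquires a Zariski-dense group of rational points after a suitable finite field extension; applying this to a model of $A$ over a finitely generated subfield of $k$, together with a routine spreading-out argument, gives that $A$ is arithmetically-special over $k$.

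For \emph{geometric-specialness} we may assume $g:=\dim A\ge 1$ (the case $g=0$ being trivial). Fix a dense open $U\subseteq A$ and a point $u\in U(k)$. It is classical that every abelian variety is a quotient of a Jacobian, so fix a smooth projective connected curve $C$ over $k$ with a morphism $\iota\colon C\to A$ whose image generates $A$ as a group (e.g.\ a general complete-intersection curve section), together with any point $c\in C(k)$. For $n\in\ZZ$ set $f_n\colon C\to A$, $f_n(t)=u+n\bigl(\iota(t)-\iota(c)\bigr)$, using the group law of $A$; each $f_n$ is a morphism with $f_n(c)=u$. The claim is that $(f_n)_{n\in\ZZ}$ is a covering set, i.e.\ $W:=\overline{\bigcup_n\Gamma_{f_n}}$ equals $C\times A$. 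To see this I would restrict to the generic point $\eta$ of $C$: the fibre $W_\eta$ is a closed subset of $A_{k(C)}$ containing all the $k(C)$-points $u+nP$, where $P:=\iota(\eta)-\iota(c)\in A(k(C))$, hence it contains their Zariski closure, which is the translate by $u$ of the smallest abelian subvariety of $A_{k(C)}$ containing $P$. Since $k$ is algebraically closed, this subvariety descends to an abelian subvariety $A'\subseteq A$ defined over $k$, and $P\in A'(k(C))$ forces $\iota(C)\subseteq A'$; as $\iota(C)$ generates $A$, we get $A'=A$. Thus $W_\eta=A_{k(C)}$, so $W$ contains the generic point of the irreducible variety $C\times A$, whence $W=C\times A$. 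This yields geometric-specialness.

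The step I expect to be the genuine obstacle is precisely the density assertion $W=C\times A$. The naive approach — checking fibrewise over closed points $t\in C$ that $\iota(t)-\iota(c)$ generates a Zariski-dense subgroup of $A$ — breaks down over $k=\Qbar$, because $C(k)$ is then countable and one cannot a priori guarantee a Zariski-dense set of ``good'' points $t$. Passing to the generic point of $C$ is exactly what removes this issue: there the condition ``$P$ generates a dense subgroup'' becomes the purely geometric statement ``$\iota(C)$ generates $A$'', which holds by construction. (One could instead obtain geometric-specialness by a more roundabout argument feeding in the arithmetic density results above, but the generic-point argument seems cleaner.)
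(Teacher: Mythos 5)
Your handling of the first two notions is fine: for specialness you give a short direct argument (any rank-one subsheaf of the trivial bundle $\Lambda^p\Omega^1_A$ has a line-bundle hull embedding in $\OO_A^{\oplus N}$, hence Kodaira dimension $\leq 0$), where the paper simply cites Campana, and for arithmetic-specialness you use exactly the paper's input (Frey--Jarden). For geometric-specialness your route is genuinely different: the paper first reduces to a \emph{simple} abelian variety via Poincar\'e reducibility (using the product and finite-\'etale lemmas), takes a curve $C\subset A$ through $0$, and invokes Raynaud's theorem (Manin--Mumford) to get a dense set of non-degenerate points, the covering set being the maps $x\mapsto nx$; you instead take any curve generating $A$, use the maps $t\mapsto u+n(\iota(t)-\iota(c))$, and check density at the generic point of $C$. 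This avoids both the reduction to the simple case and Manin--Mumford, and is closer in spirit to the referee's Remark 3.2 (which removes Manin--Mumford by a different device, boundedness of the height of torsion points).

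There is, however, one incorrect assertion at the crux of your density argument: the Zariski closure of $\{u+nP : n\in\ZZ\}$ is $u+H$, where $H$ is the smallest \emph{closed algebraic subgroup} of $A_{k(C)}$ containing $P$, and this need not be the smallest abelian subvariety containing $P$: $H$ can be disconnected and strictly smaller. (Take $A=E\times E'$ and $P=(P_1,Q)$ with $P_1$ of infinite order and $Q$ of order $2$; then $H=E\times\{0,Q\}$, while the smallest abelian subvariety containing $P$ is all of $A$.) So proving that the smallest abelian subvariety containing $P$ equals $A$ does not yet give $W_\eta=A_{k(C)}$. The repair is short and stays within your framework: let $B:=H^0$, an abelian subvariety, which descends to $k$ (the descent fact you invoke, which is standard), and let $N:=[H:B]$, so that $NP\in B(k(C))$. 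Then $t\mapsto N\bigl(\iota(t)-\iota(c)\bigr)$ sends the generic point, hence all of $C$, into $B$, so $N\bigl(\iota(C)-\iota(c)\bigr)\subset B(k)$; since the differences $\iota(t)-\iota(c)$ generate $A(k)$ and $[N]\colon A(k)\to A(k)$ is surjective, this forces $B=A$, hence $H=A$ and $W_\eta=A_{k(C)}$ as you want. (Alternatively, observe that the component group of $H$ consists of torsion points of $A/B$, which are $k$-rational, so $H$ itself descends to $k$ and your steps ``$P\in A'(k(C))$ forces $\iota(C)-\iota(c)\subset A'$'' and the generation argument apply verbatim with $A':=H$.) With this correction your proof is complete.
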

\begin{proof}
The fact that $A$ is special over $k$ was shown by Campana \cite{Ca04}. By a theorem of Frey--Jarden \cite{FreyJarden}   (see   \cite[\S 3.1]{JAut}),  there is a   subfield $K\subset k$ which is a finitely generated field extension of $\mathbb{Q}$ and a model $\mathcal{A}$ for $A$ over $K$  such that $\mathcal{A}(K)$ is dense in $A$. This shows that $A$ is arithmetically-special over $k$. Finally, to conclude the proof, let us show that $A$ is geometrically-special over $k$.

First, by Poincar\'e's irreducibility theorem, the abelian variety $A$ is isogenous to a finite product of simple abelian varieties. Therefore, by Lemma \ref{lem:cw_geom_special} and Lemma \ref{lem:prod_geom_special}, we may and do assume that $A$ is simple. To prove the proposition, let $0$ denote the origin of $A$. Since any point of $A(k)$ can be translated to $0$,  it suffices to show that there is a smooth connected pointed curve $(C,c)$ and a covering set  $f_i:(C,c)\to (A,0)$. 

If $\dim A=1$ (so that $A$ is an elliptic curve), define $C:=A$, $c=0$, and note that the graphs $\Gamma_n$ of  the morphism (multiplication by $n$) $[n]:C\to A$ form a covering set which send $0$ to $0$.

If $\dim A\geq 2$, let $C\subset A$ be a smooth irreducible curve containing $c:=0$. Since $A$ is simple, we have that $C$ is of genus at least two. In particular,  by Raynaud's theorem (\emph{formerly} the Manin-Mumford conjecture) \cite{OesterleBB, Raynaud}, almost all points of $C(k)$ are non-torsion. As $A$ is simple, any non-torsion point of $A$ is non-degenerate (i.e., the subgroup generated by such a point is dense in $A$). Now, define $f_n:C\to A$ by $f_n(c) = n\cdot c$. Then $f_n(0) =0$ and, as $C$ contains a dense set of non-degenerate points, the union of the graphs $\Gamma_{f_n}$ is dense in $C\times A$.  This shows that $A$ is geometrically-special over $k$, as required.
\end{proof}

\begin{remark}[Referee]\label{remark:referee}
A referee has pointed out to us that one does not need Manin-Mumford to prove that an abelian variety $A$ is geometrically-special. Indeed, in the proof above, we only use that there exists a curve $C\subset A$ which contains infinitely many non-torsion points of $A$. To construct such a curve, one can use that the height of the set of torsion points on $A$ is bounded (with respect to any fixed embedding of $A $ into a projective space).
\end{remark} 

\begin{proposition}\label{prop:ab_var_are_special2}
 If $A$ is an abelian variety over $\CC$, then $A$ is Brody-special.
\end{proposition}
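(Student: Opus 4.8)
The plan is to produce a Zariski-dense holomorphic map $\CC\to A^{\an}$ of the simplest possible shape: a one-parameter subgroup. Write $A^{\an}=V/\Lambda$ with $V=\CC^{g}$, $g=\dim A$, and $\Lambda\subset V$ a lattice of rank $2g$. For every $v\in V$ the map $\phi_v\colon\CC\to A^{\an}$, $t\mapsto tv\bmod\Lambda$, is a holomorphic homomorphism, and the whole task is to choose $v$ so that $\phi_v$ has Zariski-dense image.

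First I would recall that $A$ has only countably many abelian subvarieties. Indeed, an abelian subvariety $B\subseteq A$ is determined by the subgroup $\Lambda\cap T_0B$ of $\Lambda$ --- its tangent space $T_0B$ being the $\RR$-span of this subgroup --- and the countable group $\Lambda\cong\ZZ^{2g}$ has only countably many subgroups, each finitely generated. Enumerate the \emph{proper} abelian subvarieties as $B_1,B_2,\dots$ and put $W_i:=T_0B_i\subsetneq V$, a proper $\CC$-linear subspace of $V$. Since a complex vector space is not the union of countably many proper linear subspaces (by the Baire category theorem, or a Lebesgue-measure argument), I can choose a vector $v\in V\setminus\bigcup_{i\ge1}W_i$.

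It then remains to verify that $\phi:=\phi_v$ is Zariski-dense for this $v$. Its image $G:=\phi(\CC)$ is a connected subgroup of $A$ containing the origin, so its Zariski closure $\overline{G}$ is a connected algebraic subgroup of the abelian variety $A$, hence an abelian subvariety $B\subseteq A$. Now $\phi$ factors as $\CC\to B\hookrightarrow A$; lifting $\CC\to B$ along its universal cover $T_0B\to B$ and composing with the inclusion $T_0B\hookrightarrow V$ produces a lift of $\phi$ through the covering $V\to A$ sending $0$ to $0$, which by uniqueness of such lifts must agree with $t\mapsto tv$. Hence $tv\in T_0B$ for all $t$, so $v\in T_0B$. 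By the choice of $v$ the abelian subvariety $B$ cannot be proper, so $B=A$ and $\phi$ is Zariski-dense. Therefore $A$ is Brody-special.

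I do not expect a genuine obstacle here: the only point deserving care is the bookkeeping in the second paragraph --- that there are only countably many abelian subvarieties and that each proper one has a proper tangent subspace --- which is precisely what legitimizes the avoidance choice of $v$. One could alternatively simply cite the well-known existence of dense entire curves on abelian varieties, but the argument above is essentially as short and keeps the section self-contained.
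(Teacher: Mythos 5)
Your argument is correct: the image of $\phi_v$ is a subgroup, its Zariski closure is an irreducible (since $\CC$ cannot be covered by two proper analytic subsets) closed subgroup of $A$, hence an abelian subvariety $B$ with $v\in T_0B$ by the uniqueness-of-lifts argument, and your choice of $v$ outside the countably many tangent spaces $W_i$ forces $B=A$; the countability of abelian subvarieties via subgroups of $\Lambda\cong\ZZ^{2g}$ and the Baire-category avoidance are both sound. However, your route differs from the paper's, which is even shorter: the paper takes a \emph{metrically dense} entire curve $\CC\to\CC^{\dim A}$ (a classical construction) and composes it with the uniformization $\CC^{\dim A}\to A^{\an}$, so the image is metrically dense in $A^{\an}$ and a fortiori Zariski dense, with no need to discuss abelian subvarieties or Zariski closures at all. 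Your approach buys a more structured witness --- a one-parameter subgroup, whose image is a translate-invariant-under-itself subgroup and whose density analysis is purely algebraic --- and it is self-contained modulo Baire category; the paper's approach buys brevity and the stronger conclusion of metric density, at the cost of invoking the existence of metrically dense entire curves in $\CC^g$. Both are acceptable proofs of the proposition.
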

\begin{proof}
  Let $\mathbb{C}\to \mathbb{C}^{\dim A}$ be a  metrically-dense holomorphic map. If $\mathbb{C}^{\dim A}\to A^{\an}$ is the exponential, then the image of the composed   map $\mathbb{C}\to \mathbb{C}^{\dim A}\to A^{\an}$   is  Zariski-dense.
\end{proof}

\subsection{Surjectivity of the Albanese map} 
For a closed subvariety $V$ of an abelian variety $A$, we let $\mathrm{Sp}(V)$  be the union of translates of positive-dimensional abelian subvarieties of $A$ contained in $V$. Recall Kawamata-Ueno's theorems that $\mathrm{Sp}(V)$ is a closed subscheme of $V$, and that $\mathrm{Sp}(V) \neq V$ if and only if $V$ is of general type. We now deduce from Yamanoi's work that $V$ is geometrically hyperbolic modulo $\mathrm{Sp}(V)$.    

\begin{theorem}[Yamanoi]\label{thm:V_is_mod} Let $A$ be an abelian variety over $k$.
Let $X\subset A$ be a closed subvariety of $A$. Then $X$ is geometrically hyperbolic modulo $\mathrm{Sp}(X)$.
\end{theorem}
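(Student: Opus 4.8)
The strategy is to reduce the assertion to the finiteness of a sublevel set of a positive-definite quadratic form on a lattice: the lattice is $\Hom_{\mathrm{grp}}(\Jac(C),A)$, the quadratic form is the degree against a fixed polarization of $A$, and Yamanoi's theory enters only to supply a uniform bound on that degree. Fix $x\in X(k)\setminus\mathrm{Sp}(X)$ and a smooth connected pointed curve $(C,c)$; we must show that $\Hom_k((C,c),(X,x))$ is finite. Since $A$ is proper and the smooth projective completion $\overline{C}$ of $C$ is a smooth curve, every morphism $C\to A$ extends uniquely over $\overline{C}$, so $\Hom_k((C,c),(X,x))$ injects into $\Hom_k((\overline{C},c),(X,x))$ and we may assume $C$ is projective. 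Put $J:=\Jac(C)=\Alb(C)$ and let $\mathrm{alb}_c\colon C\to J$ be the Albanese map with $\mathrm{alb}_c(c)=0$. By the universal property of the Albanese, every morphism $f\colon C\to A$ factors uniquely as $f=h\circ\mathrm{alb}_c$ with $h\colon J\to A$ a morphism of varieties; if $f(c)=x$ then $h(0)=x$, so $h=t_x\circ\phi_f$ with $\phi_f\colon J\to A$ a group homomorphism, and $f\mapsto\phi_f$ is a bijection from $\Hom_k((C,c),(A,x))$ onto the finitely generated free abelian group $\Hom_{\mathrm{grp}}(J,A)$.

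Now fix an ample line bundle $L$ on $A$ and set $q(\phi):=\deg_C\bigl((t_x\circ\phi\circ\mathrm{alb}_c)^\ast L\bigr)=(\phi^\ast L)\cdot(\mathrm{alb}_c)_\ast[C]$, where we use that $t_x^\ast L\equiv L$ in $\NS(A)$. As $c_1(\phi^\ast L)=\phi^\ast c_1(L)$ is quadratic in $\phi$ (because $\phi\mapsto\phi^\ast$ is linear on $\HH^1$ and $\HH^2(J)=\Lambda^2\HH^1(J)$), $q$ is an integral quadratic form. It is non-negative, since $\phi^\ast L$ is nef and $(\mathrm{alb}_c)_\ast[C]$ is an effective curve class; and $q(\phi)=0$ forces $\phi^\ast L\equiv 0$ — a nonzero nef class on an abelian variety is the pullback of an ample class along a nontrivial quotient, which the generating curve $\mathrm{alb}_c(C)$ meets positively — whence $\phi=0$ because $L$ is ample. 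A non-negative integral quadratic form whose only zero on the lattice is the origin is positive definite (its radical is a rational subspace), so every sublevel set $\{\phi\in\Hom_{\mathrm{grp}}(J,A):q(\phi)\le R\}$ is finite.

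It remains to bound $q$ on the image of $\Hom_k((C,c),(X,x))$, and here Yamanoi's work on holomorphic curves in subvarieties of abelian varieties \cite{Yamanoi1} enters: it gives that $X$ is \emph{algebraically hyperbolic modulo $\mathrm{Sp}(X)$} (Definition~\ref{def:alghyp1}), i.e.\ there is a constant $\alpha$ with $\deg_C f^\ast L\le \alpha\cdot\mathrm{genus}(C)$ for every smooth projective curve $C$ and every morphism $f\colon C\to X$ with $f(C)\not\subset\mathrm{Sp}(X)$. (Yamanoi's theorems are analytic, hence proved over $\CC$; the bound over $k$ follows by base change to $\CC$.) Granting this, we conclude: if $f\in\Hom_k((C,c),(X,x))$, then $x=f(c)\in f(C)$ and $x\notin\mathrm{Sp}(X)$, so $f(C)\not\subset\mathrm{Sp}(X)$; hence $q(\phi_f)=\deg_C f^\ast L\le\alpha\cdot\mathrm{genus}(C)$, and $f\mapsto\phi_f$ embeds $\Hom_k((C,c),(X,x))$ into the finite set $\{\phi:q(\phi)\le\alpha\cdot\mathrm{genus}(C)\}$. (No case requires separate treatment: when $\mathrm{genus}(C)=0$ one has $J=0$ and the unique morphism is constant.)

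The main obstacle is the algebraic-hyperbolicity input of the last step, namely upgrading Yamanoi's analytic results — which, together with the Bloch--Ochiai--Kawamata theorem, say that every entire curve in $X$ is contained in $\mathrm{Sp}(X)$ — to a \emph{uniform degree bound} for maps from algebraic curves, with exceptional locus exactly $\mathrm{Sp}(X)$. If Yamanoi's cited results (on the pseudo-hyperbolicity of orbifold pairs on, and of ramified covers of, abelian varieties) are already available in such an algebraic or function-field formulation, this step is a citation; otherwise it requires the standard but non-trivial reparametrization-in-families argument of Kobayashi--Ochiai type (cf.\ the discussion around \cite{JKa}) to pass from the non-existence of Zariski-dense entire curves off $\mathrm{Sp}(X)$ to the degree bound. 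Everything else above is formal.
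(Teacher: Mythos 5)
Your argument is correct, but it takes a different route from the paper. The paper's proof of this statement is essentially a citation: it invokes \cite[Theorem~13.1]{JBook}, which records (as a consequence of \cite[Corollary~1.1 and Corollary~1.3]{Yamanoi1}) that the geometric-hyperbolicity exceptional locus $\Delta_X^{geom-hyp}$ equals $\mathrm{Sp}(X)$, so no conversion step is needed. You instead take as input only the \emph{algebraic}-hyperbolicity half of Yamanoi's results (degree bound modulo $\mathrm{Sp}(X)$) and then deduce finiteness of pointed maps by an elementary argument special to abelian varieties: extend maps to the smooth compactification, factor pointed maps through $\Jac(C)$ via the Albanese property, and observe that $\phi\mapsto \deg_C(\phi\circ\mathrm{alb}_c)^\ast L$ is a positive-definite integral quadratic form on the lattice $\Hom_{\mathrm{grp}}(\Jac(C),A)$, so bounded degree forces finiteness. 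This replaces the mechanism the paper uses elsewhere for such conversions, namely Mori's bend-and-break plus the absence of rational curves (Propositions \ref{prop:alg_hyp_implies_geom_hyp} and \ref{prop:from_alg_hyp_to_geom_hyp}), by a bend-and-break--free argument; what it buys is a self-contained, quantitative reduction that needs only the degree-bound form of Yamanoi's theorem, at the cost of being tied to the ambient abelian variety. Two small remarks on your hedging at the end: the worry about upgrading an analytic statement is unfounded, since \cite{Yamanoi1} as used in this paper (its Corollaries 1 and 2 and Proposition 5) is already formulated algebraically for maps from curves, with exceptional locus the special locus, which is exactly the input you need (cf.\ Theorem \ref{thm:yamanoi} in the paper); and the passage from $\CC$ to general $k$ is not literal base change but the standard specialization argument (see \cite[Lemma~9.2]{vBJK}), as the paper itself notes. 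Also note that when $\mathrm{Sp}(X)=X$ the statement is vacuous, so the general-type hypothesis implicit in Yamanoi's bound causes no loss.
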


\begin{proof} 
Let $\Delta^{geom-hyp}_X$ be the intersection over all closed subsets $\Delta\subset X$ such that $X$ is geometrically hyperbolic modulo $\Delta$ (as defined in \cite[Section~11]{JBook}). Then,   by  Yamanoi's theorem \cite[Theorem~13.1]{JBook} (which is a consequence of  the original \cite[Corollary~1.1 and Corollary~1.3]{Yamanoi1}), we have that  $\mathrm{Sp}(X) = \Delta_X^{geom-hyp}$.   
\end{proof}

\begin{theorem}[Bloch-Ochiai-Kawamata, Faltings, Ueno, Yamanoi]\label{thm:ab_var}
Let $A$ be an abelian variety over $k$. Let $X\subset A$ be a closed integral subvariety. Then the following are equivalent.
\begin{enumerate}
\item The variety $X$ is the translate of an abelian subvariety of $A$.
\item The variety $X$ is special.
\item If $k=\CC$, the variety $X$ is Brody-special.
\item The projective variety $X$ is arithmetically-special over $k$.
\item The variety $X$ is geometrically-special over $k$.
\end{enumerate}
\end{theorem}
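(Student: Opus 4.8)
The plan is to prove the four implications $(1)\Rightarrow(j)$ for $j\in\{2,3,4,5\}$, and then $(j)\Rightarrow(1)$ for each such $j$; together these give all the claimed equivalences. The implications out of $(1)$ are essentially formal: a translate of an abelian subvariety becomes, after translation by a $k$-point, an abelian subvariety $B\subset A$, hence an abelian variety; and each of the four specialness notions is invariant under isomorphism of varieties (for arithmetic-specialness one transports a model along the isomorphism, enlarging the finitely generated base field if necessary). So $(1)$ implies $(2)$, $(4)$ and $(5)$ by Proposition \ref{prop:ab_var_are_special}, and $(3)$ by Proposition \ref{prop:ab_var_are_special2}.

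For the converse implications I would invoke the Ueno fibration from the structure theory of subvarieties of abelian varieties (Ueno, Kawamata). Let $B\subset A$ be the identity component of the subgroup $\{a\in A\colon a+X=X\}$ and let $q\colon A\to A/B$ be the quotient homomorphism. Then $\bar X:=q(X)$ is a closed integral subvariety of $A/B$, the morphism $X\to\bar X$ is surjective with every fibre a coset of $B$, and, by Ueno's theorem, either $\bar X$ is a point --- in which case $X=B+x$ for some $x\in X(k)$, so $(1)$ holds --- or $\bar X$ is of general type. Since $X$ and $\bar X$ are proper and $X\to\bar X$ is a dominant (indeed surjective) morphism, each of the notions $(2)$--$(5)$ passes from $X$ to $\bar X$ by Lemma \ref{lem:surj}. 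Hence it suffices to show that a \emph{positive-dimensional} subvariety $\bar X$ of general type of an abelian variety satisfies none of $(2)$--$(5)$; this forces $\bar X$ to be a point, yielding $(1)$ in all four cases.

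It then remains to dispatch the four non-specialness statements for such a $\bar X$, writing $n:=\dim\bar X\ge 1$ and $\widetilde{\bar X}\to\bar X$ for a desingularization. For $(2)$: general type means $\omega_{\widetilde{\bar X}}=\Lambda^{n}\Omega^1_{\widetilde{\bar X}}$ is big, so its Kodaira dimension equals $n$, which is not $\le n-1$; hence $\bar X$ is not special. For $(3)$: by the Bloch--Ochiai--Kawamata theorem every entire curve $\CC\to(A/B)^{\an}$ with image in $\bar X$ has Zariski closure a translate of an abelian subvariety contained in $\bar X$; as $\bar X$ is of general type it is not itself such a translate (equivalently $\mathrm{Sp}(\bar X)\subsetneq\bar X$, by Kawamata--Ueno), so no such entire curve is Zariski dense, i.e. $\bar X$ is not Brody-special. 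For $(4)$: by Faltings's theorem --- together with its extension from number fields to finitely generated fields of characteristic zero, via the Lang--N\'eron theorem and specialization --- for any finitely generated subfield $K\subset k$ and any model of $\bar X$ over $K$, the $K$-points lie in a finite union of translates of abelian subvarieties contained in $\bar X$, hence in the proper closed subset consisting of $\mathrm{Sp}(\bar X)$ together with a finite set; so $\bar X$ is not arithmetically-special. For $(5)$: by Theorem \ref{thm:V_is_mod}, $\bar X$ is geometrically hyperbolic modulo $\mathrm{Sp}(\bar X)$, and $\mathrm{Sp}(\bar X)\subsetneq\bar X$ since $\bar X$ is of general type, so $\bar X$ is pseudo-geometrically hyperbolic; but a positive-dimensional geometrically-special variety is never pseudo-geometrically hyperbolic (Corollary \ref{cor:image_is_not_psgeom}), a contradiction.

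The only genuinely non-elementary ingredient is case $(5)$, which rests on Yamanoi's theorem as packaged in Theorem \ref{thm:V_is_mod}; the remaining three implications are classical. The two points that require care are the correct invocation of the Ueno/Kawamata structure theory --- closedness of $\mathrm{Sp}$ and general type of the quotient $\bar X$ --- and the passage of Faltings's finiteness statement from number fields to arbitrary finitely generated fields of characteristic zero, which is where I expect the most bookkeeping even though no new idea is needed.
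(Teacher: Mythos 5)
Your proposal is correct and follows essentially the same route as the paper: both directions out of $(1)$ via Propositions \ref{prop:ab_var_are_special} and \ref{prop:ab_var_are_special2}, and the converses via the Ueno fibration (your stabilizer quotient $X\to\bar X$ is exactly that fibration), pushing each specialness notion down to the general-type base by Lemma \ref{lem:surj} and then ruling it out with Kawamata--Ueno plus Yamanoi (Theorem \ref{thm:V_is_mod} and Corollary \ref{cor:image_is_not_psgeom}) for $(5)$, Bloch--Ochiai--Kawamata for $(3)$, and Faltings over finitely generated fields for $(4)$. The only cosmetic difference is that you argue $(2)$ directly from bigness of the canonical sheaf of the base, where the paper simply quotes Ueno's fibration theorem; this changes nothing of substance.
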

\begin{proof}  The implications $(1)\implies (2)$, $(1)\implies (3)$, $(1)\implies (4)$, and $(1)\implies (5)$ follow from Proposition \ref{prop:ab_var_are_special} and Proposition \ref{prop:ab_var_are_special2}.

We   show that $(5)\implies (1)$.
Assume $(5)$ holds,  and consider the   Ueno fibration $f:X\to Y$ (see  \cite[Chapter~IV, Theorem~10.9]{Ueno}); recall that this is a surjective morphism   whose fibres are abelian varieties and that  $Y$ is a variety of general type which can be embedded into an    abelian variety. Since $Y$ is of general type and can be embedded into an abelian variety, it follows from Kawamata-Ueno's theorem that $\mathrm{Sp}(Y)\neq Y$,  so that  Yamanoi's theorem (Theorem \ref{thm:V_is_mod}) implies that $Y$ is pseudo-geometrically hyperbolic.   However, since $X$ is geometrically-special and dominates the pseudo-geometrically hyperbolic variety $Y$, we have that $Y$ is zero-dimensional (Corollary \ref{cor:image_is_not_psgeom}).    Therefore, $X$ must be the translate of an abelian subvariety of $A$.

The fact that $(2)\implies (1)$ follows directly from Ueno's fibration theorem. One can show $(3)\implies (1)$ and $(4)\implies (1)$ by replacing Yamanoi's theorem above with    Bloch-Ochiai-Kawamata \cite{Kawa80} and Faltings \cite{Fa94}, respectively.
\end{proof}

\begin{corollary}\label{cor:surjectivity_alb} If  $X$ is a geometrically-special normal proper   variety over $k$, then the Albanese map $X\to \mathrm{Alb}(X)$ is surjective.
\end{corollary}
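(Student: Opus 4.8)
The plan is to deduce the surjectivity of the Albanese map from Theorem~\ref{thm:ab_var} (specifically the implication $(5)\Rightarrow(1)$) together with the functoriality of the Albanese construction. First I would reduce to the case $k=\CC$: by Lemma~\ref{lem:descent} we may descend $X$ to a countable algebraically closed subfield $k_0$, and since the formation of $\mathrm{Alb}(X)$ commutes with the field extension $k_0\hookrightarrow\CC$ and surjectivity can be checked after base change to $\CC$, it suffices to treat the complex case. (In fact the reduction is not strictly needed here, since Theorem~\ref{thm:ab_var} is stated over a general $k$, but it is harmless.)

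Next, let $B$ denote the scheme-theoretic image of the Albanese map $\alpha\colon X\to\mathrm{Alb}(X)$, which is a closed integral subvariety of the abelian variety $\mathrm{Alb}(X)$. Since $X$ is geometrically-special over $k$ and $\alpha\colon X\to B$ is surjective, Lemma~\ref{lem:surj0} (going down) shows that $B$ is geometrically-special over $k$. Now apply the implication $(5)\Rightarrow(1)$ of Theorem~\ref{thm:ab_var}: the closed subvariety $B\subset\mathrm{Alb}(X)$ is the translate of an abelian subvariety $A_0\subseteq\mathrm{Alb}(X)$. Translating so that $\alpha$ sends the chosen basepoint of $X$ to the origin, we get that $B=A_0$ is an abelian subvariety of $\mathrm{Alb}(X)$.

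Finally I would invoke the universal property of the Albanese variety (see \cite[Proposition~A.6]{Mochizu}): the composition $X\xrightarrow{\alpha}A_0\hookrightarrow\mathrm{Alb}(X)$ has image generating $A_0$ as a group, so the inclusion $A_0\hookrightarrow\mathrm{Alb}(X)$ must be surjective, i.e.\ $A_0=\mathrm{Alb}(X)$. Indeed, the subgroup of $\mathrm{Alb}(X)$ generated by $\alpha(X)$ is all of $\mathrm{Alb}(X)$ by construction of the Albanese map; since this subgroup is contained in the abelian subvariety $A_0$, we conclude $A_0=\mathrm{Alb}(X)$ and hence $\alpha$ is surjective.

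The only mild subtlety — the "main obstacle", such as it is — is making sure the basepoint normalization and the functoriality statements are applied consistently: one needs that $\alpha(X)$ generates $\mathrm{Alb}(X)$ (a standard fact about the Albanese map, valid since $X$ is normal and proper), and that geometric-specialness descends along the surjection $X\twoheadrightarrow B$, which is exactly Lemma~\ref{lem:surj0}. With these in hand the corollary is immediate from Theorem~\ref{thm:ab_var}.
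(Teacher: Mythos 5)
Your proposal is correct and follows essentially the same route as the paper: take the image of the Albanese map, note it is geometrically-special by the going-down lemma, conclude via Theorem \ref{thm:ab_var} that it is a translate of an abelian subvariety, and then use the universal property of the Albanese map to force this subvariety to be all of $\mathrm{Alb}(X)$. The preliminary reduction to $k=\CC$ is, as you observe yourself, unnecessary, since Theorem \ref{thm:ab_var} is already stated over a general algebraically closed field of characteristic zero.
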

\begin{proof}
Note that the image of $X$ in $\mathrm{Alb}(X)$ is   geometrically-special (Lemma \ref{lem:surj}). Moreover, by Theorem \ref{thm:ab_var}, any geometrically-special closed (integral) subvariety of $\mathrm{Alb}(X)$ is the translate of an abelian subvariety. This implies that the image of $X\to \mathrm{Alb}(X)$ is  the translate of an abelian subvariety which, by the universal property of Albanese maps, must equal $\mathrm{Alb}(X)$.  
\end{proof}

An important structure result for the Albanese map of a geometrically-special variety in the study of its fundamental group is the following corollary. 

 \begin{corollary} \label{cor:covers_have_surj_alb}
 If $X$ is a geometrically-special normal projective variety over $k$ and $Y\to X$ is a finite \'etale morphism, then the Albanese map of $Y$ is surjective.
 \end{corollary}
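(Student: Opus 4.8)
The plan is to deduce this immediately from Corollary \ref{cor:surjectivity_alb} together with the invariance of geometric-specialness under finite \'etale covers. First I would check that $Y$ satisfies the hypotheses of Corollary \ref{cor:surjectivity_alb}: since $Y\to X$ is finite and $X$ is projective, $Y$ is projective; since $Y\to X$ is \'etale and $X$ is normal, $Y$ is normal; and, being a variety in the sense of our conventions, $Y$ is connected, hence integral. So $Y$ is a normal projective variety over $k$.

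Next I would invoke Lemma \ref{lem:cw_geom_special}: as $X$ is geometrically-special over $k$ and $Y\to X$ is finite \'etale, the variety $Y$ is geometrically-special over $k$ as well.

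Finally, applying Corollary \ref{cor:surjectivity_alb} to the geometrically-special normal projective variety $Y$ shows that the Albanese map $Y\to\mathrm{Alb}(Y)$ is surjective, which is exactly the assertion.

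There is no serious obstacle here: the statement is essentially a formal consequence of the two results just quoted (which in turn rest on Theorem \ref{thm:ab_var}, i.e.\ on Yamanoi's work), and the only point requiring attention is the routine verification that normality and projectivity are inherited by the finite \'etale cover $Y$. I would also remark that the same argument applies verbatim with ``normal projective'' replaced by ``normal proper'', since Corollary \ref{cor:surjectivity_alb} and Lemma \ref{lem:cw_geom_special} are both available in that generality.
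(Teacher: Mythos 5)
Your argument is exactly the paper's proof: apply Lemma \ref{lem:cw_geom_special} to transfer geometric-specialness to $Y$ and then invoke Corollary \ref{cor:surjectivity_alb}; the extra checks that $Y$ is normal and projective are routine and correct. Your remark that the same works in the proper case is also consistent with the statement of Corollary \ref{cor:surjectivity_alb}.
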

 \begin{proof}
 By Lemma \ref{lem:cw_geom_special}, we have that $Y$ is geometrically-special. Since $Y$ is a geometrically-special normal projective   variety, its Albanese map is surjective (Corollary \ref{cor:surjectivity_alb}).  
 \end{proof}

 Note that Corollary \ref{cor:covers_have_surj_alb} implies that the augmented irregularity of a geometrically-special normal projective variety over $k$ is bounded from above by $\dim(X)$. 
 
\subsection{Connectedness of the fibres}\label{section:connectedness}
By using Kawamata's extension of the Ueno fibration theorem for closed subvarieties of abelian varieties to finite covers of abelian  varieties \cite[Theorem~23]{KawamataChar} and Yamanoi's recent work on the hyperbolicity of covers of abelian varieties, we can show that the Albanese map of a geometrically-special variety has connected fibres. We start with stating Kawamata's result.

\begin{theorem}[Kawamata]\label{thm:kawamata_fibn}
Let $X$ be a   normal projective  variety over $k$ and let $X\to A$ be a finite morphism. Then  the following data exist.
\begin{enumerate}
\item An abelian subvariety $B$ of $A$;
\item finite \'etale Galois covers $X'\to X$  and $B'\to B$;
\item  a normal projective variety $Y$ over $k$ of general type;
\item    a finite morphism $Y\to A/B$ with $A/B$ the quotient of $A$ by $B$ such that  $X'$ is a fiber bundle over $Y$ with fibers $B'$ and with translations by $B'$ as structure group. 
\end{enumerate}  
\end{theorem}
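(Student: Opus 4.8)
The plan is to deduce this directly from Kawamata's structure theory for finite covers of abelian varieties; concretely, the statement is \cite[Theorem~23]{KawamataChar}, so strictly speaking the ``proof'' is a citation. For the reader's convenience, let me indicate the mechanism behind it and which step carries the real content.

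First I would reduce to the case in which $X\to A$ factors through no proper abelian subvariety of $A$. Since $X\to A$ is finite, hence closed, its image $f(X)$ is a subvariety of $A$; after composing $f$ with a translation (which changes neither the hypotheses nor the conclusion, as bundle structures and being of general type are translation-invariant) we may assume $0\in f(X)$, and then replace $A$ by the abelian subvariety $A_0$ it generates. A decomposition for $X\to A_0$ yields one for $X\to A$ via the inclusions $B\subset A_0\subset A$ and $A_0/B\hookrightarrow A/B$.

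Next comes the main construction: one produces the \emph{maximal} abelian subvariety $B\subset A$ for which, after passing to a suitable finite \'etale Galois cover $X'\to X$, the composite $X'\to X\to A$ followed by the quotient $A\to A/B$ factors through a finite morphism $Y\to A/B$ from a normal projective variety $Y$, in such a way that $X'\to Y$ is an \'etale-locally trivial principal bundle under a finite \'etale isogeny $B'\to B$ (so that fibrewise $X'\to A$ is this isogeny). Here normality of $X$ guarantees that $X'$ and $Y$ stay normal and projective, and the existence of such a maximal $B$ relies on the finite generation of $\pi_1(X)$ — so that only finitely many \'etale covers of bounded degree arise, cf. Lemma \ref{lem:finiteness} — together with a boundedness argument for abelian subvarieties of $A$.

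Finally one must show that the base $Y$ is of general type, and this is where I expect the main obstacle to lie, as it is precisely the hard content of Kawamata's theorem rather than a formal consequence of the previous steps. The argument is by contradiction via maximality of $B$: if $Y$ were not of general type, its Iitaka fibration would have positive-dimensional fibres $F$ with $\kappa(F)=0$, each mapping finitely onto a subvariety $V\subset A/B$; since Kodaira dimension does not decrease under generically finite dominant maps, $V$ would satisfy $\kappa(V)\le 0$, hence by the Ueno--Kawamata analysis of subvarieties of abelian varieties $V$ (and therefore $Y$, after a further \'etale cover pulled back to $X'$) would be trivial in the direction of a positive-dimensional abelian subvariety of $A/B$, enlarging $B$ — a contradiction. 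Assembling these trivializations coherently into a single finite \'etale cover of $X$, and keeping track of the fibre-bundle structure throughout, is the technical heart of the proof.
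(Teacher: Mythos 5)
Your proposal matches the paper exactly: the paper's entire proof of this theorem is the citation \cite[Theorem~23]{KawamataChar}, which is precisely the route you take. The additional sketch of Kawamata's mechanism is reasonable expository material but goes beyond what the paper itself records.
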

\begin{proof}
See \cite[Theorem~23]{KawamataChar}.
\end{proof}

We follow Yamanoi and say  that a   normal projective  variety over $k$ is of \emph{maximal Albanese dimension} if the Albanese morphism   $Y\to\mathrm{Alb}(Y)$ is generically finite onto its image. The additional ingredient we need for proving the connectedness of the fibres of the Albanese map is Yamanoi's work on the hyperbolicity of  varieties with maximal Albanese dimension (which builds on Theorem \ref{thm:V_is_mod}) .

\begin{theorem} \label{thm:yamanoi}  
Let $Y$ be a   normal projective variety of general type over $k$. If $Y$  is of maximal Albanese dimension, then $Y$ is pseudo-algebraically hyperbolic and pseudo-geometrically hyperbolic over $k$.
\end{theorem}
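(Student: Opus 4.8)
The plan is to deduce this from Yamanoi's deep results on the value distribution / hyperbolicity of varieties of maximal Albanese dimension, together with the Kawamata fibration theorem (Theorem~\ref{thm:kawamata_fibn}) reducing the problem to the case of a finite cover of an abelian variety. First I would recall that, by Ueno's and Kawamata's theory, a normal projective $Y$ of general type admits a finite morphism (after passing to an Iitaka-type reduction / the Albanese image) to an abelian variety only in a controlled way; but here the hypothesis is precisely that $Y\to\mathrm{Alb}(Y)$ is generically finite onto its image, so $Y$ \emph{is} birational to its Albanese image $Z\subset\mathrm{Alb}(Y)$, which is a closed subvariety of an abelian variety. Since $Y$ is of general type, $Z$ is of general type, hence by Kawamata--Ueno $\mathrm{Sp}(Z)\subsetneq Z$. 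Then Yamanoi's theorem (Theorem~\ref{thm:V_is_mod}) gives that $Z$ is geometrically hyperbolic modulo the proper closed subset $\mathrm{Sp}(Z)$.

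The key steps, in order, are: (1) Reduce to the Albanese image. Let $a\colon Y\to\mathrm{Alb}(Y)$ be the Albanese map and $Z=a(Y)$; by hypothesis $a\colon Y\to Z$ is generically finite, so after a birational modification (allowed since both pseudo-algebraic and pseudo-geometric hyperbolicity are birational invariants, cf.\ the Going-up / Going-down lemmas) we may assume $a\colon Y\to Z$ is finite. (2) Apply Kawamata's structure theorem (Theorem~\ref{thm:kawamata_fibn}) to the finite morphism $Y\to Z\hookrightarrow\mathrm{Alb}(Y)$: after a finite \'etale Galois base change we obtain a fiber-bundle structure $X'\to W$ with abelian-variety fibers $B'$ over a normal projective variety $W$ of general type equipped with a finite morphism $W\to \mathrm{Alb}(Y)/B$. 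Because $Y$ is itself of general type, the fiber dimension must be zero, i.e.\ $B'$ is a point, so $X'$ is finite over the subvariety $W$ of the abelian variety $\mathrm{Alb}(Y)/B$, and $W$ is of general type. (3) Now invoke Yamanoi: $W$, being of general type and finite over an abelian variety — more precisely, after the Ueno reduction a closed subvariety of general type of an abelian variety — satisfies $\mathrm{Sp}(W)\subsetneq W$, and Theorem~\ref{thm:V_is_mod} yields that $W$ is geometrically hyperbolic modulo $\mathrm{Sp}(W)$, which is precisely pseudo-geometric hyperbolicity of $W$. For the \emph{algebraic} hyperbolicity statement one uses the analogous Yamanoi degree bound for curves mapping to subvarieties of abelian varieties (the algebraic analogue of \cite[Corollary~1.1,~1.3]{Yamanoi1}), giving that $W$ is algebraically hyperbolic modulo $\mathrm{Sp}(W)$. (4) Descend back: finite \'etale covers and finite morphisms preserve both notions of pseudo-hyperbolicity (bend-and-break / Chevalley--Weil-type arguments, cf.\ Proposition~\ref{prop:alg_hyp_implies_geom_hyp} and the étale-invariance lemmas), so pseudo-(algebraic and geometric) hyperbolicity of $W$ passes up to $X'$, hence to $Y$.

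I expect the main obstacle to be step~(2)--(3): carefully tracking the exceptional loci through the finite \'etale cover $X'\to X$, the birational modifications, and the Kawamata fiber bundle, so that one genuinely gets a \emph{proper} closed subset $E\subsetneq Y$ modulo which $Y$ is (algebraically and geometrically) hyperbolic, rather than just hyperbolicity modulo all of $Y$. One must check that $\mathrm{Sp}(W)$ pulls back to a proper closed subset of $Y$ (it does, because $X'\to W$ is finite and the cover $X'\to X\to Y$ is finite étale, so preimages of proper closed subsets are proper closed), and that the constant in Demailly's inequality is uniform over the finitely many \'etale covers involved — here Lemma~\ref{lem:finiteness} (finiteness of bounded-degree \'etale covers) is what makes the uniform bound possible. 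The rest is a bookkeeping assembly of results already in the literature.
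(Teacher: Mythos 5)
There is a genuine gap at the heart of your reduction. A variety of maximal Albanese dimension and of general type is a generically finite, possibly \emph{ramified}, cover of its Albanese image; it is in general \emph{not} birational to a subvariety of an abelian variety. Your opening claim that ``generically finite onto its image'' implies ``$Y$ is birational to its Albanese image $Z$'' is false (the degree can be $>1$; e.g.\ a ramified double cover of an abelian variety branched along a smooth ample divisor). Your attempt to repair this via Kawamata's structure theorem does not achieve the desired reduction either: applying Theorem \ref{thm:kawamata_fibn} to a finite morphism $Y\to \mathrm{Alb}(Y)$ with $Y$ of general type forces the abelian subvariety $B$ to be trivial (otherwise the \'etale cover $X'$ would be a fiber bundle in positive-dimensional abelian varieties, contradicting that it is of general type), so the theorem simply hands you back $Y$ itself: $W\cong X'$ is again a general-type variety that is \emph{finite over}, not contained in, an abelian variety. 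Consequently Theorem \ref{thm:V_is_mod}, which concerns closed subvarieties $X\subset A$, does not apply to $W$, and the ``algebraic analogue'' you invoke for $W$ in step (3) is exactly the statement you are trying to prove. The content of the theorem for ramified covers is precisely the hard part of Yamanoi's work, and the paper's proof handles it by citing it directly: one first notes that pseudo-algebraic hyperbolicity implies pseudo-geometric hyperbolicity (Proposition \ref{prop:alg_hyp_implies_geom_hyp}), reduces to $k=\CC$ by a standard specialization argument, and then quotes \cite[Corollary~1.(1) and Corollary~1.(3)]{Yamanoi1}, which is stated for projective varieties of general type of maximal Albanese dimension, not only for subvarieties of abelian varieties.

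Two smaller points: you nowhere perform the reduction to $k=\CC$, which is needed because Yamanoi's results are analytic in nature (the paper's Theorem \ref{thm:V_is_mod} already has this built in, but your step (3) for algebraic hyperbolicity would again need it); and the ``descent'' in step (4) is stated loosely --- passing pseudo-algebraic hyperbolicity from a finite \'etale cover $X'$ back down to $X$ does work (lift curves and use Riemann--Hurwitz to compare genera), but it is not a consequence of Proposition \ref{prop:alg_hyp_implies_geom_hyp} or of Lemma \ref{lem:finiteness}, which play no role here. None of this, however, repairs the central issue: without Yamanoi's result for maximal Albanese dimension (as opposed to subvarieties of abelian varieties), the argument does not close.
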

\begin{proof}  
 As a pseudo-algebraically hyperbolic projective variety is pseudo-geometrically hyperbolic   (Proposition \ref{prop:alg_hyp_implies_geom_hyp}), it suffices to show that $Y$ is pseudo-algebraically hyperbolic over $k$. To do so,  by a standard specialization argument (see for instance \cite[Lemma~9.2]{vBJK}), we may and do assume that $k=\CC$. Then, the statement follows from  \cite[Corollary~1.(1) and Corollary~1.(3)]{Yamanoi1}.
\end{proof}

 \begin{corollary}\label{cor:yamanoi_special_covers}
Let $X$ be a   normal proper   variety  over $k$ of maximal Albanese dimension. Then the following statements are equivalent.
\begin{enumerate}
\item The Albanese map $a:X\to \mathrm{Alb}(X)$ is birational.
\item The variety $X$ is special.
\item The variety $X$ is geometrically-special over $k$. 
\end{enumerate}  
 \end{corollary}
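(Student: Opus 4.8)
The plan is to deduce the corollary from the structure results for Albanese maps of special and of geometrically-special varieties, by establishing the four implications $(1)\Rightarrow(2)$, $(1)\Rightarrow(3)$, $(2)\Rightarrow(1)$, and $(3)\Rightarrow(1)$; the equivalence of $(2)$ and $(3)$ then follows formally. The two implications starting from $(1)$ are immediate: if the Albanese map $a\colon X\to\mathrm{Alb}(X)$ is birational, then $X$ is birational to the abelian variety $\mathrm{Alb}(X)$, which is special and geometrically-special over $k$ by Proposition~\ref{prop:ab_var_are_special}. Since both of these properties are invariant under dominant rational maps of proper varieties in either direction, applying Lemma~\ref{lem:surj}.(1) and Lemma~\ref{lem:surj}.(4) to the birational map $\mathrm{Alb}(X)\dashrightarrow X$ yields that $X$ is special and geometrically-special.

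For the implications into $(1)$, I would first reduce to the case that $X$ is projective. Applying Chow's lemma and then normalizing produces a normal projective variety $X'$ with a proper birational morphism $X'\to X$; such an $X'$ is again of maximal Albanese dimension, with the same Albanese map as $X$ up to the canonical identification $\mathrm{Alb}(X')\cong\mathrm{Alb}(X)$, so that $a$ is birational if and only if $a'\colon X'\to\mathrm{Alb}(X')$ is, and, by Lemmas~\ref{lem:surj-1} and~\ref{lem:surj0}, $X'$ is special (resp.\ geometrically-special) if and only if $X$ is. So assume $X$ is projective. If $X$ is special, then Campana's Theorem~\ref{thm:alb_of_special} gives that $a$ is surjective with connected fibres; as $X$ has maximal Albanese dimension, $a$ is moreover generically finite onto its image, which by surjectivity is all of $\mathrm{Alb}(X)$. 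A surjective, generically finite morphism to a normal variety with connected fibres is birational --- its generic fibre is $\Spec$ of the function field of the target, a single reduced point --- so $a$ is birational, which proves $(2)\Rightarrow(1)$. The implication $(3)\Rightarrow(1)$ is proved in exactly the same way, with our structure result Theorem~\ref{thm:main_result_I} on the Albanese map of a geometrically-special normal projective variety used in place of Campana's theorem.

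Thus the corollary is a formal consequence of the structure theorems for Albanese maps of special varieties (Theorem~\ref{thm:alb_of_special}) and of geometrically-special varieties (Theorem~\ref{thm:main_result_I}); there is no serious obstacle internal to the corollary. The only point requiring a word of care is the step ``surjective $+$ generically finite $+$ connected fibres $\Rightarrow$ birational'', where ``connected fibres'' must be read in the scheme-theoretic sense $a_*\mathcal{O}_X=\mathcal{O}_{\mathrm{Alb}(X)}$. The genuine content --- and the only hard input, already in place --- is Theorem~\ref{thm:main_result_I}, whose proof runs through Yamanoi's results on the pseudo-hyperbolicity of subvarieties and of orbifold pairs of abelian varieties.
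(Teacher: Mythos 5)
Your implications $(1)\Rightarrow(2)$ and $(1)\Rightarrow(3)$, the reduction to the projective case, and the direction $(2)\Rightarrow(1)$ via Campana's Theorem~\ref{thm:alb_of_special} (an external result of Campana) are all fine. The genuine problem is $(3)\Rightarrow(1)$: you invoke Theorem~\ref{thm:main_result_I}, but in this paper the corollary you are proving is itself a step on the way to Theorem~\ref{thm:main_result_I}. Concretely, the connectedness of the fibres of the Albanese map of a geometrically-special variety (Corollary~\ref{cor:geom_special_albanese}) is deduced by applying Corollary~\ref{cor:yamanoi_special_covers} to the Stein factorization of the Albanese map, and the proof of Theorem~\ref{thm:main_result_I} rests on Corollary~\ref{cor:geom_special_albanese}. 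So your argument for $(3)\Rightarrow(1)$ is circular: at the point where this corollary is proved, the only available structure result in the geometrically-special case is the surjectivity of the Albanese map (Corollary~\ref{cor:surjectivity_alb}); connectedness of its fibres is precisely what this corollary is designed to produce, and no independent proof of it is offered in your proposal.

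The missing content (the paper's actual argument, which also treats $(2)\Rightarrow(1)$ uniformly without needing Campana's theorem) is: by Corollary~\ref{cor:surjectivity_alb} the Albanese map is surjective; take its Stein factorization $X\to Z\to\mathrm{Alb}(X)$, so that $Z$ is a normal projective geometrically-special variety (Lemma~\ref{lem:surj0}) which is finite surjective over $\mathrm{Alb}(X)$. Apply Kawamata's fibration theorem (Theorem~\ref{thm:kawamata_fibn}) to the finite morphism $Z\to\mathrm{Alb}(X)$: after a finite \'etale Galois cover $Z'\to Z$, the variety $Z'$ is a fibre bundle with abelian fibres $B'$ over a normal projective variety $Y$ of general type, finite over a quotient abelian variety, hence of maximal Albanese dimension. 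By Yamanoi's theorem (Theorem~\ref{thm:yamanoi}), $Y$ is pseudo-geometrically hyperbolic; since $Z'$ is geometrically-special (Lemma~\ref{lem:cw_geom_special}) and dominates $Y$, Corollary~\ref{cor:image_is_not_psgeom} forces $\dim Y=0$. Hence $Z'=B'$ is an abelian variety, so $Z\to\mathrm{Alb}(X)$ is finite \'etale, $Z$ is an abelian variety, and the universal property of the Albanese gives $Z=\mathrm{Alb}(X)$, i.e.\ the Albanese map has connected fibres. Only then does your final step apply: maximal Albanese dimension together with surjectivity and connected fibres yields birationality. Without this Kawamata--Yamanoi argument (or some substitute for it), the equivalence with $(3)$ is not established.
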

 \begin{proof} By Proposition  \ref{prop:ab_var_are_special} and the birational invariance of geometric-specialness (resp. specialness) (see Lemma \ref{lem:surj}), we have that $(1)\implies (2)$ and $(1) \implies (3)$.  
 
 Now, assume  that $X$ is geometrically-special  (resp. special).
 Then, by Corollary  \ref{cor:surjectivity_alb}  (resp. Theorem \ref{thm:alb_of_special}), the Albanese map  $a:X\to \mathrm{Alb}(X)$  is surjective. Let $X\to Z\to \mathrm{Alb}(X)$ be its Stein factorization; note that $Z$ is a normal variety over $k$ and that $Z\to \mathrm{Alb}(X)$ is a finite surjective morphism. Since $X$ is geometrically-special (resp. special) and $X\to Z$ is surjective, it follows from Lemma \ref{lem:surj}  that $Z$ is geometrically-special (resp. special) . 
 We now apply Kawamata's fibration theorem (Theorem \ref{thm:kawamata_fibn}) and see that the following data exists.
 \begin{enumerate}
\item An abelian subvariety $B$ of $A$;
\item finite \'etale Galois covers $Z'\to Z$  and $B'\to B$;
\item  a normal projective variety $Y$ of general type over $k$;
\item    a finite morphism $Y\to A/B$ with $A/B$ the quotient of $A$ by $B$ such that  $Z'$ is a fiber bundle over $Y$ with fibers $B'$ and with translations by $B'$ as structure group.
\end{enumerate}  
Since $Y$ is of general type, it follows from Yamanoi's theorem (Theorem \ref{thm:yamanoi}) that $Y$ is pseudo-geometrically hyperbolic over $k$.

Now, since $Z$ is geometrically-special (resp. special) and $Z'\to Z$ is finite \'etale, it follows from Proposition \ref{thm:fin_et}  that $Z'$ is geometrically-special (resp. special).
 Since $Z'$ is geometrically-special (resp. special) and surjects onto the variety $Y$, it follows from Lemma \ref{lem:surj}  that $Y$ is also geometrically-special (resp. special).  Thus, since $Y$ is a geometrically-special   pseudo-geometrically hyperbolic projective variety (resp. special variety of general type), we conclude that $Y$ is zero-dimensional. This implies that $Z'=B'$ is an abelian variety, so that the composed (finite surjective) morphism $Z'\to Z\to\mathrm{Alb}(X)$ is finite \'etale. We conclude that $Z\to \mathrm{Alb}(X)$ is finite \'etale, so that $Z$ is an abelian variety \cite[Section~IV.18]{MumAb}. It now follows from the universal property of the Albanese map that $Z=\mathrm{Alb}(X)$, so that the morphism $X\to \mathrm{Alb}(X)$ has connected fibres. Finally, since $X$ has maximal Albanese dimension and the morphism $X\to \mathrm{Alb}(X)$ is surjective with connected fibres, we conclude that it is birational. This proves that $(3)\implies (1)$ (resp. $(2)\implies (1)$).
 \end{proof}
 
\begin{corollary}\label{cor:geom_special_albanese}  
If $X$ is a geometrically-special normal projective variety over $k$, then the Albanese map $X\to \mathrm{Alb}(X)$ is surjective  and has  connected fibres.
\end{corollary}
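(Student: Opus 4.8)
The surjectivity is precisely the content of Corollary~\ref{cor:surjectivity_alb}, so the plan is to concentrate on the connectedness of the fibres; the argument is exactly the part of the proof of Corollary~\ref{cor:yamanoi_special_covers} that does not use the maximal Albanese dimension hypothesis. Write $A := \mathrm{Alb}(X)$ and let $X \xrightarrow{p} Z \xrightarrow{g} A$ be the Stein factorization of the (surjective and proper) Albanese map. Then $Z$ is a normal projective variety over $k$, the morphism $p$ has connected fibres, and $g$ is finite and surjective. Since $Z$ is the image of the geometrically-special variety $X$ under the surjective morphism $p$, Lemma~\ref{lem:surj}.(4) shows that $Z$ is geometrically-special over $k$. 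It therefore suffices to prove that $g$ is an isomorphism, for then $X \to A$ coincides with $p$ and hence has connected fibres.

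Next I would feed the finite morphism $g : Z \to A$ into Kawamata's fibration theorem (Theorem~\ref{thm:kawamata_fibn}). This produces an abelian subvariety $B \subset A$, finite \'etale Galois covers $Z' \to Z$ and $B' \to B$, a normal projective variety $Y$ of general type over $k$, and a finite morphism $Y \to A/B$ such that $Z'$ is a fibre bundle over $Y$ with fibre $B'$ and structure group the translations of $B'$. Because $Y$ admits a finite morphism to the abelian variety $A/B$, it is of maximal Albanese dimension, and being of general type it is pseudo-geometrically hyperbolic over $k$ by Theorem~\ref{thm:yamanoi}. On the other hand, $Z' \to Z$ is finite \'etale and $Z$ is geometrically-special, so $Z'$ is geometrically-special by Proposition~\ref{thm:fin_et}.(4); as $Z'$ surjects onto $Y$ through the fibre bundle projection, Lemma~\ref{lem:surj}.(4) shows that $Y$ is geometrically-special. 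A positive-dimensional geometrically-special variety is not pseudo-geometrically hyperbolic (Corollary~\ref{cor:image_is_not_psgeom}), so $\dim Y = 0$.

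Since $\dim Y = 0$, the fibre bundle $Z' \to Y$ forces $Z' \cong B'$, so $Z'$ is an abelian variety; a dimension count (recall that $g$ is finite and surjective, so $\dim Z = \dim A$) gives $\dim B = \dim A$ and hence $B = A$, and the finite surjective composite $Z' \to Z \xrightarrow{g} A$ is then finite \'etale, being (a translate of) the cover $B' \to B = A$. As $Z' \to Z$ is finite \'etale and surjective and $Z' \to A$ is finite \'etale, descent of \'etaleness along the faithfully flat morphism $Z' \to Z$ shows that $g : Z \to A$ is finite \'etale. A connected finite \'etale cover of an abelian variety is again an abelian variety (see \cite[Section~IV.18]{MumAb}), so $Z$ is an abelian variety and $g$ is an isogeny; the universal property of the Albanese map, applied to the morphism $X \to Z$ to an abelian variety compatible with basepoints, then forces $g$ to be an isomorphism, which completes the proof. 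The only substantial inputs are Kawamata's and Yamanoi's theorems, which are invoked as black boxes, so I do not expect a genuine obstacle here: the statement is really a weakening of Corollary~\ref{cor:yamanoi_special_covers}, and everything else is bookkeeping around the Stein factorization and the Kawamata fibration.
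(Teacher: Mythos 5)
Your proof is correct and follows essentially the same route as the paper: the paper simply cites Corollary \ref{cor:yamanoi_special_covers} (whose proof is exactly the Stein-factorization\,+\,Kawamata\,+\,Yamanoi argument you re-derive) and concludes via Zariski's Main Theorem, whereas you inline that argument and finish with the universal property of the Albanese map. No gaps; the ingredients and their deployment are the same as in the paper.
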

\begin{proof}  
 Let $a:X\to \mathrm{Alb}(X)$ be the Albanese map. Since $X$ is geometrically-special, the (proper) morphism $a$ is surjective (Corollary \ref{cor:surjectivity_alb}). Let $X\to Z\to \mathrm{Alb}(X)$ be its Stein factorization; note that $Z$ is a normal geometrically-special projective variety over $k$ and that $Z\to \mathrm{Alb}(X)$ is a finite surjective morphism. Clearly, $Z$ has maximal Albanese dimension, so that $Z\to \mathrm{Alb}(X)$ is birational by Corollary \ref{cor:yamanoi_special_covers}, and thus an isomorphism by Zariski's Main Theorem, as required.
\end{proof}
 
To complete the proof of Theorem \ref{thm:main_result_I}, it remains to show that the Albanese map $X\to \mathrm{Alb}(X)$ of a geometrically-special variety $X$ has no multiple fibers in codimension one.   To prove this fact, we   introduce  geometrically-special orbifold pairs and the orbifold extension of Demailly's notion of algebraic hyperbolicity.

\subsection{Orbifold pairs}  
   A $\QQ$-divisor $\Delta$ on a smooth projective   variety $X$ over $k$ is an \emph{orbifold divisor (on $X$)} if $\Delta = \sum_{i=1}^r (1-\frac{1}{m_i})\Delta_i$, where $m_1,\ldots, m_r \geq 1$ are rational numbers    and $\Delta_1,\ldots, \Delta_r$ are prime  divisors; the rational number $m_i$ is referred to as the \emph{multiplicity of $\Delta_i$ in $\Delta$}.
A pair $(X,\Delta)$ with $X$ a smooth projective variety over $k$ and   $\Delta = \sum_{i=1}^r a_i \Delta_i$ a $\mathbb{Q}$-divisor with $0\leq a_i\leq 1$ is an \emph{orbifold (or orbifold-pair)}. Note that $\Delta$ is an orbifold divisor on $X$ in this case. 

Let $(X,\Delta_X)$ and $(Y,\Delta_Y)$ be orbifold pairs. We let   $m_X$ and $m_Y$ be the multiplicities of $\Delta_X$ and $\Delta_Y$, respectively. (In particular, for a prime divisor $\Delta_i\subset X$ contained in the support of $\Delta_X$, the rational number $m_X(\Delta_i) $ is the multiplicity of $\Delta_i$  in $\Delta_X$.) We define an \emph{orbifold morphism} $(Y,\Delta_Y) \to (X,\Delta_X)$ to be a morphism $f:Y\to X$ of varieties over $k$  such that $f(Y)\not\subset \lceil \Delta_X \rceil$ and,  for every prime divisor $D$ on $X$ and  prime divisor $E$ on $Y$, the inequality  $t \cdot m_Y(E) \geq m_X(D)$ holds, where $t$ is defined by the relation $f^*(D)=t \cdot E+R$ with $R$ an effective divisor not containing $E$.  

Following standard conventions, we will identify a smooth projective  variety $X$ with the orbifold $(X,0)$, where $0$ is the empty divisor.

In \cite{Ca11}, Campana extended the notions of special variety (resp. Brody-special variety) to the setting of orbifold pairs, and then  conjectured that an orbifold pair $(X,\Delta)$ over $\CC$ is  Brody-special if and only if it is special. 
He also extended the notion of   arithmetically-special variety to the orbifold setting, and formulated the orbifold analogue of his arithmetic conjecture  (Conjecture \ref{conj:campana_ar}).    In this section we  extend the notions of geometrically-special and pseudo-geometric hyperbolicity  to the setting of orbifold pairs.

 \begin{definition}
 An orbifold pair $(X,\Delta)$ is \emph{geometrically-special over $k$} if,  for every dense open subset $U\subset X$, there exists a smooth projective connected curve $C$ over $k$, a point $c$ in $C(k)$, a point $u$ in $U(k)\setminus \Delta$, and a sequence of pairwise distinct orbifold morphisms $f_i:C\to (X,\Delta)$ with $f_i(c) = u$ for $i=1,2,\ldots$ such that $C\times X $ is covered by the graphs $\Gamma_{f_i}$ of these maps, i.e., the closure of $\cup_{i=1}^{\infty} \Gamma_{f_i}$  in $C\times X$ equals $ C\times X$.  (As mentioned above,  we identify $C$ with the orbifold $(C,0)$.)
 \end{definition}
 
 \begin{remark}  Let $X$ be a smooth projective connected variety over $k$ and let $D$ be an integral divisor. For $m\in \mathbb{Z}_{\geq 1}$, 
 define $\Delta_m := (1-\frac{1}{m})D$ and  $X_m := (X,\Delta_m)$. Then the orbifold pair $X_1$ is geometrically-special over $k$ if and only if $X$ is geometrically-special.    Moreover, for all $m\geq 1$, we have that
    \[
 X_m \ \textrm{is} \ \textrm{geometrically-special}   \implies X_{m-1} \ \textrm{is} \ \textrm{geometrically-special}  
    \]
 
 \end{remark}
 
 By adapting the  proof of Lemma \ref{lem:surj0} to the orbifold setting, one obtains the following \emph{stronger} statement. 
 
 \begin{lemma}\label{lem:surj2}
 Let $(X,\Delta)\to (X',\Delta')$ be a surjective morphism of orbifold pairs. If $(X,\Delta)$ is geometrically-special over $k$, then $(X',\Delta')$ is geometrically-special over $k$. \qed
 \end{lemma}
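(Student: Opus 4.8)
The plan is to run the proof of Lemma~\ref{lem:surj0} essentially verbatim, inserting two orbifold-theoretic observations: one must choose the base point of the covering set so that its image avoids $\lceil\Delta'\rceil$, and one must check a ``chain rule'' guaranteeing that the composite of an orbifold morphism out of a curve with the orbifold morphism $f\colon(X,\Delta)\to(X',\Delta')$ is again an orbifold morphism.

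First I would fix a dense open $V\subseteq X'$ and set $U:=f^{-1}(V)$; since $f$ is surjective and $\lceil\Delta'\rceil\subsetneq X'$ is a proper closed subset, the set $U':=U\setminus f^{-1}(\lceil\Delta'\rceil)$ is a dense open subset of $X$. Applying the hypothesis that $(X,\Delta)$ is geometrically-special to $U'$ yields a smooth projective connected curve $C$, a point $c\in C(k)$, a point $u\in U'(k)\setminus\lceil\Delta\rceil$, and pairwise distinct orbifold morphisms $f_i\colon C\to(X,\Delta)$ with $f_i(c)=u$ and $\overline{\cup_i\Gamma_{f_i}}=C\times X$. Put $v:=f(u)\in V(k)$ and $g_i:=f\circ f_i\colon C\to X'$. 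Applying the surjective morphism $\mathrm{id}_C\times f\colon C\times X\to C\times X'$, which carries $\Gamma_{f_i}$ onto $\Gamma_{g_i}$, gives $C\times X'=(\mathrm{id}_C\times f)(\overline{\cup_i\Gamma_{f_i}})\subseteq\overline{\cup_i\Gamma_{g_i}}$, so the graphs of the $g_i$ cover $C\times X'$; moreover $g_i(c)=v$, and $v\notin\lceil\Delta'\rceil$ because $u\notin f^{-1}(\lceil\Delta'\rceil)$.

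The step I expect to need the most care is verifying that each $g_i$ is an orbifold morphism $C\to(X',\Delta')$. Here $g_i(C)\not\subseteq\lceil\Delta'\rceil$ since $g_i(c)=v\notin\lceil\Delta'\rceil$, so only the multiplicity inequality remains. Given a prime divisor $D'$ on $X'$ and a point $E$ of $C$ with $g_i(E)\in D'$, I would write $f^*D'=\sum_D t_D\,D$; then $f_i(E)$ lies on some prime component $D$ of $f^*D'$, and from the equality of effective divisors $g_i^*D'=f_i^*(f^*D')=\sum_D t_D\,f_i^*D$ one gets $\ord_E(g_i^*D')\ge t_D\,\ord_E(f_i^*D)$. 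Since $f_i$ is an orbifold morphism and $m_C(E)=1$, one has $\ord_E(f_i^*D)\ge m_X(D)$; since $f$ is an orbifold morphism, $t_D\,m_X(D)\ge m_{X'}(D')$; combining gives $\ord_E(g_i^*D')\,m_C(E)=\ord_E(g_i^*D')\ge m_{X'}(D')$, as required. Finally, when $\dim X'\ge 1$ finitely many curve-graphs cannot cover $C\times X'$ for dimension reasons, so infinitely many of the $g_i$ are distinct; discarding repetitions (which leaves $\cup_i\Gamma_{g_i}$ unchanged) produces a covering set of pairwise distinct orbifold morphisms based at $v\in V(k)\setminus\lceil\Delta'\rceil$, while the case $\dim X'=0$ is trivial. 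Since $V$ was an arbitrary dense open, $(X',\Delta')$ is geometrically-special. The only genuinely new input over Lemma~\ref{lem:surj0} is the stability of orbifold morphisms under composition used above, and that is where the (modest) work lies.
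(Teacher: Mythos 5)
Your proposal is correct and follows exactly the route the paper intends: the paper gives no written proof beyond the instruction to adapt Lemma \ref{lem:surj0} to the orbifold setting, which is precisely what you do — compose the covering set with $f$, note the images of the graphs still cover $C\times X'$, and verify the orbifold chain rule $\ord_E(g_i^\ast D')\ge t_D\,m_X(D)\ge m_{X'}(D')$ so that the composites are orbifold morphisms based at a point off $\lceil\Delta'\rceil$. Your careful choice of base point avoiding $f^{-1}(\lceil\Delta'\rceil)$ and the distinctness/dimension remark are exactly the details the paper leaves implicit.
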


  A geometrically-special orbifold pair is as far away as possible from being ``pseudo-geometrically hyperbolic'' in the following sense  (generalizing Definition \ref{def:psgeomhyp2}).

 \begin{definition}
If  $(X,\Delta)$ is an orbifold pair over $k$ and $Z\subset X$ is a closed subset containing $\Delta$, then $(X,\Delta)$ is \emph{geometrically hyperbolic modulo $Z$}  if, for every $x$ in $X(k)\setminus Z$, every smooth projective connected curve $C$ over $k$, and every  $c$ in $C(k)$, the set of orbifold morphisms $f:C\to (X,\Delta)$ with $f(c) = x$ is finite. We say that an orbifold pair $(X,\Delta)$ over $k$ is \emph{pseudo-geometrically hyperbolic over $k$} if there is a proper closed subset $Z\subsetneq X$ containing $\Delta$ such that $(X,\Delta)$ is geometrically hyperbolic modulo $Z$.
 \end{definition}

 \subsection{Pseudo-algebraic hyperbolicity}\label{section:psalg} We have already seen the relation between pseudo-geometric hyperbolicity and pseudo-algebraic hyperbolicity (see Proposition \ref{prop:alg_hyp_implies_geom_hyp}). In this section we extend this relation to the orbifold setting following \cite{Rou10, Rou12}.  

\begin{definition} Let $(X,\Delta)$ be an orbifold pair, and let $E\subset X$ be a closed subset. Then    $(X,\Delta)$  is \emph{algebraically hyperbolic modulo $E$ over $k$} if, for every ample line bundle $L$ on $X$, there is a constant $\alpha_{X,\Delta,L,E}$ such that, for every smooth projective connected curve $C$ over $k$, and every orbifold morphism $f:C\to (X,\Delta)$ with $f(C)\not\subset \Delta\cup E$, the inequality
\[
\deg_C f^\ast L \leq \alpha_{X,\Delta,L,E} \cdot \mathrm{genus}(C)
\] holds.
\end{definition}

\begin{remark}[Independence of choice of ample bundle]\label{remark:independence_of_choice_of_ample0}
Let $(X,\Delta)$ be an orbifold pair over $k$, and let $E\subset X$ be a closed subset. Then  $(X,\Delta)$ is algebraically hyperbolic modulo $E$ over $k$ if and only if there exists an ample line bundle $L$ on $X$ and a constant $\alpha_{X,\Delta,L,E}$ such that, for every smooth projective connected curve $C$ over $k$, and every orbifold morphism $f:C\to (X,\Delta)$ with $f(C)\not\subset E$,  the inequality
$
\deg_C f^\ast L \leq \alpha_{X,\Delta,L,E} \cdot \mathrm{genus}(C).
$  holds.
\end{remark}

\begin{remark}[A big line bundle suffices]\label{remark:independence_of_choice_of_ample}
 An orbifold pair $(X,\Delta)$ over $k$ is pseudo-algebraically hyperbolic over $k$ if and only if there exist a big line bundle $L$ on $X$,  a proper closed subset $E\subset X$, and  a constant $\alpha_{X,\Delta,L,E}$ such that, for every smooth projective connected curve $C$ over $k$, and every orbifold morphism $f:C\to (X,\Delta)$ with $f(C)\not\subset E$,  the inequality
$
\deg_C f^\ast L \leq \alpha_{X,\Delta,L,E} \cdot \mathrm{genus}(C)
$  holds. 
To prove this, write $L= A + F$ with $A$ ample and $F$ effective. Note that $F$ contains the augmented base locus of $L$. Then, by Remark \ref{remark:independence_of_choice_of_ample0}, we have that $(X,\Delta)$ is algebraically hyperbolic modulo the proper closed subset $E\cup F$.
\end{remark}

 The following  proposition   generalizes Proposition \ref{prop:alg_hyp_implies_geom_hyp} to the setting of orbifold pairs under the additional assumption that the rational curves on the space $X$ are contained in a proper closed subset.

\begin{proposition} \label{prop:from_alg_hyp_to_geom_hyp}
Let $(X,\Delta)$ be an orbifold pair and let $E\subset X$ be a closed subset such that every non-constant morphism $\mathbb{P}^1_k\to X$ factors over $E$. If  $(X,\Delta)$ is algebraically hyperbolic modulo $E$, then $(X,\Delta)$ is geometrically-hyperbolic modulo $E$.
\end{proposition}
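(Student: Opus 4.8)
The plan is to imitate the classical bend-and-break argument that proves Proposition \ref{prop:alg_hyp_implies_geom_hyp}, but now in the orbifold category, using the hypothesis on rational curves exactly where it is needed. So suppose, for contradiction, that $(X,\Delta)$ is not geometrically hyperbolic modulo $E$. Then there exist a point $x\in X(k)\setminus E$, a smooth projective connected pointed curve $(C,c)$ over $k$, and an infinite sequence of pairwise distinct orbifold morphisms $f_i\colon C\to (X,\Delta)$ with $f_i(c)=x$. Fix an ample line bundle $L$ on $X$. Since $(X,\Delta)$ is algebraically hyperbolic modulo $E$ and each $f_i(C)\not\subset E$ (as $x\notin E$), we get the uniform bound $\deg_C f_i^\ast L\le \alpha_{X,\Delta,L,E}\cdot\mathrm{genus}(C)$. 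Thus the $f_i$ all lie in finitely many connected components of the Hom-scheme $\underline{\Hom}((C,c),(X,x))$; in particular some component $H$ of this scheme has positive dimension, because it contains infinitely many of the $f_i$.

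Next I would run bend-and-break on a positive-dimensional family of pointed maps. Fixing a point $c'\neq c$ of $C$ and a general point in $H$, one obtains a non-constant morphism from a (possibly different) curve through the family, and by the standard bend-and-break / Mori lemma one degenerates the family to produce a non-constant rational curve $g\colon \mathbb{P}^1_k\to X$ passing through $x$ (one uses that $c$ is a fixed base point to keep $x$ in the image). Here is where the hypothesis enters: by assumption every non-constant morphism $\mathbb{P}^1_k\to X$ factors through $E$, so $x=g(\text{some point})\in E$, contradicting $x\notin E$. Therefore no such infinite sequence exists, and $(X,\Delta)$ is geometrically hyperbolic modulo $E$.

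One technical point deserves care: in the orbifold setting the relevant Hom-scheme is the (countable) union of quasi-projective schemes parametrising orbifold morphisms, and one should check that the degree bound coming from algebraic hyperbolicity modulo $E$ indeed controls the Hilbert polynomial of the graphs $\Gamma_{f_i}\subset C\times X$ (so that boundedness of degree really does force the maps into finitely many components). This is routine: the graph of $f\colon C\to X$ has class determined by $\deg_C f^\ast L$ and the fixed $[C]$, so bounded degree gives bounded Hilbert polynomial, hence finitely many components of the Hom-scheme. Another point: the bend-and-break output is a rational curve in the underlying space $X$, and we do \emph{not} need it to be an orbifold morphism — the hypothesis is a statement about all non-constant morphisms $\mathbb{P}^1_k\to X$, so this is exactly the form we want.

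The main obstacle is the bend-and-break step itself, i.e.\ producing the rational curve through the \emph{marked} point $x$ from a positive-dimensional family of pointed maps all passing through $x$. This is where one must invoke the precise form of Mori's lemma for families of maps with a base point held fixed (as in \cite[Theorem~1.12]{JXie} or the analogous statements used for Proposition \ref{prop:alg_hyp_implies_geom_hyp}); the orbifold structure $\Delta$ plays no role in this step, since we have already extracted the uniform degree bound and are now working purely with the morphisms to $X$. Once the rational curve through $x$ is produced, the contradiction with the hypothesis on $E$ is immediate.
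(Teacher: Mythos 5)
Your proposal is correct and follows essentially the same route as the paper: both arguments use the degree bound from algebraic hyperbolicity modulo $E$ (applicable because $x\notin E$ forces $f_i(C)\not\subset E\cup\Delta$) to land infinitely many pointed maps in a positive-dimensional component of $\underline{\Hom}_k((C,c),(X,x))$, and then invoke Mori's bend-and-break to produce a rational curve through $x$, contradicting the hypothesis that all rational curves lie in $E$.
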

\begin{proof} We may and  do assume that $E\neq X$. Suppose that $(X,\Delta)$ is not geometrically-hyperbolic modulo $E$. Then, we can choose   a smooth proper connected curve $C$ over $k$, a point $c\in C(k)$, a point $x\in X(k)\setminus E\cup \Delta$, and  an infinite sequence of orbifold morphisms $f_i:C\to (X,\Delta)$ with $f_i(c) = x$. Note that, as $x\in X(k)\setminus E$, we have that $f_i(C)\not\subset E$. Therefore, if $L$ is an ample line bundle, since $X$ is algebraically-hyperbolic modulo $E$, there is a real number $\alpha$ depending only on $X, \Delta, E, L$ such that, for every $i=1,2,\ldots$, the inequality
\[
\deg_C f_i^\ast L \leq \alpha \cdot \mathrm{genus}(C)
\] holds. Thus, replacing $(f_i)_{i=1}^\infty$ by a suitable subsequence if necessary, we may and do assume that $d:=\deg_C f_i^\ast L$ is independent of $i$. Then, the component $\underline{\Hom}_k^d((C,c),(X,x))$ parametrizing 
 pointed morphisms $(C,c)\to (X,x)$ over $k$  of degree $d$ is a positive-dimensional variety over $k$ (as it is quasi-projective and has infinitely many  elements). This implies by Mori's bend-and-break that $X$ has a rational curve passing through $x$ (see \cite[Proposition~3.1]{DebarreBook}), contradicting the fact that every rational curve in $X$ is contained in $E$.
\end{proof}

\begin{definition}
An orbifold pair $(X,\Delta)$ is \emph{pseudo-algebraically hyperbolic over $k$} if there is a proper closed subset $E\subsetneq X$ containing $\Delta$ such that $(X,\Delta)$ is algebraically  hyperbolic modulo $E$.
\end{definition}

 \subsection{Lang's conjecture for orbifold divisors on abelian varieties}\label{section:yamanoi_orb}

  The following result of Yamanoi confirms  the  expectation that abelian varieties endowed with a big orbifold divisor are pseudo-algebraically hyperbolic in the orbifold sense.

 \begin{theorem}[Yamanoi]\label{thm:yamanoi_erwan}
 Let $\Delta$ be a big orbifold divisor on an abelian variety $A$ over $k$.     Then, the orbifold pair  $(A,\Delta)$   is pseudo-algebraically hyperbolic over $k$.
 \end{theorem}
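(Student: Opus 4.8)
The plan is to deduce the theorem from Yamanoi's Second Main Theorem \emph{with truncation level one} for holomorphic curves in abelian varieties, in its algebraic incarnation for maps from projective curves --- this is the same circle of results from \cite{Yamanoi1} that underlies Theorem~\ref{thm:yamanoi} --- the point being that the defining ramification condition of an orbifold morphism converts the truncated counting function into a negligible term. I would begin with two reductions. First, by a standard specialization argument (as in the proof of Theorem~\ref{thm:yamanoi}, see \cite[Lemma~9.2]{vBJK}) one may assume $k=\CC$. Second, write $D:=\sum_{i=1}^r\Delta_i$ for the reduced support of $\Delta=\sum_{i=1}^r(1-\tfrac{1}{m_i})\Delta_i$, so that $D-\Delta=\sum_{i=1}^r\tfrac1{m_i}\Delta_i$ is an effective $\QQ$-divisor. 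Since bigness is an open condition, fix $\varepsilon>0$ small enough that all the numbers $(1-\tfrac1{m_i})-\varepsilon$ are positive and the $\QQ$-divisor $\Delta-\varepsilon D$ is still big; write $\Delta-\varepsilon D=A'+E'$ with $A'$ an ample $\QQ$-divisor and $E'$ an effective $\QQ$-divisor, and fix an ample line bundle $L$ on $A$ together with a constant $c>0$ such that $A'-cL$ is nef. By Remark~\ref{remark:independence_of_choice_of_ample} it then suffices to produce a proper closed subset $E\subsetneq A$ containing $\supp\Delta$ and a constant $\alpha$ such that $\deg_C f^\ast L\leq\alpha\cdot\mathrm{genus}(C)$ for every orbifold morphism $f\colon C\to(A,\Delta)$ from a smooth projective connected curve $C$ with $f(C)\not\subset E$; and since abelian varieties contain no rational curves, the case $\mathrm{genus}(C)=0$ is trivial, so one may assume $\mathrm{genus}(C)\geq 1$.

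The main step is to invoke Yamanoi's theorem in the following shape: there is a proper closed subset $Z_\varepsilon\subsetneq A$ and a constant $C_\varepsilon>0$ such that for every smooth projective connected curve $C$ and every morphism $f\colon C\to A$ with $f(C)\not\subset Z_\varepsilon$ one has
\[
\deg_C f^\ast\mathcal{O}_A(D)\ \leq\ N^{[1]}_f(C,D)\ +\ \varepsilon\,\deg_C f^\ast\mathcal{O}_A(D)\ +\ C_\varepsilon\cdot\mathrm{genus}(C),
\]
where $N^{[1]}_f(C,D)=\#\{P\in C: f(P)\in\supp D\}$ is the truncated counting function. Now take $E:=Z_\varepsilon\cup\supp E'\cup\supp\Delta$ and let $f\colon C\to(A,\Delta)$ be an orbifold morphism with $f(C)\not\subset E$; in particular $f(C)\not\subset\Delta_i$ for all $i$, so the pullbacks $f^\ast\Delta_i$ are well-defined effective divisors. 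Because $f$ is an orbifold morphism, every $P\in C$ with $f(P)\in\Delta_i$ satisfies $\mathrm{mult}_P(f^\ast\Delta_i)\geq m_i$, whence
\[
N^{[1]}_f(C,D)\ \leq\ \sum_{i=1}^r\#\{P\in C:f(P)\in\Delta_i\}\ \leq\ \sum_{i=1}^r\frac{1}{m_i}\deg_C f^\ast\Delta_i\ =\ \deg_C f^\ast(D-\Delta).
\]
Substituting into Yamanoi's inequality and using $\deg_C f^\ast\mathcal{O}_A(D)-\deg_C f^\ast(D-\Delta)=\deg_C f^\ast\Delta$ yields $\deg_C f^\ast(\Delta-\varepsilon D)\leq C_\varepsilon\cdot\mathrm{genus}(C)$. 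Finally, since $\Delta-\varepsilon D=A'+E'$ with $f(C)\not\subset\supp E'$, and $A'-cL$ is nef, we get $\deg_C f^\ast(\Delta-\varepsilon D)\geq\deg_C f^\ast A'\geq c\cdot\deg_C f^\ast L$, so $\deg_C f^\ast L\leq (C_\varepsilon/c)\cdot\mathrm{genus}(C)$, which is the required bound with $\alpha:=C_\varepsilon/c$.

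In this strategy the two reductions and the bookkeeping with effective divisors on $A$ are entirely routine; the single substantial ingredient, and the step I expect to be the real obstacle, is the truncated ($N^{[1]}$) Second Main Theorem for abelian varieties together with its translation into a degree inequality for maps from projective curves --- one also has to check that the version available in \cite{Yamanoi1} applies to the (possibly non-ample, but big) divisor $D=\sum\Delta_i$ and produces a proper closed exceptional locus valid uniformly in $C$. I note that a more elementary attack --- passing to a Kawamata-type cyclic cover of (an isogeny of) $A$ branched along the $\Delta_i$ and applying Theorem~\ref{thm:yamanoi} on the resulting variety of general type --- does \emph{not} work naively, because an orbifold morphism ramifying to order between $m_i$ and $2m_i$ over $\Delta_i$ forces ramification of the induced cover $\widetilde C\to C$ that is only bounded by $\deg_C f^\ast\Delta$, and hence is not controlled by $\mathrm{genus}(C)$; this is precisely the difficulty that Yamanoi's theorem is needed to overcome. (It is quite possible that \cite{Yamanoi1} already records the orbifold statement as a corollary, in which case the main step above can be shortcut to a citation, exactly as for Theorem~\ref{thm:yamanoi}.)
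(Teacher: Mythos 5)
Your proposal is correct and follows essentially the same route as the paper: reduce to $k=\CC$, invoke Yamanoi's truncation-one Second Main Theorem from \cite{Yamanoi1} (the paper cites \cite[Corollary~2]{Yamanoi1}, applied to each integral component $D_i$ separately with error term $\tfrac{\epsilon}{r}\deg f^\ast L$, rather than your single application to the reduced support $D$ with error $\varepsilon\deg f^\ast D$ --- a purely cosmetic difference), use the orbifold condition to bound the truncated counting function by $\sum_i \tfrac{1}{m_i}\deg f^\ast D_i$, and conclude from bigness of the perturbed divisor via Remark~\ref{remark:independence_of_choice_of_ample}. The ingredient you flagged as the potential obstacle is exactly the result the paper cites, so the anticipated shortcut to a citation is indeed available.
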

  
 \begin{proof}  We may and do assume that $k=\CC$. Let us show how the result follows from Yamanoi's theorem \cite[Corollary~2]{Yamanoi1}.  
 
 Write  $\Delta=\sum_{i=1}^r \left(1-\frac{1}{m_i}\right) D_i$, where  $m_1,\ldots, m_r> 1$ are  rational numbers and $D_1,\ldots, D_r$ are integral divisors, respectively.
   Let $L$ be an ample line bundle on $A$. Now, since $\Delta$ is big, we may choose a rational number $\epsilon >0$ and an effective divisor $F$   such that $\Delta-F-\epsilon L$ is ample, so that $\Delta - \epsilon L$ is big. 
   
   With these choices made, we now apply Yamanoi's theorem \cite[Corollary~2]{Yamanoi1}  to the integral  divisors $D_i$ and see that, for every $i=1,\ldots, r$, there is a proper closed subset $E_i:=\Sigma(A,D_i,L)$ and a positive real number $a_i = a_i(A,D_i,L)$ such that, for every smooth projective connected curve $C$ over $k$ and every morphism of orbifold pairs $f:C\to (A,\Delta)$ with $f(C) \not\subset   E_i\cup \Delta$, the inequality
 \[
 \deg f^\ast D_i \leq \# f^{-1}(D_i) + a_i \cdot \mathrm{genus}(C) + \frac{\epsilon}{r} \deg f^\ast L
 \] holds.  
Since $f:C\to (A,\Delta)$ is a morphism of orbifold pairs, we have that 
\[\# f^{-1}(D_i) \leq \frac{1}{m_i} \deg f^\ast D_i.\]
This implies, for every $i=1,\ldots, r$,   that
\[
\deg f^\ast D_i \leq \frac{1}{m_i}\deg f^\ast D_i + a_i \cdot \mathrm{genus}(C) + \frac{\epsilon}{r} \deg f^\ast L.
\]   Thus, for every morphism  of orbifold pairs $f:C\to (A,\Delta)$ with $f(C) \not\subset   \cup_{i=1}^r E_i\cup \Delta$, 
\[
\sum_{i=1}^r \left(\deg f^\ast D_i  -  \frac{1}{m_i}\deg f^\ast D_i\right) \leq  \sum_{i=1}^r \left(a_i \cdot \mathrm{genus}(C) + \frac{\epsilon}{r} \deg f^\ast L\right).
\]
Defining $a:=\sum_{i=1}^r a_i$, we obtain that
\[
  \deg f^\ast(\Delta -\epsilon L) \leq  a\cdot \mathrm{genus}(C).
\] Since $\Delta-\epsilon L$ is big, we conclude that $(A,\Delta)$ is algebraically hyperbolic modulo $E$ (Remark \ref{remark:independence_of_choice_of_ample}), as required.  
 \end{proof}

 \begin{theorem}[Yamanoi + $\epsilon$]\label{thm:yamanoi_erwan2}
  Let $\Delta$ be a big orbifold divisor on an abelian variety $A$ over $k$. Then, the orbifold pair $(A,\Delta)$   is pseudo-geometrically-hyperbolic over $k$.  
 \end{theorem}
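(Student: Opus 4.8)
The plan is to deduce Theorem \ref{thm:yamanoi_erwan2} from Theorem \ref{thm:yamanoi_erwan} by invoking Proposition \ref{prop:from_alg_hyp_to_geom_hyp}, the orbifold analogue of ``bend-and-break forces geometric hyperbolicity''. Concretely, Theorem \ref{thm:yamanoi_erwan} furnishes a proper closed subset $E\subsetneq A$ containing $\Delta$ such that $(A,\Delta)$ is algebraically hyperbolic modulo $E$. To apply Proposition \ref{prop:from_alg_hyp_to_geom_hyp} and conclude that $(A,\Delta)$ is geometrically hyperbolic modulo $E$ — hence pseudo-geometrically hyperbolic over $k$ — it remains only to check the hypothesis that every non-constant morphism $\mathbb{P}^1_k\to A$ factors over $E$. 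But this is automatic: an abelian variety contains no rational curves at all (any morphism $\mathbb{P}^1_k\to A$ is constant, since $\mathbb{P}^1$ is simply connected and rational while $A$ has no rational curves through any point — equivalently $\Omega^1_A$ is trivial, so any map from $\mathbb{P}^1$ pulls back global $1$-forms to zero and must be constant). Thus the hypothesis of Proposition \ref{prop:from_alg_hyp_to_geom_hyp} holds vacuously, even with $E$ replaced by $\emptyset$ in the rational-curve condition.

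First I would reduce to $k=\mathbb{C}$ (although this is not strictly necessary here, since Theorem \ref{thm:yamanoi_erwan} is already stated over an arbitrary $k$ and Proposition \ref{prop:from_alg_hyp_to_geom_hyp} is purely formal). Then I would record the observation that $A$ admits no non-constant map from $\mathbb{P}^1_k$, so the condition ``every non-constant morphism $\mathbb{P}^1_k\to A$ factors over $E$'' is satisfied for any closed subset $E$, in particular for the subset $E\subsetneq A$ produced by Theorem \ref{thm:yamanoi_erwan}. Applying Proposition \ref{prop:from_alg_hyp_to_geom_hyp} to the orbifold pair $(A,\Delta)$ and this $E$ gives that $(A,\Delta)$ is geometrically hyperbolic modulo $E$. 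Since $E$ is a proper closed subset of $A$ containing $\Delta$ (by construction in the proof of Theorem \ref{thm:yamanoi_erwan}, where $E=\cup_{i=1}^r E_i \cup \Delta \cup F$ is a union of the proper closed subsets coming from Yamanoi's \cite[Corollary~2]{Yamanoi1} together with $\Delta$ and the effective divisor $F$), this is precisely the definition of $(A,\Delta)$ being pseudo-geometrically hyperbolic over $k$.

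There is essentially no obstacle here: the theorem is a formal corollary of the already-established pseudo-algebraic hyperbolicity (Theorem \ref{thm:yamanoi_erwan}) combined with the general principle of Proposition \ref{prop:from_alg_hyp_to_geom_hyp}, whose hypothesis is trivially met for abelian varieties since they contain no rational curves. The only point requiring a sentence of care is bookkeeping: making sure that the exceptional locus $E$ extracted from the proof of Theorem \ref{thm:yamanoi_erwan} is a genuine \emph{proper} closed subset containing $\Delta$, so that it is admissible in the definition of pseudo-geometric hyperbolicity for orbifold pairs; this follows because $\Delta-\epsilon L$ being big forces the relevant locus to be a proper subset, exactly as in the last line of that proof.

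\begin{proof}
 By Theorem \ref{thm:yamanoi_erwan}, the orbifold pair $(A,\Delta)$ is pseudo-algebraically hyperbolic over $k$; that is, there is a proper closed subset $E\subsetneq A$ containing $\Delta$ such that $(A,\Delta)$ is algebraically hyperbolic modulo $E$. Since $A$ is an abelian variety, every morphism $\mathbb{P}^1_k\to A$ is constant (indeed, $\Omega^1_A$ is generated by global invariant $1$-forms, which any morphism from $\mathbb{P}^1_k$ must pull back to $0$). In particular, every non-constant morphism $\mathbb{P}^1_k\to A$ factors over $E$ (vacuously, as there are none). Applying Proposition \ref{prop:from_alg_hyp_to_geom_hyp} to the orbifold pair $(A,\Delta)$ and the closed subset $E$, we conclude that $(A,\Delta)$ is geometrically hyperbolic modulo $E$. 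As $E$ is a proper closed subset of $A$ containing $\Delta$, this shows that $(A,\Delta)$ is pseudo-geometrically hyperbolic over $k$, as required.
\end{proof}
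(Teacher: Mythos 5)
Your proposal is correct and follows exactly the paper's own argument: apply Theorem \ref{thm:yamanoi_erwan} to get pseudo-algebraic hyperbolicity, observe that an abelian variety contains no rational curves, and conclude via Proposition \ref{prop:from_alg_hyp_to_geom_hyp}. The extra bookkeeping you do about $E$ being a proper closed subset containing $\Delta$ is a harmless elaboration of the same proof.
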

 \begin{proof}  By Yamanoi's theorem (Theorem \ref{thm:yamanoi_erwan}), we have that $(A,\Delta)$ is pseudo-algebraically hyperbolic over $k$. Therefore, as $A$ has no rational curves, the result follows from Proposition \ref{prop:from_alg_hyp_to_geom_hyp}.
 \end{proof}

 Theorem \ref{thm:conj_gt_holds} below  provides a converse to Theorem \ref{thm:yamanoi_erwan2} and may be considered as a version of Lang's conjecture (Conjecture \ref{conj:lang}) for orbifold pairs on abelian varieties. We include it for the sake of future reference.
 
 \begin{lemma}\label{lem:orbifold_grouplessness}
 Let $(X,\Delta)$ be an orbifold pair over $k$, and let $E\subset X$ be a proper closed subset. If $(X,\Delta)$  is geometrically hyperbolic modulo $E$  over $k$ and  $A$ is an abelian variety over $k$,  then every non-constant orbifold morphism $A\to (X,\Delta)$ factors over $E$.
 \end{lemma}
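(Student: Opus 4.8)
The plan is to reduce the statement to the ordinary (non-orbifold) finiteness definition by exploiting the self-maps of an abelian variety, exactly as in the proof that abelian varieties are geometrically-special (Proposition \ref{prop:ab_var_are_special}). Suppose, for contradiction, that $g:A\to(X,\Delta)$ is a non-constant orbifold morphism whose image is not contained in $E$. First I would pick a general point $a_0\in A(k)$; translating by $-a_0$ we may assume $g(0)\notin E\cup\lceil\Delta\rceil$, so that $x:=g(0)\in X(k)\setminus E$. Next I would choose a suitable smooth projective connected curve $C\subset A$ through $0$ — if $\dim A=1$ take $C=A$, and if $\dim A\ge 2$ take a smooth very ample curve section through $0$, which then has genus $\ge 1$ — together with the pointed maps $\varphi_n:=[n]\circ\iota:(C,0)\to(A,0)$, where $\iota:C\hookrightarrow A$ is the inclusion and $[n]$ is multiplication by $n$ on $A$. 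Composing, set $f_n:=g\circ\varphi_n:(C,0)\to(X,x)$.

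The crucial points are then: (i) each $f_n$ is again an orbifold morphism $C\to(X,\Delta)$, and (ii) the $f_n$ are pairwise distinct (or at least infinitely many of them are). For (i) I would observe that orbifold morphisms compose well with morphisms of varieties on the source — precomposing an orbifold morphism $g:A\to(X,\Delta)$ with \emph{any} non-constant morphism $C\to A$ still yields an orbifold morphism, because the multiplicity inequality $t\cdot m_C(E')\ge m_X(D)$ can only improve when we further pull back divisors along $C\to A$ (the ramification indices multiply), and the condition $f_n(C)\not\subset\lceil\Delta\rceil$ holds since $x=f_n(0)\notin\lceil\Delta\rceil$. For (ii) I would use that $g$ is non-constant, so $g\circ\iota$ is non-constant on some, hence (being a morphism of curves) cofinitely many, translates; more robustly, since $g$ has positive-dimensional image and $C$ is chosen general, the composite $g\circ\iota:C\to X$ is non-constant, and then the maps $[n]\circ\iota$ have unbounded degree onto their image in $A$, forcing $\deg_C f_n^\ast L\to\infty$ for $L$ ample on $X$ (using that $g$ is finite onto its image after possibly shrinking, or simply that a non-constant map of curves has positive degree and $[n]$ multiplies degrees by roughly $n^2$ on $C\cap$ its image); in particular the $f_n$ cannot all coincide. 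This produces an infinite set of orbifold morphisms $f:(C,0)\to(X,\Delta)$ with $f(0)=x$ and $x\notin E$, contradicting the hypothesis that $(X,\Delta)$ is geometrically hyperbolic modulo $E$.

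The main obstacle I anticipate is making step (ii) genuinely clean: one must ensure that $g\circ\iota$ is non-constant for a well-chosen curve $C$ (this is where one invokes that $\dim g(A)\ge 1$ and takes $C$ a general very ample section, so that $C$ meets $g$-fibres properly) and that the degrees $\deg_C f_n^\ast L$ are strictly increasing rather than merely that the $f_n$ are distinct. An alternative, perhaps cleaner, route for step (ii): if only finitely many distinct maps occurred among $\{f_n\}$, then $f_{n}=f_{m}$ for infinitely many pairs, i.e.\ $g\circ[n]|_C=g\circ[m]|_C$ on $C$ with $n\ne m$, forcing $g$ to be constant on the (dense, by simplicity arguments or by choosing $C$ to contain a non-degenerate point as in Proposition \ref{prop:ab_var_are_special}) subgroup generated by $C$, hence $g$ constant — a contradiction. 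Everything else (the translation normalization, the composition-of-orbifold-morphisms bookkeeping, the appeal to the definition of geometric hyperbolicity modulo $E$) is routine.
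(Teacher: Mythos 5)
Your argument is correct in substance, but it is a genuinely different route from the paper's. The paper's proof is soft: it sets $X'=\varphi(A)$, notes that $A$ is geometrically-special (Proposition \ref{prop:ab_var_are_special}), pushes specialness down to $X'$ (Lemma \ref{lem:surj0}), observes that $X'$ is geometrically hyperbolic modulo the proper closed subset $E\cap X'$, and concludes via Corollary \ref{cor:image_is_not_psgeom} that $X'$ is a point. You instead contradict the \emph{definition} of geometric hyperbolicity modulo $E$ directly, by producing infinitely many distinct pointed orbifold morphisms $f_n=g\circ[n]|_C:(C,0)\to(X,x)$ with $x\notin E$. What your approach buys: you only need infinitude of pointed maps, not a covering set, so you avoid the Manin--Mumford/height input behind Proposition \ref{prop:ab_var_are_special}; and your step (i) — that precomposing an orbifold morphism whose source carries the trivial orbifold structure with any curve $C\to A$ again gives an orbifold morphism, since every component of $g^\ast D$ already has multiplicity $\geq m_X(D)$ and pulling back along $C\to A$ only multiplies these — makes explicit a point the paper's terse proof glosses over when it asserts hyperbolicity of $X'$ modulo $E\cap X'$ (an arbitrary pointed map into $X'$ need not be an orbifold map into $(X,\Delta)$; the maps one actually gets from $A$ are, by exactly your composition observation). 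What the paper's approach buys is brevity and reuse of the structural lemmas already proved.

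Two caveats on your step (ii). The clean way to get distinctness is the degree argument made precise: with $M:=g^\ast L$ nef on $A$ and $[n]^\ast M\cong M^{\otimes (n^2+n)/2}\otimes([-1]^\ast M)^{\otimes (n^2-n)/2}$, one gets $\deg_C f_n^\ast L\geq \tfrac{n^2+n}{2}(C\cdot M)\to\infty$ as soon as $C$ is a smooth connected curve through $0$ with $g|_C$ non-constant (a general complete-intersection curve through $0$ works when $\dim A\geq 2$, and no hypothesis such as ``$g$ finite onto its image'' is needed). By contrast, your ``alternative, cleaner route'' is the one with a gap: $g([n]c)=g([m]c)$ for all $c\in C$ does not obviously force $g$ to be constant on the subgroup generated by $C$ (fibres of $g$ need not be translation-invariant), so as written that variant does not go through; stick with the degree-growth argument.
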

 \begin{proof}
 Let $\varphi:A\to (X,\Delta)$ be a non-constant orbifold morphism. Let $0\in A$ be the origin and assume $\varphi(0)\not \in E$. Let $X'= \varphi(A)$, and note that $X'$ is geometrically hyperbolic modulo the proper closed subset $E':=E\cap X'$. Since $A$ is geometrically-special (Proposition \ref{prop:ab_var_are_special}), it follows that $X'$ is geometrically-special (Lemma \ref{lem:surj0}). Since $X'$ is geometrically-special and pseudo-geometrically hyperbolic, we conclude that $X'$ is zero-dimensional (Corollary \ref{cor:image_is_not_psgeom}), so that $\varphi$ is constant, as required.
 \end{proof}

 \begin{theorem}\label{thm:conj_gt_holds}
 Let $\Delta$ be an orbifold divisor on an abelian variety $A$ over $k$. Then, the orbifold pair $(A,\Delta)$ is of general type if and only if $(A,\Delta)$ is pseudo-geometrically hyperbolic over $k$.
 \end{theorem}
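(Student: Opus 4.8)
The statement is an equivalence, and one direction is already essentially in hand. First I would dispose of the direction ``$(A,\Delta)$ pseudo-geometrically hyperbolic $\implies$ of general type.'' Suppose $(A,\Delta)$ is not of general type. The orbifold canonical bundle of $(A,\Delta)$ is $K_A + \Delta = \Delta$ (since $K_A = 0$), so ``not of general type'' means $\Delta$ is not big. The structure theory of subvarieties of abelian varieties à la Ueno--Kawamata, adapted to the orbifold setting, then produces a nontrivial abelian subvariety $B \subset A$ such that $(A,\Delta)$ is, up to translation, the pullback of an orbifold pair on $A/B$; concretely there is a surjective orbifold morphism $(A,\Delta) \to (A/B, \Delta')$ with the fibre over a general point being a translate of $B$, and the point is that these fibres $B + a$ give non-constant orbifold maps $B \to (A,\Delta)$ through every point outside the support of $\Delta$. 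Since $B$ is geometrically-special (Proposition~\ref{prop:ab_var_are_special}), Lemma~\ref{lem:orbifold_grouplessness} forces any proper closed $E \supset \Delta$ witnessing pseudo-geometric hyperbolicity to contain all these translates, hence to equal $A$, a contradiction. So $(A,\Delta)$ must be of general type.

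For the converse, ``$(A,\Delta)$ of general type $\implies$ pseudo-geometrically hyperbolic,'' I would reduce to the case where $\Delta$ is a \emph{big} orbifold divisor and then quote Theorem~\ref{thm:yamanoi_erwan2} directly. The issue is that ``of general type'' for the orbifold pair $(A,\Delta)$ does not literally mean $\Delta$ is big: it means $K_A+\Delta = \Delta$ is big \emph{after passing to a suitable model/modification}, or equivalently that the pair has maximal orbifold Kodaira dimension. However, $A$ is already smooth and minimal, and a $\mathbb{Q}$-divisor $\Delta$ on an abelian variety has Kodaira--Iitaka dimension $\dim A$ if and only if it is big. If $\Delta$ is not big, the Ueno--Kawamata argument above again produces a positive-dimensional abelian subvariety $B$ over which $(A,\Delta)$ is a pullback, so $\kappa(A,\Delta) < \dim A$ and the pair is not of general type. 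Thus ``$(A,\Delta)$ of general type'' is equivalent to ``$\Delta$ big,'' and Theorem~\ref{thm:yamanoi_erwan2} finishes the proof.

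**The main obstacle.** The technical heart is making the orbifold Ueno--Kawamata fibration precise: given that $\Delta$ is not big on $A$, one needs the abelian subvariety $B \subset A$ such that $\Delta$ is ``constant in the $B$-direction,'' i.e. $\Delta = p^*\Delta'$ for $p : A \to A/B$ and $\Delta'$ an orbifold divisor on $A/B$, with $\dim B > 0$. This is where one invokes the classification of divisors (and their stabilizers under translation) on abelian varieties; the relevant statement is that the base locus / non-big locus of $\Delta$ is swept out by translates of a fixed abelian subvariety. Once this $B$ is fixed, verifying that the inclusions $B + a \hookrightarrow A$ are genuine orbifold morphisms into $(A,\Delta)$ for general $a$ is routine (the pullback of $\Delta$ to a fibre is zero, so the multiplicity condition is vacuous), and then Lemma~\ref{lem:orbifold_grouplessness} applies verbatim. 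So the real content is: not big $\iff$ translate-invariant under some positive-dimensional $B$ $\iff$ not of general type, after which everything reduces to the already-proved Theorem~\ref{thm:yamanoi_erwan2}.
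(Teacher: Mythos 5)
Your argument is correct and shares the paper's overall skeleton: in both proofs the forward direction is the observation that $\omega_A=\mathcal{O}_A$ turns ``of general type'' into ``$\Delta$ is big'' followed by a citation of Theorem \ref{thm:yamanoi_erwan2}, and the converse is a sweeping argument in which translates of a positive-dimensional abelian subvariety, being geometrically-special (Proposition \ref{prop:ab_var_are_special}), are forced into the exceptional set by Lemma \ref{lem:orbifold_grouplessness}, so that any witness $E$ of pseudo-geometric hyperbolicity must be all of $A$. Where you genuinely differ is in how non-bigness of $\Delta$ is exploited. The paper isolates a single non-big irreducible component $\Delta'$, writes it via Ueno's theorem as the preimage of a big divisor under a quotient $f:A\to B$, and then applies the lemma to the pair $(A,\Delta')$; the fibres $A_t$ avoid $\Delta'$ but may meet the remaining components of $\Delta$, which is why the paper must work with the sub-pair $(A,\Delta')$ rather than $(A,\Delta)$ itself. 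You instead take the connected stabilizer $B$ of the full support of $\Delta$, which is positive-dimensional precisely because an effective divisor on an abelian variety with finite stabilizer is ample (the Mumford/Bombieri--Gubler fact the paper invokes elsewhere), so that $\Delta=p^{*}\Delta'$ for $p:A\to A/B$ and the fibres of $p$ over points off $\mathrm{Supp}(\Delta')$ miss $\Delta$ entirely; these are then vacuously orbifold morphisms into $(A,\Delta)$ itself and the lemma applies without changing the pair, after which density of the union of such fibres gives $E=A$. Your variant buys a cleaner application of Lemma \ref{lem:orbifold_grouplessness} (no passage to a smaller orbifold divisor, hence no worry about the translates meeting other components with too small multiplicity), at the mild cost of needing the stabilizer statement for the possibly reducible divisor $\mathrm{Supp}(\Delta)$ rather than for one component; both inputs are classical and of the same nature. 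The only point to tighten is your phrasing that the ``base locus / non-big locus is swept out by translates'': the precise statement you need, and in fact use, is that an effective divisor on an abelian variety is big if and only if its stabilizer is finite, which is what yields the descent $\Delta=p^{*}\Delta'$.
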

 \begin{proof}
Since $\omega_A=\mathcal{O}_A$, the orbifold pair $(A,\Delta)$ is of general type  if and only if $\Delta$ is big. Thus, if $(A,\Delta)$ is of general type, then $(A,\Delta)$ is pseudo-geometrically hyperbolic over $k$ by Theorem \ref{thm:yamanoi_erwan2}.

 Conversely, assume that $(A,\Delta)$  is not of general type.  Let $E\subset A$  be a closed subset such that $(A,\Delta)$ is geometrically hyperbolic modulo $E$. To conclude the proof, it suffices to show that $E=A$.

 Note that $\Delta$ is not big. Let $\Delta'$ be a non-big   component of $\Delta$. (As the sum of big divisors is big, such a component exists.) Then, we have a non-zero abelian variety $B$, a homomorphism $f:A\to B$, and a big orbifold divisor $D$ on $B$ such that $\Delta'\subset f^{-1}(\mathrm{Supp}(D))$.  Thus, for every $t$ in $B\setminus D$, the fibre $A_t$ of $f:A\to B$ is a positive-dimensional abelian subvariety of $A$. Note that, as $A_t$ and $\Delta'$ are  disjoint, we have that  the  natural inclusion $A_t\to A\setminus \Delta'$ induces an orbifold inclusion $A_t\subset (A,\Delta')$.  Since $(A,\Delta')$ is geometrically hyperbolic modulo $E$, by Lemma \ref{lem:orbifold_grouplessness}, the inclusion $A_t\subset A$ factors over $E$. This implies that the induced morphism $E\to B$ is dominant with general fibres of dimension $\dim A_t$, so that $\dim E = \dim A_t + \dim B =\dim A$. This implies that $E=A$, as required.
 \end{proof}

 \subsection{The Albanese map has no multiple fibres}
In this section we complete the proof of our   main result on the Albanese map of a geometrically-special normal projective variety (Theorem \ref{thm:main_result_I}).   
To prove Theorem \ref{thm:main_result_I}, we will make use of  the "orbifold base" (as defined below) of the Albanese fibration.  

\begin{definition}\label{def:orbifold_base} Let $f:X\to Y$ be a surjective morphism of normal projective varieties over $k$ with   connected fibres. We define the orbifold divisor $\Delta(f)$ on $Y$ as follows.  Let $D\subset Y$ be a prime divisor of $Y$. Let $F_1,\ldots, F_r$ be the irreducible divisors of $X$ which map surjectively to $D$ via $f$. Then, we  may write the scheme-theoretic fibre of $f$ over $D$ as $f^\ast D = R+\sum_{k} t_k\cdot F_k $ with $R$ an $f$-exceptional divisor of $X$ with $f(R)\subsetneq D$.  For each irreducible Weil divisor  $D\subset Y$, we define   $$m_f(D) := \mathrm{inf}\{t_k\}.$$  We   define $\Delta := \sum_{D} \left(1-\frac{1}{m_f(D)}\right)  D$, where the sum runs over all prime divisors of $Y$.
\end{definition}

\begin{proof}[Proof of Theorem \ref{thm:main_result_I}]
We first reduce to $k=\mathbb{C}$ (so that we can appeal to Yamanoi's work \cite{Yamanoi1}). To do so, choose  a countable algebraically closed subfield $k_0\subset k$   such that there is a normal projective variety $X_0$ over $k_0$ with $X_{0,k}\cong X$ such that $X_0$ is geometrically special over $k_0$ (Lemma \ref{lem:descent}). Since $k_0$ is countable, we may choose an embedding $k_0\to \mathbb{C}$. Since $X_0$ is geometrically-special over $k_0$, we have that $X_{0,\mathbb{C}}$ is geometrically-special over $\mathbb{C}$. By standard properties of the Albanese map, it suffices to prove the theorem for $X_{0,\mathbb{C}}$. Thus, to prove the theorem, we may and do assume that $k=\mathbb{C}$.

 Let $X$ be a normal proper  geometrically-special variety over $k$, and let $\mathrm{alb}:X\to \mathrm{Alb}(X)$ be the Albanese map. By Corollary \ref{cor:geom_special_albanese}, the morphism $\mathrm{alb}$ is surjective with connected fibres. Therefore, we may consider   the associated   orbifold divisor $\Delta(\mathrm{alb})\subset \mathrm{Alb}(X)$ of $\mathrm{alb}$ (Definition \ref{def:orbifold_base}). To prove the theorem,  it suffices to show that $\mathrm{alb}$ has no multiple fibers in codimension one (Definition \ref{defn:mult}).   
 
 We argue by contradiction and suppose that $\mathrm{alb}$ has a multiple fiber in codimension one. In particular, the orbifold divisor $\Delta(\mathrm{alb})$ is non-empty.
 
 Let $\Delta'$ be an irreducible component of the support of $\Delta(\mathrm{alb})$. Then,  by Ueno's fibration theorem  \cite[Chapter~IV, Theorem~10.9]{Ueno},  there is a non-zero abelian variety $A$, a homomorphism $F:\mathrm{Alb}(X)\to A$, and a  big   divisor $D$ on $A$ such that $ \Delta' = F^{-1}(\mathrm{Supp} \ D)$. (Explicitly: $D$ is defined to be the general type base of the Ueno fibration for $\Delta'$. The fact that $D$ is a \emph{big} divisor follows from the fact that $D$ is of general type, so that it is  in particular a divisor with finite stabilizer. Such divisors are ample \cite[Lemma~8.5.6]{BombieriGubler}).
 
  We now consider the composed morphism $f:X\to \mathrm{Alb}(X)\to A$ and the orbifold divisor $\Delta = (1-\frac{1}{m})D$, where $m$ is the multiplicity of $D$ in the orbifold divisor $\Delta(f)$ of $f$.  If $m=1$, then the fibre over $D$ has a component of multiplicity one. In particular, if $m=1$, the fibre of $\mathrm{alb}$ over $\Delta'$ also has a component of multiplicity one. (As the referee pointed out to us, this component might be    $\mathrm{alb}$-exceptional a priori, so that $\Delta(\mathrm{alb})$ might still be non-trivial; see Remark \ref{remark:exc}.)  Thus, to prove the theorem, it suffices to show that $m=1$. We argue by contradiction and assume that $m>1$.
   
   Note that $f$ is surjective with connected fibres (and that $X\to (A,\Delta)$ might not be orbifold). Write $f^*D = R+ \sum_{k=1}^r t_k F_k$, where $F_1,\ldots, F_r$ are the irreducible divisors in $f^{-1}(D)$ mapping surjectively onto $D$ and $R$ is an $f$-exceptional divisor of $X$ with $f(R)\subsetneq D$. Let $Z$ be the image of $R$ in $A$ and note that $Z$ is of codimension at least two (and possibly empty) in $A$.
 
 Since $X$ is geometrically-special over $k$, we may (and do)  choose a covering set $g_i:(C,c)\to (X,x)$ with  $x\not\in f^{-1} (D)$ and $C$ a smooth projective curve over $k$. Let $\pi_C:C\to \mathbb{P}^1_k$ be a finite   morphism of degree at most $\mathrm{genus}(C)+1$ and  define $C^0  = \pi_C^{-1}(\mathbb{A}^1_k)$. Consider the covering set $f_i:(C,c)\to (A,a)$, where $f_i := f\circ g_i$ and $a:= f(x)$. Note that $f_i(C) \not\subset D$ and that    $f_i$ is    "orbifold modulo $Z$"  in the sense that the desired inequality on multiplicities     holds outside of $Z$. More precisely, we have
 \begin{eqnarray}\label{inequality}
 \# f_i^{-1}(D) &=& \# f_i^{-1}(D \setminus Z) + \# f_i^{-1}(Z) \leq \frac{1}{m} \deg f_i^\ast D + \# f_i^{-1}(Z).
 \end{eqnarray}
  Let $L$ be an ample line bundle on $A$ and let $\epsilon >0$ be  a rational  number such that $\Delta -2\epsilon L$ is big. 
Applying \cite[Proposition~5]{Yamanoi1} (with "$S$" in \emph{loc. cit.} a point), we obtain that  there is a real number $\kappa = \kappa(C, A,Z,L,\epsilon)>0$ depending only on  $C, A, Z,	L,$  $ \epsilon$ and a finite set $P = P(A,Z, L,\epsilon)$ of positive-dimensional abelian subvarieties of $A$ depending only on $A$, $Z$, $L$,  $\epsilon$ such that either 
  \begin{eqnarray}
  \# f_i^{-1}(Z) &\leq & \kappa   + \epsilon \cdot \deg f_i^\ast L
  \end{eqnarray}   or $f_i(C)$ is contained in the subset   $\Phi:=  \cup_{B\in P} \Phi(B,Z)\subset A$, where 
  \begin{eqnarray}
  \Phi(B,Z) := \{x\in A \ | \ \dim((x+B) \cap Z)\geq \dim B-1\}.
  \end{eqnarray} Note that $\Phi(A,Z) =\emptyset$ and that $\Phi$ is a proper closed subset of $A$. 
  
    Since $(f_i)_i$ is a covering set,   replacing   $(f_i)_i$    by a subsequence if necessary, we may and do assume that, for each $i$, we have that $f_i(C)\not\subset \Phi$. In particular, for each $i$, we have 
  \begin{eqnarray}\label{eqn2}
  \# f_i^{-1}(Z) &\leq & \kappa   + \epsilon\cdot  \deg f_i^\ast L. 
  \end{eqnarray}

  Now, as in the proof of Theorem \ref{thm:yamanoi_erwan}, it follows from \cite[Corollary~2]{Yamanoi1}  that there is a real number $\kappa_2$ depending only on $C, A, D, L, \epsilon$  such that
  \begin{eqnarray}\label{eqn3}
 \deg f_i^\ast D &\leq& \# f_i^{-1}(D) + \kappa_2  + \epsilon \cdot \deg f_i^\ast L
  \end{eqnarray}

Substituting inequalities (\ref{inequality}) and (\ref{eqn2}) in (\ref{eqn3}), we see that    \begin{eqnarray}
  \deg f_i^\ast D &\leq  & \# f_i^{-1}(D) + \kappa_2  + \epsilon \cdot \deg f_i^\ast L \\ &\leq &\frac{1}{m} \deg f_i^\ast D +  \kappa   + \epsilon \cdot \deg f_i^\ast L +\kappa_2  + \epsilon \cdot \deg f_i^\ast L.
  \end{eqnarray} 
  
  Since $\Delta = \left(1-\frac{1}{m}\right) D$, we may rewrite this as 
\begin{eqnarray}
  \deg f_i^\ast (\Delta - 2\epsilon L)  \leq   \kappa   +\kappa_2.
  \end{eqnarray}   
  
  Note that, as $m>1$, the divisor $\Delta - 2\epsilon L$ is big.
  Therefore, as in  Remark  \ref{remark:independence_of_choice_of_ample}, since $\Delta-2\epsilon L$ is big, we may write $\Delta -2\epsilon L = M  + F$, where $M$ is ample and $F$ is effective.  Replacing the covering set $(f_i)_i$ by a subsequence if necessary, we may and do  assume that $f_i(C)\not \subset F$ for every $i$. Then, for every $i$, we have that 
  \begin{eqnarray}
  \deg f_i^\ast M \leq  \deg f_i^\ast M + \deg f_i^\ast F =\deg f_i^\ast (\Delta - 2\epsilon L) \leq   \kappa + \kappa_2.
  \end{eqnarray}  
  Since $\kappa + \kappa_2$ is a real number depending only on $C$, $A$, $D$, $L$,  $\epsilon$, we conclude that  the moduli space of pointed maps $(C,c)\to (A,a)$ is positive-dimensional (as it has a component containing an infinite subsequence of $(f_i)_i$). Therefore, by Mori's bend-and-break (see \cite[Proposition~3.1]{DebarreBook}), there is a rational curve in $A$ (passing throug $a$), contradicting the fact that $A$ has no rational curves. This  contradiction  concludes the proof.
 \end{proof}
 
\begin{remark}\label{remark:referee2}
In an earlier version of our paper, we used that the map $\mathrm{alb}: X\to (\mathrm{Alb}(X),\Delta(\mathrm{alb}))$ is orbifold in the proof of Theorem \ref{thm:main_result_I}. This is however \emph{a priori} false, as a referee pointed out to us.  The issue comes from  $\mathrm{alb}$-exceptional components over $\Delta$.   We deal with this "codimension two" issue by using \cite[Proposition~5]{Yamanoi1} in our proof.
\end{remark} 
 
 \begin{remark}[Analytic analogue]\label{remark:analytic}
 As already mentioned in the introduction, the analytic analogue of Theorem \ref{thm:main_result_I} was known before. Indeed, as stated in  \cite[Proposition~5.1]{CampanaClaudonStability}, for $X$ a Brody-special normal projective variety, the Albanese map is surjective, has connected fibres, and has  no multiple fibres in codimension one; this can be deduced directly from     \cite[Main~Theorem]{NWY1}, \cite[Corollary~1.4]{LuWinkelmann}, and \cite[Theorem~13]{KawamataChar}.   The codimension two phenomenon discussed in Remark \ref{remark:referee2} also occurs in the proof of the analytic analogue and is  dealt with accordingly in the proof of \cite[Theorem~4.15]{LuWinkelmann}. Note that in our proof of Theorem \ref{thm:main_result_I} we   can not appeal to \cite{LuWinkelmann}, as they only deal with holomorphic maps from $\mathbb{C}$ to $X$. This is why we resort to Yamanoi's results in the above proof. \end{remark}

  \begin{remark}\label{remark:exc} Given a surjective morphism with connected fibres $X\to Y$, 
  there is a subtle difference between "having no multiple fibres in codimension one" and having a trivial orbifold divisor. We stress that we do not show that the Albanese fibration $X\to \mathrm{Alb}(X) $ of a geometrically-special has trivial orbifold divisor (or is "base-special").  Indeed, our proof above only shows that the orbifold base of $f:X\to \mathrm{Alb}(X)\to A$ is trivial. Since  a multiplicity one component of the fibre of $f$ over a codimension one point might be $\mathrm{alb}$-exceptional,  one can not conclude that the Albanese fibration has trivial orbifold divisor. This subtle issue seems to have been overlooked in    \cite[Proposition~5.3]{Ca04},    \cite[Proposition~5.1]{CampanaClaudonStability} and    \cite{LuWinkelmann}.
  \end{remark}
 
\section{Virtually abelian fundamental groups}\label{section:pi1}
In this section we show that the image of the fundamental group of a geometrically-special smooth projective variety over $\CC$ along a linear representation is virtually abelian (Theorem \ref{thm:pi1_geomhyp}). 
Our proof  follows closely Yamanoi's strategy \cite{Yam10}, and we will indicate as carefully as possible how to adapt Yamanoi's line of reasoning to prove Theorem \ref{thm:pi1_geomhyp}. 

The key observation is that the formal properties of Brody-special varieties used by Yamanoi also hold for geometrically-special varieties (e.g., the Albanese map of a Brody-special proper variety is surjective, much like the Albanese map of a geometrically-special proper variety). The novel technical result is arguably Theorem \ref{prop:3.1a}. 

\subsection{The case of $p$-unbounded representations}  \label{section:31}

It is reasonable to state that   \cite[Proposition~3.1]{Yam10} is one of the crucial results of Yamanoi's  paper on fundamental groups of Brody-special  varieties. We start with establishing the geometrically-special analogue of his result; see Proposition \ref{prop:3.1} for a precise statement.

Recall that, if $L$ is a field, a group scheme $G$ over $L$ is an \emph{ almost simple algebraic group over  $L$} if $G$ is a non-commutative  smooth affine   (geometrically) connected group scheme  over $L$ and $G$ has no infinite proper closed  normal subgroups.

 If $p$  is a prime number   and $G$ is  an almost simple algebraic group over a finite field extension $K$ of $\mathbb{Q}_p$, then  we say that a representation $\rho: \pi_1(X)\to G(K)$ is \emph{$p$-bounded} if its image $\rho(\pi_1(X))$
 is contained in a maximal compact subgroup of $G(K)$. If $\rho$ is not $p$-bounded, then we say that $\rho$ is \emph{$p$-unbounded}. 
 
 In what follows we will prove that the fundamental group of a smooth projective geometrically-special variety does not admit a Zariski-dense representation into an almost simple algebraic group. To do so, we will argue case-by-case.  The crucial (arguably most technical) case is that of a \emph{big} $p$-unbounded   representation considered first.  (For the reader's convenience, we recall that   a representation $\rho:\pi_1(X)\to \mathrm{GL}_n(\CC)$ is   \emph{big} if there is a countable  collection of proper closed subsets $X_i\subsetneq X$ with $i=1,\ldots$ such that, for every positive-dimensional subvariety $Z\subset X$ containing a point of $X\setminus \cup_{i=1}^\infty X_i$, the group $\rho[\mathrm{Im}(\pi_1(\tilde{Z})\to \pi_1(X))]$ is infinite, where   $\tilde{Z}\to Z$ is a desingularization of $Z$.)

\begin{theorem}\label{prop:3.1a}  
Let $X$ be a smooth projective variety over $\CC$, and let $K$ be a finite extension of $\QQ_p$. Let $G$ be an almost simple  algebraic group   over $K$. Assume that there exists a \textbf{big $p$-unbounded} representation $\rho:\pi_1(X)\to G(K)$ whose image is Zariski-dense in $G$. Then $X$ is not geometrically-special over $\CC$.  
\end{theorem}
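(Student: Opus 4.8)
The plan is to argue by contradiction, following Yamanoi's treatment of the Brody-special case \cite{Yam10} but replacing his Nevanlinna-theoretic second main theorem on abelian varieties by the algebraic input already exploited in the proof of Theorem~\ref{thm:main_result_I}. So suppose $X$ is geometrically-special over $\CC$ and that a big $p$-unbounded representation $\rho\colon\pi_1(X)\to G(K)$ with Zariski-dense image exists; I will manufacture a rational curve on an abelian variety, which is absurd.

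\emph{Step 1 (spectral covers).} Since $G$ is almost simple over the non-archimedean local field $K$ and $\rho$ is $p$-unbounded, the induced action of $\pi_1(X)$ on the Bruhat--Tits building of $G(K)$ has no fixed point. By the Gromov--Schoen existence theorem there is a $\rho$-equivariant pluriharmonic map from the universal cover of $X$ to this building, and Zuo's spectral cover construction (see \cite{Zuo}) attaches to it a finite system of closed holomorphic one-forms on a suitable finite \'etale cover of $X$. Passing to the associated Stein-factorised fibration, and using that $\rho$ is \emph{big}, one obtains --- after replacing $X$ by a finite \'etale cover $X'\to X$ (harmless by Lemma~\ref{lem:cw_geom_special}) and then by a smooth birational modification (harmless by Lemma~\ref{lem:surj}) --- an abelian variety $A$ of \emph{positive} dimension, a surjective morphism with connected fibres $g\colon X'\to A$ (a quotient of the Albanese map of $X'$, which by Theorem~\ref{thm:main_result_I} is itself surjective with connected fibres), and a \emph{big} orbifold divisor $\Delta=\sum_D(1-\tfrac{1}{m_D})D$ on $A$ such that $g\colon X'\to(A,\Delta)$ is an orbifold morphism away from a closed subset $W\subset A$ of codimension $\geq 2$ --- exactly the geometry appearing in the proof of Theorem~\ref{thm:main_result_I}. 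The bigness of $\rho$ is what prevents this construction from degenerating, i.e.\ it forces $\dim A\geq 1$ and $\Delta$ big, and it guarantees that it is $X'$ itself, rather than a proper quotient, that carries the fibration $g$.

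\emph{Step 2 (covering set and the endgame).} By Lemma~\ref{lem:cw_geom_special} and birational invariance, $X'$ is again geometrically-special. Taking the dense open $U\subset X'$ to be the complement of $g^{-1}(\mathrm{Supp}\,\Delta\cup W)$ together with the exceptional loci, fix a point $x\in U$ and a covering set $f_i\colon(C,c)\to(X',x)$, $i=1,2,\dots$, with $C$ a smooth projective curve, and set $h_i:=g\circ f_i\colon(C,c)\to(A,a)$ with $a:=g(x)$. Then $h_i(C)\not\subset\mathrm{Supp}\,\Delta$ and, for each component $D$ of $\Delta$,
\[
\#\,h_i^{-1}(D)\ =\ \#\,h_i^{-1}(D\setminus W)+\#\,h_i^{-1}(W)\ \leq\ \tfrac{1}{m_D}\,\deg_C h_i^\ast D+\#\,h_i^{-1}(W),
\]
so the $h_i$ are ``orbifold modulo $W$''. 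Now I would run the endgame of the proof of Theorem~\ref{thm:main_result_I} verbatim: fixing an ample $L$ on $A$ and a rational $\epsilon>0$ with $\Delta-2\epsilon L$ big, applying Yamanoi's \cite[Corollary~2]{Yamanoi1} and \cite[Proposition~5]{Yamanoi1} (the latter to absorb the correction terms $\#\,h_i^{-1}(W)$ at the cost of discarding those $h_i$ whose image lies in a fixed finite union $\Phi\subsetneq A$ of translated abelian subvarieties), and then writing $\Delta-2\epsilon L=M+F$ with $M$ ample and $F$ effective and discarding the $h_i$ with image in $F$, one gets a real constant $\kappa$ depending only on $C,A,\Delta,L,\epsilon$ with $\deg_C h_i^\ast M\leq\kappa$ for every surviving index $i$. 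Since the $f_i$ form a covering set of $C\times X'$ and $g$ has positive-dimensional fibres, infinitely many surviving $h_i$ are distinct and non-constant (otherwise $\bigcup_i\Gamma_{f_i}$ would be contained in the preimage under $\mathrm{id}_C\times g$ of finitely many graphs together with the fibre $C\times g^{-1}(a)$, which is not dense in $C\times X'$). Hence the scheme of pointed morphisms $(C,c)\to(A,a)$ of $M$-degree at most $\kappa$ has a positive-dimensional component, and Mori's bend-and-break \cite[Proposition~3.1]{DebarreBook} produces a rational curve in $A$ through $a$, contradicting the fact that abelian varieties contain no rational curves. This contradiction shows that $X$ is not geometrically-special.

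The hard part will be Step 1: extracting from Zuo's spectral-cover theory precisely the stated output --- a morphism, on a finite \'etale cover and up to birational modification, onto a positive-dimensional abelian variety equipped with a big orbifold divisor, with the orbifold-morphism property holding in codimension one --- and verifying that ``big'' is exactly the hypothesis that excludes the degenerate cases $\dim A=0$ or $\Delta$ not big. Once Step~1 is in place, Step~2 is a line-by-line repetition of the covering-set/bend-and-break mechanism already carried out for Theorem~\ref{thm:main_result_I}.
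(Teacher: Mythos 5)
There is a genuine gap, and it is exactly where you locate it: Step 1. Zuo's spectral-cover theory does not produce the structure you assert. What it attaches to a big $p$-unbounded Zariski-dense representation is a \emph{ramified} finite covering $\pi:X^s\to X$ (unramified only outside the zero loci of the differences $\omega_i-\omega_j$ of the spectral one-forms, which live in $\mathrm{H}^0(X^s,\pi^\ast\Omega^1_X)$, not on $X$ or on any \'etale cover of $X$), together with the facts that $X$ is of general type and Chern-hyperbolic and that $X^s$ has \emph{maximal Albanese dimension}, i.e.\ its (adapted) Albanese map is generically finite onto its image. It does not give, on a finite \'etale cover of $X$ up to birational modification, a fibration with connected fibres onto a positive-dimensional abelian variety carrying a \emph{big} orbifold divisor with the orbifold-morphism property in codimension one; nothing in the theory (nor in the bigness of $\rho$) identifies the hyperbolicity forced by $\rho$ with an orbifold structure on an abelian quotient of the Albanese, and no such reduction is known. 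Since the whole of your Step 2 is conditional on this unproved structural claim, the argument does not go through as written.

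The paper's proof is shaped precisely by the obstruction you are skirting: because $\pi:X^s\to X$ is ramified, geometric-specialness does \emph{not} transfer to $X^s$ (Lemma~\ref{lem:cw_geom_special} needs \'etale), so one cannot work on $X^s$ with a covering set directly. Instead one keeps the covering set $f_k:(C,c_0)\to(X,x)$ on $X$, pulls each $f_k$ back along $\pi$ to a normalized curve $g_k:W_k\to X^s$, and uses that $X^s$ is of general type of maximal Albanese dimension, hence pseudo-algebraically hyperbolic by Theorem~\ref{thm:yamanoi}, to get $\deg g_k^\ast\pi^\ast M\leq\gamma\cdot\mathrm{genus}(W_k)$. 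The technical heart — absent from your proposal — is that $\mathrm{genus}(W_k)$ is a priori unbounded in $k$ (the branch points of $W_k\to C$ are uncontrolled), and it must be bounded linearly in $\deg g_k^\ast\pi^\ast M$ with a small constant: this is done via Riemann--Hurwitz, counting $\# g_k^{-1}\bigl((\omega_i-\omega_j)_0\bigr)$, splitting into the cases $g_k^\ast(\omega_i-\omega_j)\neq 0$ (handled by comparing with a section of $K_C$) and $g_k^\ast(\omega_i-\omega_j)=0$ (handled by Yamanoi's auxiliary divisors $D_n\in|L^{\otimes l_n}|$ with $l_n=o(n)$ and high vanishing at the relevant point). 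Only then does bend-and-break apply, producing a rational curve through $x\notin\Delta$ and contradicting Chern-hyperbolicity of $X$ — note the contradiction is a rational curve in $X$, not in an abelian variety. So your endgame mechanism is the right spirit, but the reduction you hope for in Step 1 is not available, and replacing it requires the genus-control argument on the ramified spectral cover.
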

\begin{proof}
As in Yamanoi's proof of \cite[Proposition~3.1]{Yam10} we will use Zuo's results on spectral coverings \cite{ZuoChern, Zuo99} (see also \cite{CCEZuo}). 
For instance, by Zuo's work (\cite[Theorem~1]{ZuoChern} and \cite[Theorem~2]{ZuoChern}),     as $\rho$ is big, the variety $X$ is ``Chern-hyperbolic''.  In particular, there is a proper closed subset $\Delta\subsetneq X$ such that every non-constant morphism $f:\mathbb{P}^1_{\CC}\to X$ factors over $\Delta$, i.e., we have $f(\mathbb{P}^1_{\CC}) \subset \Delta$. Moreover, the variety $X$ is of general type\footnote{At this point, it is clear that $X$ is not special. However, proving that $X$ is not geometrically-special requires many more arguments.}.

We will  make explicit use  of Zuo's spectral covering  $\pi:X^s\to X$; see \cite[p~554]{Yam10} for an explanation of the notation.
As is explained in \emph{loc. cit.}, the variety  $X^s$ is equipped with holomorphic one-forms $\omega_1, \dots, \omega_{\ell}$ that are contained in $\mathrm{H}^0(X^s, \pi^\ast\Omega^1_X)$ and such that $\pi: X^s \to X$ is unramified outside $\bigcup_{\omega_i\neq \omega_j}(\omega_i-\omega_j)_0.$ Moreover, the forms $\omega_1,\ldots,\omega_{\ell}$ are used to construct the adapted Albanese map  $\alpha:X^s\to A$. We  note that thanks to \cite{ZuoChern}, the morphism $\alpha$ is generically finite onto its image, so that $X^s$ has maximal Albanese dimension (because the adapted Albanese map factorizes through the Albanese map). Moreover,   since $X$ is of general type, it follows that $X^s$ is of general type.  Therefore, it follows from Yamanoi's theorem (Theorem \ref{thm:yamanoi}) that $X^s$ is pseudo-algebraically hyperbolic. In particular, replacing $\Delta\subset X$ by a suitable larger proper closed subset, we may  assume that $X^s$ is algebraically hyperbolic modulo $\pi^\ast \Delta$. Now, we fix  an ample line bundle $M$ on $X$, so that (by the algebraic hyperbolicity of $X^s$ modulo $\pi^\ast \Delta$)  there is a real number $\gamma$ depending only on $X$, $M$, and $\pi^\ast \Delta$ such that, for every smooth projective connected curve $W$ over $\CC$ and every morphism $g:W\to X^s$ with $g(W)\not\subset \pi^\ast \Delta$, the inequality 
 \begin{eqnarray}\label{yamanoi_ineq}
 \deg g^\ast (\pi^\ast M) &\leq& \gamma \cdot \mathrm{genus}(W)
 \end{eqnarray} holds.  
 
To prove the proposition,  we argue by contradiction.  Thus, assume that $X$ is geometrically-special over $\CC$.    Define $U:=X\setminus \Delta$, choose a smooth projective connected curve $C$ of genus at least two, a point $c_0$ in $C$, a point $x$ in $ U$, and a covering set of non-constant morphisms  $f_k:(C,c_0)\to (X,x)$ with $k=1,\ldots$.   By the definition of a covering set, replacing the sequence of morphisms $(f_k)_{k=1}^\infty$  by a suitable subsequence if necessary, we have that, for every proper closed subset $Z\subset X$, there are only finitely many integers $k$ such that $f_k(C)\subset Z$.  
 
For every $k=1,\ldots$, we consider the pull-back $V_k\to X^s$ of $f_k:C\to X$ along $\pi:X^s\to X$.   Let $W_k$ be the normalization of $V_k$, and let $g_k:W_k\to X^s$ denote the composed morphism $W_k\to V_k\to X^s$. (Note that $W_k$ is a, possibly disconnected,    smooth projective   curve over $\CC$.)   This gives us, for every $k=1,\ldots,$ the following commutative diagram
\begin{equation}
  \label{eq:lifting}
  \vcenter{\xymatrix{
      W_k
      \ar[d]^{\rho_k}
      \ar[r]^{g_k}
      &
      X^s
      \ar[d]^{\pi} \ar[r]^{\alpha} & A
      \\
      C
      \ar[r]^{f_k}
      &
      X
  }}
  \end{equation}
 Now, since $f_k(c_0) = x\not\in \Delta$, for every connected component $W_k'$ of $W_k$, we have that  $g_k(W_k')\not\subset 
 \pi^\ast \Delta$. In particular, using that $X^s$ is algebraically hyperbolic modulo $\pi^\ast \Delta$ and letting $\mathrm{genus}(W_k)$ denote $\sum \mathrm{genus}(W_k')$ where the sum runs over all connected components $W_k'$ of $W_k$,  we see that the inequality  
\begin{eqnarray}\label{ineq3}
\deg \pi \cdot \deg f_k^\ast M &=& \deg (f_k \circ \rho_k)^\ast M = \deg g_k^\ast(\pi^\ast M) \leq \gamma \cdot \mathrm{genus}(W_k)
\end{eqnarray} holds; see (\ref{yamanoi_ineq}) above. Note that we used that $\deg \rho_k=\deg \pi$.

Let $D\subset X^s$ be the (support of the) ramification divisor of $\pi: X^s \to X$. Then $D \subset \bigcup_{1\leq i<j\leq \ell} \Xi_{ij},$ where   $\Xi_{ij}:=(\omega_i-\omega_j)_0.$  Let $\omega_{ij}:=\omega_i-\omega_j.$ The idea is to use the forms $\omega_i$ to ``control'' the genus of $W_k$ in terms of the degree of $g_k$. (Since there is no a priori control on the branch points of  $W_k\to C$, the genus of $W_k$ may tend to infinity as $k$ tends to infinity.)
 
 To do so, 
  by the Riemann-Hurwitz formula $K_{W_k}=\rho_k^*(K_C)+R_k$, it suffices to ``control'' the degree of the ramification divisor $R_k$ of $\rho_k$. Note that  
\begin{eqnarray}\label{ineq2} 
\deg R_k &\leq& (\deg \rho_k) (\# g_k^{-1}(D))\leq (\deg \rho_k)\sum_{1\leq i<j\leq \ell} \# g_k^{-1}(\Xi_{ij}).
\end{eqnarray} In particular, 
\begin{eqnarray}\label{ineq20} 
\mathrm{genus}(W_k) &\leq& ( \deg \rho_k)\cdot \mathrm{genus}(C)+ (\deg \rho_k)\sum_{1\leq i<j\leq \ell} \# g_k^{-1}(\Xi_{ij}).
\end{eqnarray}
Let $1\leq i < j \leq \ell$ be such that   $g_k^\ast\omega_{ij}\neq 0.$ 
Consider $\beta$ a holomorphic section of $K_C$ 
and the meromorphic function $\eta=g_k^{\ast}\omega_{ij}/\rho_k^{\ast}\beta.$ We can consider $g_k^\ast\omega_{ij}$ as a holomorphic section of $\rho_k^*\Omega_C$, so that  $\deg(\eta=\infty)= (\deg \rho_k) \deg K_C$.
Therefore,   it follows that
\begin{eqnarray}\label{ineq_points1}
\# g_k^{-1}(\Xi_{ij}) &\leq& \deg(\eta=0)=\deg(\eta=\infty) \\ &\leq& (\deg \pi) \deg K_C \leq 2 (\deg \pi) \mathrm{genus}(C). \nonumber
\end{eqnarray}
Now, let    $1\leq i < j \leq \ell$ be such that   $g_k^\ast\omega_{ij}= 0.$ Following Yamanoi (see \cite{Yam10} p.557 for details), we consider the Albanese map $b: X^s \to B$ with respect to $\omega_{ij}$. Also, we let  $S \to b(X^s)$ be the normalization of $b(X^s)$, so  that $b$ factors as 
$$
\xymatrix{ 
X^s \ar[r]^c & S \ar[r]^\psi & B.
}
$$  
Then, the image $c(\Xi_{ij})$ of $\Xi_{ij}$ is a point $P$ in $S$. Let $L$ be an ample line bundle on $S$. Then,  Yamanoi  constructs (see \cite{Yam10} p.558 for details), for every positive integer $n$, a positive integer $l_n$ and a divisor $D_n$ on $S$ such that the following three properties hold.
\begin{enumerate}
\item  The sequence of integers $(l_n)_{n=1}^\infty$ is strictly increasing and satisfies $\lim_{n\to \infty} \frac{l_n}{n} =0$;
\item  The divisor $D_n$ is contained in $|L^{\otimes l_n}|$;  
\item  If $c\circ {g_k}(y)=P$, then $\ord_y(c\circ {g_k})^*D_n \geq n.$
\end{enumerate}

 Now, since $\pi^\ast M$ is ample, we may choose an integer $N_0$ such that $N_0 \pi^\ast M-c^\ast L \geq 0$.
Next, we choose an integer $n$ such that  $\frac{l_n}{n} < \frac{1}{(\ell+1)^3} $; such an integer exists as $l_n=o(n)$ (see (1) above). Now, by our choice of covering set $(f_k)_{k=1}^\infty$, there is an integer $k_{i,j}$ such that, for all $k\geq k_{i,j}$, we have that $f_k(C)\not\subset \pi( c^{-1}(D_n))$. In particular, for all $k\geq k_{i,j}$, we have that $c\circ g_k(W_k)\not\subset D_n$. Therefore, for $k\geq k_{i,j}$, we have that  
\begin{eqnarray}\label{ineq5}
\# g_k^{-1}(\Xi_{ij}) &\leq& \# (c\circ g_k)^{-1}(P)\leq \frac{1}{n}\deg (c\circ g_k)^\ast D_n \leq \frac{l_n}{n}\deg (c\circ g_k)^\ast L \\
& \leq& \frac{1}{(\ell+1)^3} \deg (c\circ g_k)^\ast L  \leq   \frac{N_0}{(\ell+1)^3} \cdot \deg g_k^\ast \pi^\ast M  \nonumber
\end{eqnarray}

Define $$k_0 = \max(k_{i,j} \  | \ 1\leq i <j \leq \ell \textrm{ such that } g_k^\ast \omega_{ij} \neq 0\}.$$
By combining  (\ref{ineq5}) with (\ref{ineq_points1}) and (\ref{ineq20}), we obtain that, for all $k\geq k_0$, 
\begin{eqnarray*}\label{ineq_final}
\mathrm{genus}(W_k) &\leq& ( \deg \rho_k)\cdot \mathrm{genus}(C)+ (\deg \rho_k)\sum_{1\leq i<j\leq \ell} \# g_k^{-1}(\Xi_{ij}) \\
&\leq&  (\deg \rho_k)\cdot \Big{(} \mathrm{genus}(C)+   \sum_{\substack{1\leq i< j\leq \ell \nonumber \\ g_k^\ast\omega_{ij}\neq  0}}\left(   \# g_k^{-1}(\Xi_{ij})\right) + \  \sum_{\substack{1\leq i< j\leq \ell \\ g_k^\ast\omega_{ij}=  0.}}\left(   \# g_k^{-1}(\Xi_{ij})\right) \Big{)}   \nonumber \\ 
&\leq &    3 (\deg \pi)^2 \ell^2 \cdot \mathrm{genus}(C)+  \frac{\ell^2 N_0}{(\ell+1)^3}   \deg g_k^\ast \pi^\ast M  \nonumber \\ 
&=& 3 (\deg \pi)^2 \ell^2 \cdot \mathrm{genus}(C)+  \frac{N_0}{\ell+1}   \deg g_k^\ast \pi^\ast M. \nonumber
\end{eqnarray*}
By combining this upper bound for the genus of $W_k$   with   (\ref{ineq3}), we obtain that 
$$
\deg g_k^\ast(\pi^\ast M)\leq \gamma\cdot  \mathrm{genus}(W_k) \leq   3 \gamma  (\deg \pi)^2 \ell^2 \cdot \mathrm{genus}(C)+  \frac{N_0}{\ell+1}   \deg g_k^\ast \pi^\ast M
$$
 This implies that 
 $$
\deg g_k^\ast(\pi^\ast M)\leq   3 \gamma  N_0(\deg \pi)^2 (\ell+1)^3 \cdot \mathrm{genus}(C).
$$
In particular, we obtain that 

\begin{eqnarray}\label{ineq}
\deg f_k^\ast M &=& \frac{1}{\deg \pi} \deg g_k^\ast \pi^\ast M \leq  3 \gamma N_0  (\deg \pi) (\ell+1)^3 \cdot \mathrm{genus}(C).
\end{eqnarray} 
 Since the RHS of (\ref{ineq}) is independent of $k$, it follows that $(f_k:(C,c_0)\to (X,x))_{k=1}^\infty$ is an infinite sequence of pairwise distinct morphisms of bounded degree. In particular,  it follows from   Mori's bend-and-break (see \cite[Proposition~3.1]{DebarreBook}) that there is a rational curve in $X$ containing $x$. Since $x\not\in \Delta$, this  contradicts   the fact that all rational curves of $X$ are contained in $\Delta$.
 
We conclude that $X$ is not geometrically-special.
\end{proof}

\begin{proposition}[The analogue of 3.1 in \cite{Yam10}]\label{prop:3.1}
Let $X$ be a smooth projective connected variety over $\CC$, and let $K$ be a finite extension of $\QQ_p$. Let $G$ be an almost simple algebraic group   over $K$. Assume that there exists a  \textbf{$p$-unbounded} representation $\rho:\pi_1(X)\to G(K)$ whose image is Zariski-dense in $G$. Then $X$ is not geometrically-special over $\CC$.
\end{proposition}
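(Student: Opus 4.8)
The plan is to reduce to the case of a \emph{big} $p$-unbounded representation, which is exactly Theorem~\ref{prop:3.1a}, by means of the theory of Shafarevich maps, arguing by induction on $n := \dim X$. The base case $n \leq 1$ is easy: if $n = 1$, a representation of $\pi_1(X)$ with infinite Zariski-dense image in the non-commutative group $G$ forces $\pi_1(X)$ to be non-abelian, hence $\mathrm{genus}(X) \geq 2$, and such a curve is not geometrically-special. So I would assume the proposition in all dimensions $< n$, take $\dim X = n \geq 2$, and argue by contradiction: suppose $X$ is geometrically-special.

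First I would record the elementary facts that $G(K)$ is infinite (being the group of $K$-points of a positive-dimensional variety over the infinite field $K$), and therefore that $\rho(\pi_1(X))$ is infinite — a finite subgroup of $G(K)$ is compact, hence $p$-bounded — and Zariski-dense in $G$ by hypothesis. If $\rho$ is big, then Theorem~\ref{prop:3.1a} immediately yields that $X$ is not geometrically-special, a contradiction; so I may and do assume $\rho$ is not big.

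Next I would invoke the Shafarevich map $\mathrm{sh}_\rho\colon X \to \mathrm{Sh}_\rho(X)$ attached to $\rho$ (Kollár and Campana in general, and Eyssidieux and Eyssidieux--Katzarkov--Pantev--Ramachandran for reductive/linear representations): this is a surjective morphism of normal projective varieties with connected fibres, characterized by the property that a subvariety $Z$ passing through a very general point of $X$ is contracted by $\mathrm{sh}_\rho$ if and only if $\rho$ has finite image on $\mathrm{Im}(\pi_1(\widetilde{Z}) \to \pi_1(X))$. Since $\rho(\pi_1(X))$ is infinite we get $\dim \mathrm{Sh}_\rho(X) \geq 1$, and since $\rho$ is not big we get $\dim \mathrm{Sh}_\rho(X) < n$. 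Then, following Yamanoi, I would use the factorization theorem for non-archimedean reductive representations — whose analytic input is a $\rho$-equivariant pluriharmonic map from the universal cover of $X$ to the Bruhat--Tits building of $G$, which is constant along the fibres of $\mathrm{sh}_\rho$ — to conclude that, after replacing $X$ by a finite étale cover and then by a suitable modification (operations that preserve geometric-specialness by Lemma~\ref{lem:cw_geom_special} and Lemma~\ref{lem:surj} and do not affect the conclusion), one obtains a surjective morphism $f\colon X \to S$ with connected fibres onto a smooth projective connected variety $S$ with $1 \leq \dim S < n$, together with a finite étale cover $S' \to S$ and a $p$-unbounded representation $\sigma\colon \pi_1(S') \to H(K')$ into an almost simple algebraic group $H$ over a finite extension $K'$ of $\QQ_p$ with Zariski-dense image, obtained by descending the building data along $f$ (the multiple fibres of $f$ being absorbed into the finite covers used). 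Since $X$ is geometrically-special and $f$ is surjective, $S$ is geometrically-special (Lemma~\ref{lem:surj}), hence so is the finite étale cover $S'$ (Lemma~\ref{lem:cw_geom_special}); as $\dim S' < n$, the inductive hypothesis applied to the pair $(S',\sigma)$ shows that $S'$ is \emph{not} geometrically-special — a contradiction. Hence $X$ is not geometrically-special, completing the induction.

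The hard part, I expect, is the descent in the non-big case: extracting from the hypothesis "$\rho$ is $p$-unbounded, Zariski-dense into an almost simple group, and not big" an honest lower-dimensional base $S$ \emph{together with} a representation of the same type on a finite étale cover of $S$. This is where the Shafarevich-map machinery and its refinements for reductive representations enter, together with the bookkeeping needed to keep track of the multiple fibres of $f$ and to verify that the descended representation remains $p$-unbounded, Zariski-dense, and valued in an almost simple group. All of this follows Yamanoi's strategy closely; the genuinely new technical input in the whole circle of ideas is Theorem~\ref{prop:3.1a}, not this reduction.
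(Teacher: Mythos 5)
Your proposal is correct in outline and runs on the same engine as the paper's proof — pass to the Shafarevich map attached to $\ker\rho$, factor the representation through (a cover/modification of) its base, and feed the result into Theorem \ref{prop:3.1a} — but the paper organizes this without your induction on dimension and without the big/not-big dichotomy, and it is worth seeing why. The descent you correctly flag as the hard part is precisely Zuo's factorization results (\cite[Lemma~2.2.3]{Zuo99} together with \cite[Proposition~2.2.2]{Zuo99}): after replacing $X$ by a variety $Y\to X$ which is a composition of a proper birational surjective morphism and a finite \'etale cover, the restriction of $\rho$ to $\pi_1(Y)$ factors through $\pi_1(\Sigma)$, where $Y\to\Sigma$ is the Stein factorization of $Y\to X\to \mathrm{Sh}^{H}(X)$ with $H=\ker\rho$, and the induced representation $\rho_\Sigma$ is automatically \emph{big} and Zariski-dense in the same group $G$; it is moreover still $p$-unbounded, since its image contains the image of $\rho|_{\pi_1(Y)}$ and unboundedness passes to finite-index subgroups. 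With bigness delivered for free, one applies Theorem \ref{prop:3.1a} directly to a resolution $\Sigma'$ of $\Sigma$ (whose geometric-specialness follows from Lemmas \ref{lem:surj} and \ref{lem:cw_geom_special}, exactly as in your argument), so no induction, no base case, and no control on $\dim\Sigma$ are needed — in particular one never has to argue that the Shafarevich base has dimension $<n$. Your route would also close up once the descent is made precise, but note two things: asking the descended representation merely to remain $p$-unbounded and Zariski-dense is weaker than what the factorization theorem actually provides, so the inductive scaffolding buys nothing; and your assertion that the $\rho$-equivariant pluriharmonic map to the Bruhat--Tits building is ``constant along the fibres of $\mathrm{sh}_\rho$'' is only the heuristic behind Zuo's lemma — the honest statement requires exactly the finite \'etale cover and modification you allow for, which is why the paper quotes Zuo rather than the building-theoretic picture directly.
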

\begin{proof}  By adapting the arguments in the first paragraph of Yamanoi's proof of \cite[Proposition~3.1]{Yam10}, we will reduce to the case that $\rho$ is big, so that the result follows from Theorem \ref{prop:3.1a}.   

Assume, for a contradiction that $X$ is geometrically-special over $\CC$.
Note that, if $X'$ is a smooth projective connected variety over $\CC$ which is birational to $X$, then $X'$ is geometrically-special (Lemma \ref{lem:surj}). Therefore, 
defining $H:=\ker \rho$, replacing $X$ by a  smooth projective  variety which is birational to $X$ if necessary, by the theory of Shafarevich maps (see \cite[p.~185]{Kollar} or \cite{CampanaShaf}), we have   the $H$-Shafarevich morphism
\[
\mathrm{sh}^H_X:X\to \mathrm{Sh}^H(X).
\]  Let $F$ be a general fiber of  $\mathrm{sh}^H_X$ and let $\pi_1(F)_X$ be the image of the natural morphism of groups $\pi_1(F)\to \pi_1(X)$. Then, by definition of the $H$-Shafarevich map, the image 
\[
\rho(\pi_1(F)_X)\subset G(K)
\] is finite. It now follows from \cite[Lemma~2.2.3]{Zuo99} that there is a smooth projective connected variety $Y$ and a morphism $Y\to X$ which is the composition of a proper birational surjective morphism and a finite \'etale morphism such that, if $Y\to \Sigma$ denotes the Stein factorization of the composed morphism $Y\to X\to \mathrm{Sh}^H(X)$, then there exists a representation 
\[
\rho_{\Sigma}: \pi_1(\Sigma)\to G(K)
\] such that $\pi_1(Y)\to G(K)$ factors over $\rho_{\Sigma}$.  By construction (see \cite[Proposition~2.2.2]{Zuo99}), the representation $\rho_{\Sigma}$ is big and has Zariski dense image in $G$. Let $\Sigma'$ be a resolution of singularities of $\Sigma$.  
Note  that the variety $Y$ is geometrically-special (Lemma \ref{lem:surj} and Lemma \ref{lem:cw_geom_special}). In particular, since $Y$ dominates $\Sigma$, it follows that $\Sigma$ and $\Sigma'$ are geometrically-special (Lemma \ref{lem:surj}). Thus, $\Sigma'$ is a smooth projective connected geometrically-special variety over $\CC$ whose fundamental group has a big $p$-unbounded representation $\pi_1(X)\to G(K)$. This contradicts Theorem \ref{prop:3.1a}.
\end{proof}

\subsection{Period maps} To prove that every linear quotient of the fundamental group of a geometrically-special variety is virtually abelian, we will use finiteness results for pointed maps to period domains. To state the necessary results, we recall the definition of a period map following Schmid  \cite[Section 3]{Schmid} (see also \cite[Section~4]{JLitt}). 

Let $H$ be a finitely-generated free $\mathbb{Z}$-module, $k$ an integer, and $\{h^{p,k-p}\}$ a collection of non-negative integers with $h^{p,k-p}=h^{k-p, p}$ for all $p$, $$\sum_p h^{p, k-p}=\text{rk}_{\mathbb{Z}} H.$$ Let $\hat{\mathscr{F}}$ be the flag variety parametrizing decreasing, exhaustive, separated filtrations of $H_\mathbb{C}$, $(F^\bullet)$, with $\dim F^p= \sum_{i\geq p} h^{i, k-i}$.
Let $\mathscr{F}\subset \hat{\mathscr{F}}$ be the analytic open subset of $\hat{\mathscr{F}}$ parametrizing those filtrations corresponding to $\mathbb{Z}$-Hodge structures of weight $k$, i.e.~those filtrations with $$H_\mathbb{C}=F^p+\overline{F^{k-p+1}}$$ for all $p$.
Now suppose $q$ is a non-degenerate bilinear form on $H_{\mathbb{Q}}$, symmetric if $k$ is even and skew-symmetric if $k$ is odd.  Let $D\subset \mathscr{F}$ be the locally closed analytic subset of $\mathscr{F}$ consisting of filtrations corresponding to polarized Hodge structures (relative to the polarization $q$), i.e.~the set of filtrations $(F^\bullet)$ in $\mathscr{F}$ with $$q_{\mathbb{C}}(F^p, F^{k-p+1})=0 \text{ for all $p$}$$ and $$q_{\mathbb{C}}(Cv, \bar v)>0$$ for all nonzero $v\in H_{\mathbb{C}}$, where $C$ is the linear operator defined by $C(v)=i^{p-q}v$ for $$v\in H^{p,q}:= F^p\cap \overline{F^q}.$$

Let $G=O(q)$ be the orthogonal group of $q$; it is a $\mathbb{Q}$-algebraic group. Write $G_{\mathbb{Z}}=G(\mathbb{Q})\cap \mathrm{GL}(H)$, and let $\Gamma\subset G_{\mathbb{Z}}$ be a finite index subgroup.

 If $X$ is smooth, then we say that a holomorphic map $X^{\an}\to \Gamma\backslash D$ is a \emph{period map on $X$} if it is locally liftable and horizontal (i.e., satisfies Griffiths transversality). More generally, for $X$  a possibly singular variety, a holomorphic map $X^{\an}\to \Gamma\backslash D$ is a \emph{period map on $X$ } if there is a desingularization $\tilde{X}\to X$ such that the composed morphism $\tilde{X}^{\an}\to X^{\an}\to \Gamma\backslash D$ is a period map on $\tilde{X}$.   
We will make use of the following finiteness property for varieties with a quasi-finite period map which is essentially    a reinterpretation of Deligne's finiteness result for monodromy representations.

\begin{lemma}[Deligne, Deligne-Griffiths-Schmid]\label{lem:deligne}
Let $X$ be a  variety over $\CC$. If $X$ admits a period map $X^{\an}\to \Gamma\backslash D$ with finite fibres, then $X$ is geometrically hyperbolic over $\CC$.
\end{lemma}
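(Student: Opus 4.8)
The plan is to reduce the finiteness of $\Hom_\CC((C,c),(X,x))$ to a statement about monodromy representations, which Deligne's theorem controls. First I would record the geometric input: since $X$ admits a period map $\phi\colon X^{\an}\to \Gamma\backslash D$ with finite fibres, for every smooth connected pointed curve $(C,c)$ and every morphism $f\colon (C,c)\to (X,x)$ the composition $\phi\circ f^{\an}\colon C^{\an}\to \Gamma\backslash D$ is again a period map on $C$ (composing a period map with a morphism of varieties preserves local liftability and Griffiths transversality). Such a period map on the smooth curve $C$ is, up to the finite-index subgroup $\Gamma$, the same data as a polarized variation of Hodge structure on $C$, hence determines a monodromy representation $\pi_1(C^{\an},c)\to \Gamma \subset G_\ZZ$. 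Crucially, this monodromy representation is the pullback along $f_*\colon \pi_1(C,c)\to \pi_1(X,x)$ of the monodromy representation $\pi_1(X^{\an},x)\to \Gamma$ attached to $\phi$ itself; so as $f$ varies, the resulting representations of $\pi_1(C,c)$ all factor through the \emph{fixed} homomorphism $\pi_1(X,x)\to\Gamma$.

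Next I would invoke Deligne's finiteness theorem: the set of conjugacy classes (equivalently, since we fix a basepoint, the set) of representations $\pi_1(C^{\an},c)\to G_\ZZ$ underlying a polarized $\ZZ$-VHS with given Hodge numbers is finite (see \cite{Schmid}; this is Deligne's ``finitude des monodromies''). Combined with the previous paragraph, this shows that the period maps $\phi\circ f^{\an}$, as $f$ ranges over $\Hom_\CC((C,c),(X,x))$, fall into finitely many classes with a common monodromy, hence — by the rigidity of period maps with fixed monodromy (Griffiths, Schmid; a period map on a quasi-projective curve is determined by its monodromy together with, say, a single Hodge filtration, and there are only finitely many possibilities) — take only finitely many values $\psi_1,\dots,\psi_N\colon C^{\an}\to\Gamma\backslash D$.

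Finally I would conclude by a fibre-counting argument. For each $j$, the set of $f\in\Hom_\CC((C,c),(X,x))$ with $\phi\circ f^{\an}=\psi_j$ is, since $\phi$ has finite fibres, a set of morphisms $C\to X$ all of whose images land in the finite set of (possibly positive-dimensional, but in any case proper closed, or rather: finite-to-one preimage) fibres; more precisely $f(C)\subset \phi^{-1}(\psi_j(C))$. Here I must be slightly careful: $\psi_j(C)$ is a curve (or point) in $\Gamma\backslash D$, and $\phi^{-1}$ of it need not be finite. The cleanest fix is to pass to the Stein factorization or to use that $\phi$ being quasi-finite implies $X$ itself is, by a result of Deligne–Griffiths–Schmid / the fact that period domains are Brody hyperbolic for horizontal curves, Kobayashi hyperbolic in the horizontal directions, so that $C\to X$ lifting a fixed $\psi_j$ is itself rigid up to the finitely many lifts; then $\Hom_\CC((C,c),(X,x))$ is finite. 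Taking $Z=\emptyset$ in Definition \ref{def:psgeomhyp2} (equivalently \ref{def:psgeomhyp}), this says precisely that $X$ is geometrically hyperbolic over $\CC$, as required. The main obstacle I anticipate is exactly this last step — promoting ``finitely many period maps downstairs'' to ``finitely many morphisms $C\to X$'' — which requires knowing that a quasi-finite period map forces a Brody/geometric hyperbolicity statement on $X$ in the relevant directions; this is where the Deligne–Griffiths–Schmid input on the hyperbolicity of period domains (along horizontal curves) does the real work, and phrasing it so that it applies to the possibly-singular, possibly-non-proper $X$ needs the reduction-to-a-desingularization built into the definition of period map above.
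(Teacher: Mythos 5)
Your overall route is the one the paper intends: the paper's proof of Lemma \ref{lem:deligne} is a two-line citation of Deligne's finiteness theorem for monodromy representations \cite{DeligneMonodromy} together with the Theorem of the Fixed Part \cite[7.24]{Schmid}, with the detailed argument delegated to \cite[Theorem~5.1]{JLitt}, and your first two steps (pull the period map back along each $f\colon(C,c)\to(X,x)$, use Deligne to confine the induced monodromy representations of $\pi_1(C)$ to finitely many conjugacy classes, then use rigidity/the fixed part to get finitely many composite period maps $\psi_1,\dots,\psi_N$ with $\psi_j(c)=\phi(x)$) are exactly the cited ingredients. (Minor caveat: fixing the basepoint does not remove the conjugacy ambiguity in Deligne's theorem; that ambiguity is precisely what the fixed-part/rigidity step is there to absorb, so you should not say the two are "equivalent".)

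The genuine gap is the last step, which you flag yourself but do not close: passing from "finitely many period maps $\psi_j$ on $C$" to "finitely many morphisms $f\in\Hom_\CC((C,c),(X,x))$". Your proposed fixes do not work as stated. Stein factorization is unavailable because $\Gamma\backslash D$ is only a complex-analytic space and $\phi$ is neither algebraic nor proper; and Kobayashi/Brody hyperbolicity of $D$ in horizontal directions says nothing about how many holomorphic lifts a \emph{fixed} $\psi_j$ admits through the finite-fibred map $\phi$ --- "finitely many lifts" is exactly the assertion to be proved, not an input. What actually closes it (and is the content of \cite[Theorem~5.1]{JLitt}) is an identity-principle count of pointed lifts: a morphism $f$ with $\phi\circ f^{\an}=\psi_j$ and $f(c)=x$ corresponds to a section through $(c,x)$ of the finite-fibred analytic map $C^{\an}\times_{\Gamma\backslash D}X^{\an}\to C^{\an}$; the germ of this fibre product at $(c,x)$ has only finitely many local irreducible branches, and two lifts whose graphs define the same branch agree on an infinite subset of the irreducible curve $C$, hence agree everywhere since the equalizer of two morphisms $C\to X$ is closed. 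Thus each $\psi_j$ admits only finitely many pointed lifts, and $\Hom_\CC((C,c),(X,x))$ is finite, giving geometric hyperbolicity with $Z=\emptyset$. Without an argument of this kind (or an explicit appeal to the cited result), the proof is incomplete at its decisive step.
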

\begin{proof} 
This follows from Deligne's finiteness theorem for monodromy representations \cite{DeligneMonodromy} and the Theorem of the Fixed Part   \cite[7.24]{Schmid}. 
We refer the reader to  \cite[Theorem~5.1]{JLitt} for a detailed proof.
\end{proof}

\subsection{Period domains are far from being special}

In \cite[Section~4]{Yam10}, Yamanoi divides the task of proving his theorem into the rigid case and non-rigid case. To deal with the rigid case, he crucially uses that varieties which admit a non-constant period map are not Brody-special (i.e., do not admit a dense entire curve). The   analogue for geometrically-special varieties of the statement he uses    reads as follows.

 \begin{theorem}\label{thm:period_maps}
 Let $X$ be a variety over $\mathbb{C}$. If $X$ admits a non-constant period map, then $X$ is not geometrically-special over $\CC$.
 \end{theorem}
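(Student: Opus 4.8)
The plan is to reduce to the Albanese-type structure results already at our disposal together with the recent algebraicity theorem of Bakker--Brunebarbe--Tsimerman for period maps, and then to derive a contradiction from geometric-specialness using Yamanoi's hyperbolicity of subvarieties of abelian varieties (Theorem \ref{thm:V_is_mod}) via the characterization of geometrically-special closed subvarieties of abelian varieties (Theorem \ref{thm:ab_var}).

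First I would reduce to the case where $X$ is smooth projective and the period map is quasi-finite onto its image. Indeed, suppose $X$ admits a non-constant period map $X^{\an}\to \Gamma\backslash D$. Replacing $X$ by a smooth projective compactification of a resolution (using Lemma \ref{lem:surj}, which says geometric-specialness passes to dominated proper varieties and is a birational invariant of proper varieties, together with Lemma \ref{lem:cw_geom_special}) we may assume $X$ is smooth projective. By the theory of Stein factorization for the period map --- here is where I invoke \cite{BakkerBrunebarbeTsimerman}: the image of the period map is quasi-projective and the period map factors as $X^{\an}\to Y^{\an}\to (\Gamma\backslash D)^{\an}$ where $Y$ is a (quasi-projective) variety, $X\to Y$ is a dominant morphism of algebraic varieties with connected fibres, and $Y^{\an}\to \Gamma\backslash D$ is a period map with finite fibres. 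Since $X$ is geometrically-special, $Y$ is geometrically-special by Lemma \ref{lem:surj}. Since the period map is non-constant, $Y$ is positive-dimensional. By Lemma \ref{lem:deligne}, $Y$ is geometrically hyperbolic over $\CC$; a fortiori, any smooth projective model (or compactification) $\overline{Y}$ containing $Y$ as a dense open is such that $\overline Y$ is pseudo-geometrically hyperbolic (with exceptional locus $\overline Y\setminus Y$ together with any other locus needed). This is the one point that needs a little care: geometric hyperbolicity of the open $Y$ gives pseudo-geometric hyperbolicity of a projective model, essentially because maps from a pointed curve $(C,c)$ with image meeting $Y$ and not contained in the boundary are controlled by the finiteness on $Y$ --- one argues exactly as in the standard ``modulo a closed subset'' bookkeeping used elsewhere in the paper.

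Now I would invoke Corollary \ref{cor:image_is_not_psgeom}: $X$ is geometrically-special and dominates the positive-dimensional variety $\overline Y$, hence $\overline Y$ is \emph{not} pseudo-geometrically hyperbolic. This contradicts the previous paragraph, and the theorem follows. An alternative route avoiding the ``open-to-projective'' passage: since $Y$ is geometrically hyperbolic over $\CC$ it is certainly not geometrically-special (a positive-dimensional geometrically-special variety contains a covering set of pointed curves through a point, violating finiteness of $\Hom_\CC((C,c),(Y,y))$); but $Y$ is dominated by the geometrically-special variety $X$, so $Y$ is geometrically-special by Lemma \ref{lem:surj0} --- again a contradiction. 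This second version is cleaner because it sidesteps compactifications entirely, working directly with the quasi-projective $Y$ and the quasi-projective formulation of geometric hyperbolicity implicit in Definition \ref{def:psgeomhyp} (which allows $C$ to be a smooth connected pointed curve, not necessarily projective).

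The main obstacle I anticipate is the precise packaging of the Bakker--Brunebarbe--Tsimerman result: one needs that a period map on a smooth quasi-projective (or projective) variety factors algebraically through a quasi-projective variety $Y$ with \emph{connected fibres} and \emph{finite} period map on $Y$, so that Lemma \ref{lem:deligne} applies to $Y$. This factorization --- the ``algebraicity of the period map together with its Stein/Remmert factorization'' --- is exactly the content guaranteed by their theorem (the image is quasi-projective and the map is definable, hence has a well-behaved algebraic Stein factorization), but stating it in the exact form needed, and checking that the factor $Y^{\an}\to\Gamma\backslash D$ is still a period map (locally liftable, horizontal), is the only non-formal input. Everything after that is a two-line application of Lemma \ref{lem:surj}/\ref{lem:surj0} and Corollary \ref{cor:image_is_not_psgeom}.
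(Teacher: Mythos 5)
Your second (``alternative'') route is essentially the paper's own proof: by Bakker--Brunebarbe--Tsimerman the period map factors through a positive-dimensional quasi-projective variety $Y$ that is dominated by $X$ and carries a quasi-finite period map; Lemma \ref{lem:deligne} makes $Y$ geometrically hyperbolic, while Lemma \ref{lem:surj0} makes $Y$ geometrically-special, a contradiction. Two caveats about the rest of your write-up. First, the opening reduction ``replace $X$ by a smooth projective compactification of a resolution'' is unnecessary and, as stated, invalid: a period map on a quasi-projective variety need not extend across the boundary of a compactification, so the compactified variety may carry no period map at all. Fortunately your argument never needs $X$ projective --- the algebraicity theorem applies to period maps on arbitrary varieties, and the paper's definition of a period map on a possibly singular variety already goes through a desingularization. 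Second, in your first route the claim that geometric hyperbolicity of the open $Y$ yields pseudo-geometric hyperbolicity of a projective compactification $\overline{Y}$ modulo the boundary is not routine bookkeeping: a pointed map $(C,c)\to(\overline{Y},y)$ with $y\in Y$ may still have image meeting the boundary, and Deligne's finiteness theorem says nothing about such maps (note also that Corollary \ref{cor:image_is_not_psgeom} is stated for proper varieties). So the version to keep is the direct quasi-projective argument, which is exactly what the paper does.
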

 \begin{proof} Let $p:X^{\an}\to \Gamma\backslash D$ be a non-constant period map.
  By Bakker-Brunebarbe-Tsimerman's resolution of Griffiths's conjecture \cite{BakkerBrunebarbeTsimerman}, the analytic closure of the image  of $p$ is the analytification $Y^{\an}$ of a quasi-projective variety $Y$ over $\CC$, and the induced dominant morphism $X^{\an}\to Y^{\an}$ is the analytification of a (non-constant dominant) morphism $f:X\to Y$. Note that $\dim Y >0$. Since $Y$ admits a quasi-finite (even injective) period map, it is geometrically hyperbolic (Lemma \ref{lem:deligne}). Since $X$ dominates the positive-dimensional geometrically hyperbolic variety $Y$, by Corollary \ref{cor:image_is_not_psgeom}, we conclude that $X$ is not geometrically-special  over $\CC$.
  \end{proof}

\subsection{The rigid case} We now deal with the case    of  a rigid representation $\rho:\pi_1(X) \to G(\CC)$.

 \begin{proposition}[The analogue of 4.1 in \cite{Yam10}]\label{prop:rigid}
 Let $X$ be a smooth projective connected variety over $\CC$, and let $G$ be an almost simple algebraic group over $\CC$. Assume that there exists a \textbf{rigid} representation $\rho:\pi_1(X)\to G(\CC)$ whose image $\rho(\pi_1(X))$ is Zariski-dense in $G$. Then $X$ is not geometrically-special over $\CC$.
 \end{proposition}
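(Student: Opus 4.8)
The plan is to follow Yamanoi's treatment of the rigid case in \cite[\S4]{Yam10}, substituting his input that quotients of period domains carry no dense entire curve by our Theorem \ref{thm:period_maps}, and substituting his Nevanlinna-theoretic handling of $p$-adically unbounded representations by Proposition \ref{prop:3.1}. First I would normalize $\rho$. Since $\rho$ is rigid and its Zariski closure is the almost simple group $G$, the point it defines in the character variety $\Hom(\pi_1(X),G)/\!/G$ is isolated, hence defined over $\overline{\QQ}$; choosing a representative of the corresponding orbit (which behaves well because Zariski-density in an almost simple group makes $\rho$ irreducible) and using that $\pi_1(X)$ is finitely generated, I may assume after conjugation that $\rho$ factors through $\rho\colon\pi_1(X)\to G(K)$ for some number field $K\subset\CC$. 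I then argue by contradiction: \emph{suppose $X$ is geometrically-special over $\CC$}.

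Next I would pin down the behaviour of $\rho$ at the finite places of $K$. For each finite place $v$ of $K$, of residue characteristic $p$, the representation $\rho_v\colon\pi_1(X)\to G(K_v)$ still has Zariski-dense image; if $\rho_v$ were $p$-unbounded, Proposition \ref{prop:3.1} would force $X$ not to be geometrically-special, contrary to our standing assumption. Hence $\rho_v$ is $p$-bounded for \emph{every} finite place $v$. A finitely generated subgroup of $G(K)$ bounded at all finite places is, up to conjugation, contained in an arithmetic subgroup of $G$ (after restriction of scalars from $K$ to $\QQ$ one lands in the $\ZZ$-points of the corresponding $\QQ$-group); in particular $\rho(\pi_1(X))$ is discrete in the group of real points. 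On the other hand, $\rho$ is rigid with semisimple Zariski closure, so by Simpson's theorem its underlying local system carries a polarizable complex variation of Hodge structure; combining this with the integrality just obtained, one produces (possibly after a finite étale cover of $X$, which stays geometrically-special by Lemma \ref{lem:cw_geom_special}) a polarizable $\ZZ$-variation of Hodge structure with monodromy $\rho$, and hence a period map $p\colon X^{\an}\to\Gamma\backslash D$ in the sense of the preceding subsection, with $\Gamma$ arithmetic.

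Now I would conclude. The monodromy $\rho(\pi_1(X))$ is Zariski-dense in $G$, which is almost simple and therefore connected and non-commutative, in particular infinite; and we have just seen it is discrete. A discrete subgroup of a real Lie group which is contained in a compact subgroup is finite, so $\rho(\pi_1(X))$ is \emph{not} contained in the compact stabilizer $V$ of any point of the period domain $D$. Consequently the equivariant lift of the period map to the universal cover is non-constant, i.e.\ $p$ is a non-constant period map on $X$. Theorem \ref{thm:period_maps} then yields that $X$ is not geometrically-special over $\CC$, contradicting our assumption. Therefore $X$ is not geometrically-special, as claimed.

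The step I expect to be the main obstacle is the middle one: passing from "$\rho$ is rigid and bounded at all finite places" to "$\rho$ underlies a polarizable $\ZZ$-variation of Hodge structure" — that is, producing a genuine period map in the sense required by Theorem \ref{thm:period_maps}. This is where the heavy inputs enter (Simpson's theorem that rigid semisimple representations underlie complex variations of Hodge structure, together with integrality so that the monodromy is discrete and the variation can be arranged to carry a $\ZZ$-structure, as in Lemma \ref{lem:deligne}'s circle of ideas); the point of invoking Proposition \ref{prop:3.1} first is precisely to secure, under the assumption that $X$ is geometrically-special, the boundedness at all finite places that makes the monodromy discrete. By contrast, the reduction to a number field, the finiteness argument for $\rho(\pi_1(X))\cap V$, and the final appeal to Theorem \ref{thm:period_maps} are short and formal.
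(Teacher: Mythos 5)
Your proposal is correct and follows essentially the same route as the paper: descend $\rho$ to a number field by rigidity, rule out $p$-unboundedness at any finite place via Proposition \ref{prop:3.1}, use boundedness everywhere to pass to a finite \'etale cover with integral monodromy, invoke Simpson to obtain a period map, observe it is non-constant by Zariski-density of the image, and conclude with Theorem \ref{thm:period_maps} together with Lemma \ref{lem:cw_geom_special}. The only cosmetic difference is at the Simpson step, where the paper (following Yamanoi) records only that $\rho$ occurs as a direct factor of the rational monodromy of the resulting period map rather than that $\rho$ itself is the monodromy of a $\ZZ$-variation, which suffices for non-constancy exactly as in your argument.
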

 \begin{proof}
Since $\rho$ is rigid, it can be defined over some number field $K$. By abuse of notation, we let $\rho:\pi_1(X)\to G(K)$ denote a model over $K$. \  For $p$ a  finite place of $K$, we consider the representation 
\[
\rho_p:\pi_1(X) \to G(K_p).
\] If there exists a finite place $p$ of $K$ such that $\rho_p$ is $p$-unbounded, then Proposition \ref{prop:3.1} implies that $X$ is not geometrically-special. Thus, we may and do assume that, for every finite place $p$ of $K$, the representation $\rho_p$ is $p$-bounded. In this case, as is explained by Yamanoi in the proof of \cite[4.1]{Yam10}, the subgroup $\rho^{-1}(G(\mathcal{O}_K))$ is of finite index in $\pi_1(X)$. Therefore, there is a finite \'etale cover $Y\to X$ such that $\rho(\pi_1(Y)) \subset G(\mathcal{O}_K)$.  Then, as $\rho|_{\pi_1(Y)}:\pi_1(Y)\to G(\mathcal{O}_K)$ is a rigid representation,  by a result of Simpson (see \cite[p.~58]{Simpson}), the variety $Y$ admits a   period map, say $p:Y^{\an}\to \Gamma\backslash D$, whose rational monodromy representation contains $\rho|_{\pi_1(Y)}$ as a direct factor. Since $\rho$ has Zariski dense image in $G$, it follows that the period map $p$ is non-constant. We conclude that $Y$ is not geometrically-special from Theorem \ref{thm:period_maps}.  In particular, as $Y\to X$ is finite \'etale, the variety $X$ is not geometrically-special (Lemma \ref{lem:cw_geom_special}). This concludes the proof.
 \end{proof}

\subsection{The general case}\label{section:43} To deal with the non-rigid case, we begin with the following result proven by Yamanoi which we  state in more generality than necessary.

\begin{proposition}\label{prop:yamanoi_character_variety}   
Let $X$ be a variety over $\CC$, and let $G$ be an almost simple algebraic group over $\CC$. Assume that there exists a non-rigid representation $\rho:\pi_1(X)\to G(\CC)$ whose image $\rho(\pi_1(X))$ is Zariski-dense in $G$. Then, for every prime number $p$, there exists a finite extension $K/\QQ_p$ and  a $p$-unbounded representation $\widetilde{\rho}:\pi_1(X)\to G(K)$ whose image is Zariski-dense in $G$.
\end{proposition}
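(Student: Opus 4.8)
The plan is to adapt Yamanoi's argument \cite[\S4]{Yam10}, which runs through the character variety of $\pi_1(X)$ followed by a $p$-adic specialization. Since an almost simple algebraic group over $\CC$ is the base change of its split $\QQ$-form, I would first fix a faithful embedding $G\hookrightarrow\mathrm{GL}_N$ of $\QQ$-group schemes (and interpret $G(K)$ via this model for $K/\QQ_p$), and form the representation scheme $R:=\Hom(\pi_1(X),G)$ — an affine $\QQ$-scheme of finite type, as $\pi_1(X)$ is finitely generated — with its conjugation $G$-action. The subset $R^{\mathrm{Zd}}\subset R$ of Zariski-dense representations is Zariski open, because the constructible function $\rho\mapsto\dim\overline{\rho(\pi_1(X))}$ is stable under generization (images only shrink under specialization). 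Since the Zariski closure of $\rho(\pi_1(X))$ is the reductive group $G$, every $G$-orbit in $R^{\mathrm{Zd}}$ is closed with finite stabilizer (inside $Z(G)$), so on $R^{\mathrm{Zd}}$ the GIT quotient restricts to a geometric quotient $\pi\colon R^{\mathrm{Zd}}\to M^{\mathrm{Zd}}$, an open $\QQ$-subvariety of $M:=R/\!/ G$.

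Next I would extract a positive-dimensional family from non-rigidity. The point $[\rho]\in R^{\mathrm{Zd}}(\CC)$ has orbit of dimension $\dim G$ (finite centralizer), and $\rho$ non-rigid means this orbit is not open in $R$, so $\dim_{[\rho]}R^{\mathrm{Zd}}=\dim_{[\rho]}R>\dim G$; hence $\dim_{\pi([\rho])}M^{\mathrm{Zd}}\geq 1$. Replacing an irreducible component of $M^{\mathrm{Zd}}_{\CC}$ through $\pi([\rho])$ by its $\QQ$-Galois orbit and then cutting by general hyperplanes defined over $\QQ$, I obtain an irreducible affine curve $C\subset M^{\mathrm{Zd}}$ defined over a number field $F$. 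As $C$ is affine and positive-dimensional, some fixed algebra generator $a$ of $\mathcal{O}(M)$ restricts to a non-constant function on $C$ and so has a pole at some point $P_\infty$ of the smooth projective model $\overline C$; after enlarging $F$ I may assume $P_\infty\in\overline C(F)$, and I fix a place $w\mid p$ of $F$ with completion $K:=F_w$, a finite extension of $\QQ_p$.

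I would then produce the desired representation. By Hensel's lemma, $\overline C(K)$ is $w$-adically dense near the smooth $K$-point $P_\infty$, so there are $q\in C(K)$ with $|a(q)|_w$ as large as we wish; I fix such a $q$ with $|a(q)|_w$ exceeding the constant $B_a$ below. Lifting $q$ along the geometric quotient $\pi$ gives $\tilde\rho\in R^{\mathrm{Zd}}(K')$ for some finite extension $K'/K$ (still finite over $\QQ_p$), and $\tilde\rho$ has Zariski-dense image since it lies in $R^{\mathrm{Zd}}$. To see that $\tilde\rho$ is $p$-unbounded, I would use that $\mathcal{O}(M)$ is generated by trace functions $\rho\mapsto\operatorname{tr}_V(\rho(\gamma))$ ($V$ a finite-dimensional representation of $G$, $\gamma\in\pi_1(X)$), so $a=P\bigl(\operatorname{tr}_{V_1}(\cdot(\gamma_1)),\dots,\operatorname{tr}_{V_m}(\cdot(\gamma_m))\bigr)$ for some polynomial $P$ with integer coefficients. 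If $\tilde\rho$ were $p$-bounded, its image would lie in a bounded subgroup of each $\mathrm{GL}(V_i)(K'')$, hence be conjugate into $\mathrm{GL}(V_i)(\mathcal{O}_{K''})$, forcing $|\operatorname{tr}_{V_i}(\tilde\rho(\gamma))|_w\leq 1$ for all $\gamma$ (the trace is conjugation-invariant) and thus $|a(\tilde\rho)|_w\leq B_a$, with $B_a$ the number of monomials of $P$ — contradicting $|a(\tilde\rho)|_w=|a(q)|_w>B_a$.

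The step I expect to be the main obstacle is the passage from non-rigidity to an honest positive-dimensional family of Zariski-dense representations that, together with its pole point, is defined over a number field admitting a place above the prescribed $p$: this requires the local structure of $R^{\mathrm{Zd}}$ at $[\rho]$ (openness of the Zariski-dense locus, closedness of the relevant orbits, existence of the geometric quotient over $\QQ$) and some care in descending $C$ and $P_\infty$ to a number field. Once this is arranged, the $p$-adic density and lifting are routine, and the uniform bound $B_a$ is clean precisely because the coordinate ring of the character variety is generated by \emph{traces}, which are insensitive to conjugation and hence bounded on bounded subgroups of $\mathrm{GL}_N(K')$ uniformly in the varying $p$-adic field $K'$.
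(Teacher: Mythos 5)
Your overall architecture is the right one: the paper's own ``proof'' is just a pointer to the character-variety argument in the proof of Yamanoi's Lemma~4.1, and your reconstruction (a $\QQ$-model of $G$, the representation scheme and its GIT quotient, a positive-dimensional family of Zariski-dense classes coming from non-rigidity, an affine curve over a number field, $p$-adic points accumulating at a pole of a coordinate function, and a boundedness-of-traces criterion) is exactly that type of argument. The structural steps (closedness of orbits of Zariski-dense representations, finiteness of stabilizers, lifting a $K$-point of the quotient to a representation over a finite extension, Hensel near the smooth point $P_\infty$) are fine, and openness/constructibility of the Zariski-dense locus, while stated breezily, is defensible for $G$ almost simple.

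The genuine gap is the assertion that $\mathcal{O}(M)$ is generated by trace functions $\rho\mapsto\operatorname{tr}_V(\rho(\gamma))$ for an \emph{arbitrary} almost simple $G$. This Procesi-type ``first fundamental theorem'' is known for the classical groups (Procesi, Sikora), but it is not known in general, and for some simple targets it actually fails at the level of separating points: by Larsen's examples of element-conjugate but non-conjugate homomorphisms (e.g.\ into certain orthogonal/spin or projective groups), two distinct closed orbits in $\Hom(\pi_1(X),G)/\!/G$ can have identical trace functions in every representation, so trace functions cannot generate the coordinate ring there. Since this claim is precisely the bridge from ``$|a(q)|_w$ is large'' to ``$\widetilde{\rho}$ is $p$-unbounded'', the proof as written does not cover all almost simple $G$. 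The standard repair: fix a faithful $\QQ$-embedding $G\hookrightarrow \mathrm{SL}_N$; boundedness of a subgroup of $G(K')$ is equivalent to boundedness in $\mathrm{SL}_N(K')$, and for the composed (completely reducible) representation boundedness is detected by $\mathrm{SL}_N$-traces, which \emph{are} uniformly $\leq 1$ on $p$-bounded representations. One then needs the induced map $M_G^{\mathrm{Zd}}\to M_{\mathrm{SL}_N}$ not to contract your curve $C$; this holds because an $\mathrm{SL}_N$-conjugacy $h$ between two Zariski-dense $G$-representations satisfies $hGh^{-1}=G$, so the fibres of $M_G^{\mathrm{Zd}}\to M_{\mathrm{SL}_N}$ are finite (controlled by $N_{\mathrm{SL}_N}(G)/G\cdot Z_{\mathrm{SL}_N}(G)$, which is finite). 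With $a$ chosen among pullbacks of $\mathrm{SL}_N$-trace functions having a pole on $\overline{C}$, the rest of your argument goes through verbatim.
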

\begin{proof} 
This is shown in the proof of \cite[Lemma~4.1]{Yam10}. (The argument only uses properties of character varieties.)
\end{proof}

 Proposition \ref{prop:yamanoi_character_variety}  allows us to deal with non-rigid representations by appealing to our result for $p$-unbounded representations.

 \begin{corollary}[The analogue of 2.1 in \cite{Yam10}]\label{cor:21}
 Let $X$ be a smooth projective connected variety over $\CC$, and let $G$ be an almost simple algebraic group over $\CC$. Assume that there exists a representation $\rho:\pi_1(X)\to G(\CC)$ whose image $\rho(\pi_1(X))$ is Zariski-dense in $G$. Then, the variety $X$ is not geometrically-special over $\CC$.
 \end{corollary}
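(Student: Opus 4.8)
The plan is to argue by a dichotomy according to whether the given Zariski-dense representation $\rho\colon\pi_1(X)\to G(\CC)$ is rigid. Suppose first that $\rho$ is rigid. Then Proposition \ref{prop:rigid} applies verbatim to $X$, $G$, and $\rho$, and yields directly that $X$ is not geometrically-special over $\CC$; nothing further is needed in this case.

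Now suppose that $\rho$ is not rigid. I would fix an arbitrary prime number $p$ and invoke Proposition \ref{prop:yamanoi_character_variety} for $X$, $G$, and $\rho$: this produces a finite extension $K/\QQ_p$ together with a $p$-unbounded representation $\widetilde\rho\colon\pi_1(X)\to G(K)$ whose image is Zariski-dense in $G$, where $G$ is viewed as an almost simple algebraic group over $K$ by base change (almost simplicity and Zariski-density of the image are unaffected by such a base change, both being conditions that can be checked over the algebraic closure). One then applies Proposition \ref{prop:3.1} to $\widetilde\rho$ and concludes that $X$ is not geometrically-special over $\CC$. Combining the two cases proves the corollary.

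I do not expect a serious obstacle in this final step: the genuine content has already been placed in the two preceding results. Proposition \ref{prop:rigid} runs through Simpson's construction of a period map from a rigid integral representation, together with the non-specialness of varieties carrying a non-constant period map (Theorem \ref{thm:period_maps}); Proposition \ref{prop:3.1}, via the theory of Shafarevich maps and Zuo's lemmas, reduces the $p$-unbounded case to the \emph{big} $p$-unbounded case, which is the genuinely new technical input Theorem \ref{prop:3.1a}, itself resting on Zuo's spectral covers and Yamanoi's hyperbolicity theorem for varieties of maximal Albanese dimension (Theorem \ref{thm:yamanoi}). The only point in the present argument requiring a word of care is that the $K$-group and representation furnished by Proposition \ref{prop:yamanoi_character_variety} still have Zariski-dense image in $G$, which is built into the statement of that proposition, so the case split is all that remains to record.
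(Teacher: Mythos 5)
Your proposal is correct and follows essentially the same route as the paper: the paper's proof of this corollary is exactly the rigid/non-rigid dichotomy, invoking Proposition \ref{prop:rigid} in the rigid case and combining Proposition \ref{prop:yamanoi_character_variety} with Proposition \ref{prop:3.1} in the non-rigid case. Your remark about base-changing $G$ to $K$ while preserving almost simplicity and Zariski-density is a reasonable point of care that the paper leaves implicit.
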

 \begin{proof}
If  $\rho$ is rigid, then the result follows from  Proposition \ref{prop:rigid}, so   we may   assume that $\rho$ is non-rigid.  Let $p$ be a prime number. Since $\rho$ is a non-rigid representation whose image is Zariski-dense in $G$, it follows from Proposition \ref{prop:yamanoi_character_variety} that there is a $p$-adic field $K$ and a $p$-unbounded representation $\widetilde{\rho}:\pi_1(X)\to G(K)$ whose image is Zariski-dense in $G$. Then, by   Proposition \ref{prop:3.1}, the variety $X$ is not geometrically-special over $\CC$.  
 \end{proof} 
 
 \subsection{Linear quotients of fundamental groups}
 
Corollary \ref{cor:21} says that the fundamental group $\pi_1(X)$ of a smooth projective connected geometrically-special variety $X$ over $\CC$ does not admit a Zariski dense representation into an almost simple algebraic group. We now combine this result with the structure result for Albanese maps of such varieties to  show that linear quotients of $\pi_1(X)$ are virtually abelian.
 
 \begin{proof}[Proof of Theorem \ref{thm:pi1_geomhyp}] 
 
 Assume that $X$ is geometrically-special over $\CC$, and let $\rho:\pi_1(X)\to \mathrm{GL}_n(\CC)$ be a representation.   We follow Yamanoi's ``Proposition 2.1 implies Theorem 1.1'' in \cite{Yam10}.  Thus,  let $H$ be the Zariski closure of the image of $\rho$ in $\mathrm{GL}_n(\CC)$.  Since every finite \'etale cover of $X$ is geometrically-special (Lemma \ref{lem:cw_geom_special}), replacing $X$ by a finite \'etale cover, we may and do assume that $H$ is connected. Then, as $H/R(H)$ is an almost direct product of almost simple  algebraic groups, it follows from Corollary \ref{cor:21} that $H$ equals its radical $R(H)$.    	Therefore, the image $\rho(\pi_1(X))$ is a solvable group.
 
 We now apply a theorem of Campana  \cite[Theoreme~2.9]{CampanaEnsemble}. First, note that  the Albanese map of every finite \'etale covering $X'$ of $X$ is surjective (Corollary  \ref{cor:covers_have_surj_alb}). Therefore, it follows  from \cite[Theoreme~2.9]{CampanaEnsemble}   that there exists a finite \'etale morphism $Y\to X$ such that $\pi_1(Y)\to H$ factors over the induced group homomorphism $\pi_1(Y)\to \pi_1(\mathrm{Alb}(Y))$. Since $\pi_1(\mathrm{Alb}(Y))$ is abelian, this implies that $\rho(\pi_1(Y))$ is abelian. We conclude that $\rho(\pi_1(X))$ is virtually abelian.
 \end{proof}

  \begin{proof}[Proof of Theorem \ref{thm:pi1_geomhyp2}]
We follow and adapt Yamanoi's proof of \cite[Corollary~1.3]{Yam10}.

Let $X$ be a geometrically-special smooth projective variety  over $\CC$ such that there is  a big representation
$\rho:\pi_1(X)\to \mathrm{GL}_n(\CC)$.  Then, by Theorem \ref{thm:pi1_geomhyp}, the image of $\rho$ is virtually abelian.  Thus, as every finite \'etale cover of $X$ is geometrically-special (Lemma \ref{lem:cw_geom_special}), replacing $X$ by a suitable finite \'etale cover if necessary, we   may and do assume that the image of  $\pi_1(X)$ is a free abelian group. Since the image of $\rho$ is  a   torsion-free abelian group and $\pi_1(\mathrm{Alb}(X))$ is the torsion-free abelianization of $\pi_1(X)$, it follows that  $\rho$ factors over the homomorphism $\pi_(X)\to \pi_1(\mathrm{Alb}(X))$ induced by the   Albanese map $a_X:X\to \mathrm{Alb}(X)$.  To conclude the proof,   we now show that the Albanese map  $a_X:X\to \mathrm{Alb}(X)$ is birational. 

Since $X$ is geometrically-special, the Albanese map
$a_X$ is surjective with connected fibers (Corollary \ref{cor:geom_special_albanese}). Let
$F$ be a general fiber of $a_X$. Then
$
\rho\left( \mathrm{Im} (\pi_1(F) \to\pi_1(X)) \right)
$ is trivial. In particular, since $\rho$ is big, the fiber $F$ is a point. From this it follows that $a_X$ is birational, as required. 
 \end{proof}

\bibliographystyle{alpha}
\bibliography{orbi}{}
 
\end{document}